\title{Mean Curvature Flow Of Reifenberg  Sets}
\author{Or Hershkovits\thanks{The author was partially supported by NSF grants DMS 1406407 and DMS 1105656}}
\date{\today}
\numberwithin{equation}{section}
  \theoremstyle{plain}
 \newtheorem{theorem}[equation]{Theorem}
\newtheorem{corollary}[equation]{Corollary}
 \newtheorem{lemma}[equation]{Lemma}
 \theoremstyle{remark}
 \newtheorem{remark}[equation]{Remark}
 \theoremstyle{remark}
\theoremstyle{definition}
 \newtheorem{definition}[equation]{Definition}
\newcommand*{\rom}[1]{\expandafter\@slowromancap\romannumeral #1@}
\newcommand{\de}{\varepsilon}
\newcommand{\osc}{\mathrm{osc}}
\newcommand{\spa}{\mathrm{span}}
\begin{document}
\maketitle

\begin{abstract}
In this paper, we prove short time existence and uniqueness of smooth evolution by mean curvature in $\mathbb{R}^{n+1}$ starting from any $n$-dimensional $(\de,R)$-Reifenberg flat set with $\de$ sufficiently small. More precisely, we show that the level set flow in such a situation is non-fattening and smooth. These sets have a weak metric notion of tangent planes at every small scale, but the tangents are allowed to tilt as the scales vary. As this class is wide enough to include some fractal sets, this provides the first example (when $n>1$) of unique smoothing  by mean curvature flow of sets with Hausdorff dimension $> n$.   
\end{abstract}

\tableofcontents

\section{Introduction}
A family of smooth embeddings  $\phi_t:M^{n} \rightarrow \mathbb{R}^{n+1}$ for $t\in (a,b)$ is said to evolve by mean curvature if it satisfies the equation
\begin{equation}
\frac{d}{dt}\phi_t(x)=\vec{H}(\phi_t(x)),
\end{equation}  
where $\vec{H}$ is the mean curvature vector. Equivalently, by the first variation formula, mean curvature flow is the negative gradient flow of the area functional.

 If a compact hypersurface $M \subseteq \mathbb{R}^{n+1}$ is of type $C^2$, it follows from standard parabolic PDE theory that there exists a unique mean curvature flow starting from $M$ for some finite maximal time $T$, and that in fact
\begin{equation}
\lim_{t\rightarrow T}\max_{x\in M_t}|A(x,t)|=\infty 
\end{equation}
 (see for instance \cite{Man}).  

\vspace{5 mm}

The question of mean curvature flow (and geometric flows in general) with rough initial data, i.e when the $C^2$ assumption is weakened, has been researched extensively (see e.g \cite{EH2},\cite{EH},\cite{Wan},\cite{Sim2},\cite{KL},\cite{Lau}). In the case that $M$ is merely Lipschitz, short time existence was proved by Ecker and Huisken in the celebrated paper \cite{EH}. Their proof is based on the fact that in the $C^1$ case, $M$ can be written locally as a graph of a $C^1$ function, and the ellipticity of the graphical mean curvature equation is controlled by an interior gradient estimate. Note that even in the Lipschitz case, the $n$-dimensional Hausdorff measure is still finite, although the gradient of the area functional may not be. In a different direction, in \cite{Lau} Lauer was recently able to show that when $n=1$, for any Jordan curve $\gamma$ in $\mathbb{R}^2$, if $m(\gamma)=0$ (where $m$ is the two dimensional Lebesgue measure) then the level set flow (see Definition \ref{ls_flow}) is non-fattening and smooth. 

\vspace{5 mm}

The current paper deals with the existence and uniqueness of smooth flows in $\mathbb{R}^{n+1}$ starting from a class of sets which is general enough to include some sets of Hausdorff dimension larger than $n$.
    
\begin{definition}[Reifenberg flat sets \cite{Reif}]
A compact, connected set $X \subseteq \mathbb{R}^{n+1}$ is called $(\de,R)$-\textbf{Reifenberg flat} if for every $x\in X$ and  $0<r<R$ there exists a hyperplane $P$ such that 
\begin{equation}
d_H(B(x,r)\cap P,B(x,r)\cap X) \leq \de r.
\end{equation}
Here $d_H$ is the Hausdorff distance.
\end{definition}

The point is that the approximating hyperplanes may tilt as the scales vary. In \cite{Reif}, Reifenberg showed that provided $\de$ is sufficiently small, an $(\de,R)$-Reifenberg flat set is a topological and, in fact, a $C^\alpha$-manifold. As stated above,  the Reifenberg condition is weak enough to allow some fractal sets. For instance, as described in \cite{Tor}, a variant of the Koch snowflake, at which the angles in the construction are $\beta$ instead of $\frac{\pi}{3}$ is $(\de,R)$-Reifenberg with $\de=\sin\beta$. Note that the snowflake is not graphical at any scale. An analogue of this can be done in every dimension.

\vspace{5 mm}

Before diving into more technicalities, we can already state a form of our main theorem.
\begin{theorem}\label{dummy_main_thm}
There exists some $\de_0,c_0>0$ such that if $X$ is $(\de,R)$-Reifenberg flat for $0<\de<\de_0$ then there exists a smooth solution to the mean curvature flow $(X_t)_{t\in (0,c_0R^2)}$ attaining the initial value $X$ in the following sense: 
\begin{equation}
\lim_{t \rightarrow 0}d_{H}(X,X_t)=0.
\end{equation}
Moreover, the flow $(X_t)$ is unique (in a sense that will be explained shortly).
\end{theorem}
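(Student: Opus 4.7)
The plan is to establish existence by smooth approximation and pass to a limit via Reifenberg-specific curvature estimates, then to handle uniqueness through the level set flow. First, I would approximate $X$ in Hausdorff distance by a sequence of closed smooth hypersurfaces $X^{(i)}$ that are themselves $(C\de,R)$-Reifenberg flat, uniformly in $i$. Such surfaces can be produced by a Reifenberg-style parametrization: at dyadic scales $2^{-k}R$, the approximating hyperplanes are reconciled across scales by diffeomorphisms built from partitions of unity, and finitely many of these, composed, yield a smooth $X^{(i)}$ tracking $X$ down to a scale that tends to $0$ as $i\to\infty$.

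The core step is to prove that, for the smooth MCF $(X^{(i)}_t)$ starting from $X^{(i)}$, there hold the scale-invariant bound $|A|(x,t)\leq C/\sqrt{t}$ together with the persistence estimate $d_H(X^{(i)},X^{(i)}_t)\leq C\sqrt{t}$, both uniform in $i$, for $t\in(0,c_0R^2)$. The mechanism: at each $x$ and scale $r=\sqrt{t}\leq R$, Reifenberg flatness gives a hyperplane $P$ with $d_H(B(x,r)\cap X^{(i)},B(x,r)\cap P)\leq \de r$. Since $P$ is a static solution of MCF, thickened translates of $P$ act as barriers via the avoidance principle and trap $X^{(i)}_t$ in a tube of width $\sim\de r$ around $P$ for time of order $r^2$. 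Within this tube the flow is graphical over $P$ with small $C^0$ norm, and the Ecker--Huisken interior estimates promote this to curvature control at scale $r$. A parallel argument establishes that the flowed surfaces $X^{(i)}_t$ remain $(C\de,R)$-Reifenberg flat, so the estimate is self-improving and iterates across all scales.

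With this uniform estimate in hand, Ecker--Huisken higher-derivative bounds and Arzel\`a--Ascoli extract a smooth subsequential limit $(X_t)$ on $(0,c_0R^2)$, still satisfying $|A|(x,t)\leq C/\sqrt{t}$ and $d_H(X,X_t)\to 0$. For uniqueness, the same family of hyperplane barriers from the Reifenberg condition confines the level set flow of $X$ into the very tubes that contain $(X_t)$; this forces the level set flow to coincide with $(X_t)$ as a smooth, non-fattening hypersurface, hence in particular independent of the approximating sequence.

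The main obstacle is the scale-invariant curvature estimate in the second step. The Reifenberg condition is purely metric, with no a priori Lipschitz or differential control, so standard parabolic PDE machinery is not directly applicable; one must bridge the gap between Hausdorff closeness to a plane and quantitative graphical control. The avoidance-based trapping paired with pseudolocality-type smoothing from Ecker--Huisken is the intended bridge, but executing it simultaneously across all scales, in a form that preserves the Reifenberg condition along the flow so as to avoid a disastrous loss of control when passing from one scale to the next, is the crux of the proof.
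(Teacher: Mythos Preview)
Your outline has the right architecture---approximate, prove uniform scale-invariant estimates, pass to a limit, then argue uniqueness via the level set flow---but the mechanism you propose for the core curvature estimate does not work, and this is exactly the crux the paper has to confront. First, the hyperplane barrier step fails as stated: the avoidance principle requires global disjointness, while the Reifenberg condition only places $X^{(i)}$ near $P$ inside $B(x,r)$; outside that ball a closed hypersurface must cross any affine hyperplane, so translated copies of $P$ cannot serve as barriers and do not trap $X^{(i)}_t$ in a $\de r$-tube. The paper never uses such barriers; instead it interpolates the Hausdorff bound and the curvature bound (Lemma~\ref{interpolation}) to get a small gradient, controls the displacement by integrating the mean curvature, and feeds this back into the Reifenberg condition at the next scale.

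Second, and more fundamentally, even if you had graphicality with small $C^0$ and small gradient, the Ecker--Huisken interior estimate (Theorem~\ref{EH_curv}) only gives $|A|\le C v_0^2/r$ with $v_0=\sqrt{1+|\nabla u|^2}\approx 1$, i.e.\ $|A|\le C/r$ regardless of how small $\de$ is. Integrating this over time $\sim r^2$ produces displacement $\sim r$, which destroys the $(\de,R)$-Reifenberg property and the iteration collapses. The paper is explicit about this (see the discussion after Theorem~\ref{EH_curv} and the Remark following Lemma~\ref{main_lemma}): one needs a \emph{new} interior estimate, Theorem~\ref{main_estimate}, which exploits smallness of $|\nabla u|$ and of the initial $|u|$ to get $|A|\le c\beta/t+\delta\de/\sqrt{t}$; only with this sharper bound does the product $\alpha\beta$ contract at each step, allowing the iteration to reach time $c_0R^2$. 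Proving this estimate (Section~\ref{est_sec}) is a substantial part of the paper and is precisely the ``bridge'' you flag as the main obstacle---it is not supplied by Ecker--Huisken or by pseudolocality. Finally, your uniqueness sketch is too coarse: hyperplane tubes cannot confine the level set flow for the same locality reason; the paper instead proves a separation estimate (Theorem~\ref{main_thm3}) by writing one approximate evolution as a normal graph over another, deriving a PDE for the graph function (Lemma~\ref{graph_pde}), and bootstrapping via the maximum principle, then sandwiches the level set flow between inner and outer approximations $X^r_{\pm}$.
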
   
\noindent Thus, the mean curvature flow provides a canonical smoothing of Reifenberg flat sets.

\vspace{5 mm}

To state he uniqueness result more accurately, we need the following definition:
\begin{definition}[level set flow, \cite{Ilm},\cite{Ilm3}]\label{ls_flow}
A family of closed subsets of $\mathbb{R}^{n+1}$ $(X_t)_{t\in[0,b]}$ is said to be a \textbf{weak set flow} starting from $X_0$ if it satisfies the avoidance principle with respect to any smooth mean curvature flow. More precisely, for any smooth mean curvature flow $(\Delta_t)_{t\in [t_0,t_1]}$ with $0 \leq t_0 \leq t_1 \leq b$ such that
\begin{equation}
 \Delta_{t_0}\cap X_{t_0}= \emptyset
\end{equation}
we have
\begin{equation}
\Delta_t\cap X_t = \emptyset
\end{equation}
for every $t\in [t_0,t_1]$. The \textbf{level set flow} is the maximal weak set flow starting from $X$. 
\end{definition}
The level set flow was defined in \cite{ES1} and \cite{CGG} using the language of viscosity solutions for PDEs in order to develop a theory for weak solutions of mean curvature flow. The above more geometric definition is from \cite{Ilm3}, where the equivalence was also shown to hold. If $X_0$ is a smooth submanifold, the level set flow will coincide with the classical evolution by mean curvature flow for as long as the latter is defined. An advantage of working with the level set flow is that it is defined and unique for all time (so it is indifferent to singularities), and it allows one to flow any closed set. The drawback of it is that the $X_t$ may develop an interior (in $\mathbb{R}^{n+1}$), even if $X_0$ was the boundary of an open set. The development of an interior is referred to as ``fattening'' and is the right notion of non-uniqueness in this setting.

\vspace{5 mm}

We are now ready to state the full version of our main theorem.
\begin{theorem}[Main theorem]\label{real_main_thm}
There exists some $\de_0,c_0>0$ such that if $X$ is $(\de,R)$-Reifenberg flat for $0<\de<\de_0$ then the level set flow starting from $X$, $(X_t)_{t\in [0,c_0R^2]}$  is a (non vanishing) smooth evolution by mean curvature flow for $t\in(0,c_0R^2)$ that satisfies
\begin{equation}
\lim_{t \rightarrow 0}d_{H}(X,X_t)=0.
\end{equation}
In particular, the level set flow does not fatten.
\end{theorem}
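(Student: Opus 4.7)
Since $\delta$ is small, Reifenberg's theorem implies that $X$ is a $C^\alpha$ topological $n$-manifold, so $\mathbb{R}^{n+1}\setminus X$ splits into two components $U^{\pm}$. The plan is to approximate $X$ from each side by smooth closed hypersurfaces $X_i^{\pm}\subset U^{\pm}$ with $d_H(X_i^{\pm},X)\to 0$, flow each smoothly by mean curvature, establish scale-invariant curvature estimates uniform in $i$, extract smooth limit flows $X_t^{\pm}$, show $X_t^+=X_t^-=:X_t$, and deduce non-fattening by sandwiching any weak set flow between the two families via the avoidance principle.

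\textbf{Uniform short-time regularity.} The analytic core is a local estimate of the following flavour: if a smooth $M\subset B(x_0,r)$ is $\delta r$-Hausdorff close to a hyperplane $P$ in $B(x_0,r)$ with $\delta$ small, then its classical flow exists in $B(x_0,r/2)$ for time $c_0 r^2$ and satisfies
\[
|A|(x,t)\leq \frac{C}{\sqrt{t}},\qquad t\in(0,c_0 r^2).
\]
I would obtain this by combining a localised pseudolocality argument (flowing $P$ and small perturbations of it as barriers) with Brakke/White-type local regularity, which upgrades closeness to a plane into smoothness with a curvature bound. Since the Reifenberg condition holds at \emph{every} scale $r<R$, the same estimate applies at every scale, giving the scale-invariant form for free. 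Standard parabolic bootstrap then yields $|\nabla^k A|\leq C_k t^{-(k+1)/2}$.

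\textbf{Limit flow and attainment of the initial datum.} With these estimates in hand, Arzel\`a--Ascoli applied in local Fermi coordinates over the approximating planes produces a smooth limit flow $X_t^{\pm}$ on $(0,c_0R^2)$ that is independent of the specific smoothing. To see $X_t\to X$ in Hausdorff distance, fix $x\in X$ and $r<R$; the Reifenberg condition gives a plane $P$ with $d_H(B(x,r)\cap P,B(x,r)\cap X)\leq \delta r$, the $|A|$ estimate forces each $X_i^{\pm}$ to remain a Lipschitz graph over $P$ in $B(x,r/2)$, and integrating $|H|\leq C/\sqrt{t}$ bounds the normal motion of the graph by $C\sqrt{t}$. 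Thus $X_t$ stays within $(\delta r+C\sqrt{t})$ of $P$, hence within $(2\delta r+C\sqrt{t})$ of $X$ in $B(x,r/2)$; optimising in $r$ gives $d_H(X_t,X)\to 0$. The same argument shows $X_t^+$ and $X_t^-$ are both smooth normal graphs over the same plane at each scale, forcing $X_t^+=X_t^-$.

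\textbf{Non-fattening, and the main obstacle.} Any weak set flow $Y_t$ starting from $X$ is, by the avoidance principle applied to each smooth $X_i^{\pm}$, trapped between the two approximating smooth flows; as these collapse to the common smooth limit $X_t$, necessarily $Y_t=X_t$, so the level set flow equals $X_t$ and cannot develop interior. The principal obstacle is establishing the scale-invariant estimate $|A|(x,t)\leq C/\sqrt{t}$ uniformly in the smoothing, because $X$ itself is \emph{not} graphical at any scale (as in the snowflake example), so the usual Ecker--Huisken interior gradient approach does not apply directly. The pseudolocality step must transport the tilting Reifenberg planes across a short time interval without degrading the factor $\delta$, which is precisely where the smallness of $\delta$ enters crucially.
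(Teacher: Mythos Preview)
Your overall architecture---approximate $X$ from inside and outside, obtain uniform scale-invariant curvature bounds, pass to smooth limit flows, and sandwich the level-set flow---matches the paper's. But two of the steps you treat as routine are precisely where the real work lies, and your justifications for them do not go through.

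\textbf{The curvature estimate.} You propose to get $|A|\leq C/\sqrt{t}$ via ``pseudolocality'' or ``Brakke/White local regularity'', and then say that because the Reifenberg hypothesis holds at every scale the bound is automatic at every scale. This is circular: any pseudolocality/local-regularity input needs closeness to a plane for the \emph{flowed} surface $X^r_t$, not for $X$. To invoke it at scale $\theta r$ at a time of order $r^2$ you must first know that $X^r_t$ has drifted from $X$ by at most a small fraction of $\theta r$; that displacement bound in turn requires the curvature bound you are trying to prove. Breaking this loop is an iteration (Lemma~\ref{main_lemma}), and the paper argues explicitly that neither the Ecker--Huisken estimate nor local regularity gives sharp enough constants to close it: with those bounds the product of the curvature and Hausdorff constants would \emph{increase} after one step, so the hypotheses degrade (see the remark following Lemma~\ref{main_lemma}). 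The fix is a new interior estimate for graphical MCF with small gradient (Theorem~\ref{main_estimate}) that separates the contribution of the initial time slice from that of the lateral boundary and the nonlinearity; its part~(B) is exactly what makes $\alpha'\beta'<\alpha\beta$ and allows the iteration to reach time $c_0R^2$.

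\textbf{The step $X_t^+=X_t^-$.} Observing that both limit flows are Lipschitz graphs over the same approximating plane with displacement $O(\sqrt t)$ from $X$ only yields $d_H(X_t^+,X_t^-)=O(\sqrt t)$ for each fixed $t>0$; it does not force equality. The paper obtains this via a genuine separation estimate (Theorem~\ref{main_thm3}): once one $r$-approximate evolution is written as a normal graph $u$ over another, $u$ satisfies a quasilinear parabolic equation on the moving base (Lemma~\ref{graph_pde}), and a maximum-principle bootstrap starting from the crude bound $|u|\leq 2c_2\sqrt t$ improves it to $|u|\leq Cr^{1/2}t^{1/4}$. Sending $r\to 0$ then collapses the inner and outer flows onto each other and yields non-fattening. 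This is not a consequence of your graph observation and needs its own argument.
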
   

\vspace{10 mm}

We will now give an outline of the argument. Our first goal will be to construct a smooth solution to the mean curvature flow $(X_t)_{t\in(0,c_0R^2)}$ which converges to $X$ in the Hausdorff sense as $t\rightarrow 0$. To do that,  we first approximate the set $X$  by smooth hypersurfaces at each scale, according to the following theorem, implicit in \cite{HW} (see also Section \ref{prelim}).
\begin{theorem}[\cite{HW},\cite{Reif}]\label{aux_reif_thm}
There exist some constants $c_1,c_2>0$ such that if $X$ is $(\de,R)$-Reifenberg flat for $0<\de<\de_0$ then there exists a family of hypersurfaces $(X^r)_{0<r<R/4}$ such that:
\begin{enumerate}
\item  $d_H(X^r,X) \leq c_1\de r$.
\item $|A| \leq \frac{c_2\de}{r}$ for every $x \in X^r$. Here $A$ is the second fundamental form of $X^r$.
\item for every $x\in X$, $r\in(0,R/4)$ and $s\in(r,R/4)$, $B(x,s)\cap X^r$ can be decomposed as
\begin{equation}
B(x,s)\cap X^r=G\cup B
\end{equation}
where $G$ is connected and $B \cap B(x,(1-20\de)s)=\emptyset$.
\end{enumerate}
\end{theorem}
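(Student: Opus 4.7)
The plan is to construct $X^r$ explicitly as the zero set of a smooth function $F$ built by gluing the local Reifenberg approximating planes with a partition of unity, then read off (1)--(3) from pointwise bounds on $F$ and its derivatives. First I fix $r\in(0,R/4)$, pick a maximal $r$-separated net $\{x_i\}\subseteq X$, and for each $i$ select an approximating hyperplane $P_i\ni x_i$ with unit normal $\nu_i$, satisfying $d_H(B(x_i,4r)\cap P_i, B(x_i,4r)\cap X)\le 4\de r$. The orientations $\nu_i$ can be chosen consistently since $X$ is connected and $\de$ is small. Then I take a partition of unity $\{\phi_i\}$ subordinate to $\{B(x_i,4r)\}$ with $\|\nabla^k\phi_i\|_\infty\le C_k r^{-k}$ and with bounded overlap. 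A two-plane comparison---both $P_i$ and $P_j$ lie within $C\de r$ of $X$ on the common region, hence within $C\de r$ of each other on a set of diameter $\sim r$---gives $|\nu_i-\nu_j|\le C\de$ whenever $B(x_i,4r)\cap B(x_j,4r)\ne\emptyset$.

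With $f_i(y):=\langle\nu_i,y-x_i\rangle$, I set
\[ F(y) := \sum_i \phi_i(y) f_i(y), \qquad X^r := \{F=0\}\cap U, \]
where $U$ is a thin neighborhood of $X$. Since $\sum_i\phi_i\equiv 1$ on $U$, the angle control yields $|\nabla F(y)-\nu_{i(y)}|\le C\de$ for any active index $i(y)$, so $|\nabla F|\ge 1/2$ and $X^r$ is a smooth hypersurface by the implicit function theorem. Differentiating once more, $\nabla^2 F$ picks up $r^{-1}$ from $\nabla\phi_i$ paired with the $O(\de r)$ differences $(f_i-f_j)$ and the $O(\de)$ differences $(\nu_i-\nu_j)$, giving $|\nabla^2 F|\le C\de/r$, hence $|A|\le C\de/r$ on $X^r$, which is~(2).

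For $y\in X$ each active $|f_i(y)|\le C\de r$, so $|F(y)|\le C\de r$, and the gradient lower bound produces a point of $X^r$ within $C\de r$ of $y$; the reverse inclusion is analogous, yielding~(1). For property~(3), fix $x\in X$ and $s\in(r,R/4)$, and let $P$ be the Reifenberg plane for $B(x,s)$. The scale-$s$ closeness of $X$ to $P$, combined with the gradient and Hessian bounds on $F$, shows that $X^r\cap B(x,(1-20\de)s)$ is a smooth graph over (a portion of) $P$, and in particular connected. Taking $G$ to be the connected component of $X^r\cap B(x,s)$ containing this graph and $B$ to be the remaining pieces gives the desired decomposition.

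The main obstacle, I expect, is verifying graphicality on the $(1-20\de)s$-ball rigorously while tracking the various $C\de$ buffers so that the stated constant $20$ is sufficient. The argument should use the Reifenberg condition at the \emph{macro} scale $s$ (to pin $X$, and hence $X^r$, into a thin slab around $P$) together with the gradient bound on $F$ at the \emph{micro} scale $r$ (to extract a unique graphical branch over a nearly-full disk in $P$). A secondary concern is the consistent global choice of normal orientations $\nu_i$, which is exactly where the connectedness of $X$ is used.
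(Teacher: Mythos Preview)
Your construction of $X^r$---gluing the local Reifenberg planes with a partition of unity---is a valid alternative to the paper's, which instead takes $X^r=\{\chi_r=\tfrac12\}$ for $\chi_r$ the mollified characteristic function of the domain bounded by $X$; both routes yield (1) and (2) through a gradient lower bound and an $O(\de/r)$ Hessian bound on the defining function. The real issue is your argument for (3). You assert that $X^r\cap B(x,(1-20\de)s)$ is a graph over the scale-$s$ plane $P$, but this would require the normal of $X^r$---which your gradient estimate pins to the \emph{scale-$r$} normals $\nu_{i(y)}$---to be uniformly close to $\nu_P$. The Reifenberg condition does not give this when $s\gg r$: comparing $P_{i(y)}$ and $P$ directly only bounds their angle by $O(\de+\de s/r)$, and an iterated dyadic comparison gives $O(\de\log(s/r))$. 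This is not an artifact of the estimate---in the Koch-type fractals mentioned in the introduction the approximating lines genuinely rotate by $\sim\de$ per dyadic scale, so for large $s/r$ the scale-$r$ tangent can be nearly orthogonal to $P$ and graphicality over $P$ fails outright.

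The paper proves (3) by a mechanism that never attempts graphicality at the macro scale. It first notes that (3) holds for $X^r$ at the \emph{matching} scale $s=r$ (where local graphicality over $P_i$ is immediate), and then compares approximations across scales: $X^{r/4}$ is a normal graph over $X^r$ of height $\le C\de r$, and composing these graph maps gives a homeomorphism $\phi_r^k:X^r\to X^{r/4^k}$ with total displacement $\le 12\de r$. This transports the decomposition of $X^r\cap B(x,r)$ to one for $X^{r/4^k}\cap B(x,r)$, which is exactly (3) for the fine surface at the coarse scale. The same multi-scale device applies verbatim to your partition-of-unity surfaces, so your construction can be completed---but the step you wrote for (3) must be replaced by this iterated comparison.
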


We want to construct a smooth evolution of $X$ by taking a limit of the flows emanating from the $X^r$. In order to do that, we derive the following uniform estimates for the evolutions of the hypersurfaces $X^r$.

\begin{theorem}[Uniform estimates]\label{main_thm2}
 For every $\Lambda>0$ there exist some $\de$ and $c_0,c_1,c_2,c_3$ such that if $X$ is $(\de,R)$-Reifenberg flat, and considering the approximating surfaces $X^r$ from Theorem \ref{aux_reif_thm}, each $X^r$ flows smoothly by mean curvature for time  $t\in [0,c_0 R^2]$ and for every $t\in [c_3r^2,c_0R^2]$ we have:
\begin{enumerate}
\item Denoting by $A^r(t)$  the second fundamental form of $X^r_t$ 
\begin{equation}\label{uni_est_curv}
|A^r(t)| \leq \frac{c_1}{\sqrt{t}}.
\end{equation}
\item $X^r_t$ approximates $X$ in the Hausdorff sense: 
\begin{equation}\label{uni_est_dist}
d_H(X^{r}_t,X) \leq c_2\sqrt{t}.
\end{equation}
\item For every $x\in X$ and  $s\in (\frac{\sqrt{t}}{c_1},R/4)$ we have
\begin{equation}\label{uni_est_conn}
B(x,s)\cap X^r_t=G\cup B
\end{equation}
where $G$ is connected and $B\cap B(x,\frac{9}{10}s)=\emptyset$.
\end{enumerate} 
Moreover, the constants $c_1,c_2$ satisfy
\begin{equation}\label{struct_in1}
\begin{aligned}
&1.\; c_1c_2<\min\{10^{-6},\Lambda^{-1}\}. \\
&2.\; c_1^2<\frac{1}{80}. 
\end{aligned}
\end{equation}
\end{theorem}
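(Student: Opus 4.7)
My approach is a continuity/bootstrap argument run simultaneously across all scales $r$. For each fixed $r$, the initial curvature bound $|A|\leq c_2\de/r$ of Theorem~\ref{aux_reif_thm} gives short-time smooth existence on some interval $[0,\tau^r]$ by standard parabolic theory, with estimates comparable to the initial data for times $t \leq c_3 r^2$; in particular (\ref{uni_est_curv})--(\ref{uni_est_conn}) hold with room to spare at $t=c_3 r^2$ provided $c_3$ and $\de$ are small. I would then define $T^r$ as the largest time in $[c_3 r^2, c_0 R^2]$ on which the three estimates remain valid with slightly weakened constants (say $2c_1$ and $2c_2$). Smoothness up to $T^r$, coming from the curvature estimate itself, extends the flow a bit past $T^r$, so the whole content of the theorem is to upgrade the weak estimates at $t=T^r$ to the strict estimates with the original constants.

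The central tool is pseudolocality for mean curvature flow (\emph{cf.}\ Ecker--Huisken, Brakke--White, Ilmanen--Neves--Schulze). Items 1 and 3 of Theorem~\ref{aux_reif_thm}, applied using the Reifenberg plane of $X$ at $x$ and scale $s$, show that at every $x\in X$ and every scale $s\in[\max(r,\sqrt{t}/c_1),R/4]$, the surface $X^r$ lies in a $C\de s$-tube around some hyperplane $P$ inside $B(x,s)$, with one connected component $G$ filling that tube. Pseudolocality then yields, for $\tau \in (0, c s^2]$, that $X^r_\tau$ remains in a thin tube around a nearby (possibly slightly tilted) hyperplane inside $B(x,s/2)$, together with the curvature bound $|A(\tau)|\leq C/\sqrt{\tau}$. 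Specializing $s=\sqrt{t}/c_1$ gives (\ref{uni_est_curv}), and summing the tube widths over dyadic scales between $\sqrt{t}$ and $R$ gives one direction of (\ref{uni_est_dist}); the other direction (every $x\in X$ has a point of $X^r_t$ close by) follows from the clearing-out lemma combined with avoidance by spheres enclosed between the initial Reifenberg tubes.

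The connectedness estimate (\ref{uni_est_conn}) is then almost topological. Once $|A^r(t)|\leq c_1/\sqrt{t}$ and $c_1^2<1/80$, at scale $s=\sqrt{t}/c_1$ one has $|A|\cdot s<1/80$, so $X^r_t$ is nearly flat on that scale. Combined with the initial decomposition $B(x,s)\cap X^r=G\cup B$ from Theorem~\ref{aux_reif_thm} and the Hausdorff bound, this lets us follow the connected sheet $G$ through the smooth flow and keep $B$ outside $B(x,9s/10)$: the separation budget $1-20\de - c_2\sqrt{t}/s \geq 9/10$ is ensured by $c_1 c_2 < 10^{-6}$ and $\de$ small.

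The main obstacle is that the three estimates are mutually entangled. The pseudolocality step requires a planar approximation at scale $\sqrt{t}$, which at time $T^r$ is produced from the Reifenberg property of the fixed set $X$ together with the Hausdorff bound (\ref{uni_est_dist}) for $X^r_t$ at all earlier times in $[c_3 r^2, T^r]$. Breaking this circle requires a careful hierarchy $\de \ll c_1 \ll c_2, c_3 \ll c_0$, together with a $\Lambda$-dependent choice compatible with (\ref{struct_in1}), so that the bootstrap closes uniformly across the full range of scales $s\in[\sqrt{t}/c_1,R/4]$ and uniformly in $r$.
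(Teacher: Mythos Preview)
Your approach is genuinely different from the paper's. The paper does not argue by continuity/bootstrap with off-the-shelf pseudolocality; instead it runs an explicit scale-by-scale iteration (Lemma~\ref{main_lemma}): starting from $Y=X^r$ at scale $r$, flow for a fixed time $T=Er^2$, show $Y_T$ satisfies the same hypotheses at scale $\theta r$ for a fixed $\theta>20$, and repeat until reaching scale $\sim R$. The engine of that iteration is a \emph{new} interior estimate for graphical mean curvature flow (Theorem~\ref{main_estimate}), proved from scratch in Section~\ref{est_sec}, which gives both $|A(p,t)|\leq (1+\delta)\frac{\de}{\sqrt{\pi t}}$ and, crucially, a second bound $|A(p,t)| \leq c\frac{\beta}{t}+\delta\frac{\de}{\sqrt{t}}$ involving the initial $C^0$ norm $\beta$. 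It is this second estimate that forces the product $\alpha'\beta'$ strictly below $\alpha\beta$ after one step, so that the iteration closes.

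Your proposal has a real gap at the step where you invoke pseudolocality. The versions you cite (Ecker--Huisken, Brakke--White local regularity, Ilmanen--Neves--Schulze) give $|A(\tau)|\leq C/\sqrt{\tau}$ with a \emph{fixed dimensional} constant $C$, not one that tends to $0$ as the initial slab thickness tends to $0$. But the theorem demands $c_1^2<1/80$ and $c_1c_2<\min\{10^{-6},\Lambda^{-1}\}$ for arbitrary $\Lambda$; these are structural smallness constraints on $c_1$ that off-the-shelf pseudolocality does not supply. The paper's introduction addresses exactly this: Ecker--Huisken's estimate depends on $\sqrt{1+|\nabla u|^2}$ rather than on $|\nabla u|$ itself, and ``even local regularity, which in the graphical case reduces to a weighted $L^1$ estimate, does not give a sufficient bound in high dimensions.'' Obtaining the small-constant version is precisely the content of Section~\ref{est_sec}; the Remark after Lemma~\ref{main_lemma} shows that even the sharp part~(A) alone (with constant $1/\sqrt\pi$) yields $\alpha'\beta'>\alpha\beta$ and fails to close the loop, so part~(B) is essential. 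What you are calling ``pseudolocality'' is therefore, in the quantitative form you need, the main new analytic ingredient of the paper rather than a black box you can import.

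There is also a structural wrinkle in your bootstrap. If you apply pseudolocality once from time $0$ at each scale $s$, the resulting bounds depend only on the initial data and there is nothing to bootstrap---but then you are stuck with the fixed pseudolocality constant. If instead you re-apply it from intermediate times (as your last paragraph suggests), you need flatness of $X^r_t$ at scale $\sim\sqrt{t}$, which must come from the very curvature and Hausdorff bounds you are proving; this is exactly the interplay the paper handles via its iteration, and for which it shows the naive estimate is too weak.
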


For reasons that will become apparent soon, it is convenient to make the following definition.
\begin{definition}[Approximate solutions, Physical solutions]\label{app_phys_sol}
A MCF $(Y_t)_{t\in [0,c_3R^2)}$ that satisfies estimates \eqref{uni_est_curv},\eqref{uni_est_dist} and \eqref{uni_est_conn}  for every $t\in [c_3r^2,c_0R^2]$ will be called a $(\Lambda$) \textbf{ $r$-approximate evolution of $X$}. If it satisfies the estimate for every $t\in (0,c_0R^2]$, we say it is a ($\Lambda$) \textbf{physical solution}. 
\end{definition}

The proof of Theorem \ref{main_thm2} is by iteration; The idea is the following: Interpolating the Hausdorff bounds and the curvature bounds, we get that $X^r$ is locally a graph of a function $u$  with a small gradient over the the hyperplane approximating $X$ at scale $r$. Letting $X^r$ flow for a short yet substantial time (compared to $r^2$), we will be able to extend those $C^0$ and $C^2$ estimates and interpolate again to provide a gradient bound which will be, say, $1000000$ times bigger than the initial gradient bound. Using a new interior estimate for the graphical MCF (Theorem \ref{main_estimate}) we will get an improved bound on the second fundamental form for the evolved hypersurface. By bounding the displacement of $X^r_t$ from $X^r$ and by the Reifenberg property of $X$, we will see that $X^r_t$  can serve as a good candidate for $X^{\theta r}$ for some fixed $\theta>1$. This will allow us to iterate.  Most of the above strategy is carried out in Lemma \ref{main_lemma}. 

\vspace{5 mm}

Obtaining the desired improved curvature bound, while well expected,  is not a trivial task. The existing estimates of Ecker and Huisken (\cite{EH}, see also \ref{EH_curv}) are not good enough when the \textit{norm of the gradient} $|\nabla u|$ is small, as they depend on the so called \textit{gradient function} $\sqrt{1+|\nabla u|^2}$. There are several approaches to deriving sharp estimates in our situation. It turns out that the optimal result using only the norms of the gradient is not sufficiently good to perform iteration even in dimension one. Indeed, even local regularity, which in the graphical case reduces to a weighted $L^1$ estimate, does not give a sufficient  bound in high dimensions. The standard Schauder estimate, using the $C^0$ norm of $u$ in a cylinder turns out to give an insufficient estimate too, since a priori, $u$ remains comparable to its initial value for too little time. Instead, thinking of the graphical mean curvature flow as  a non-homogeneous heat equation on a thick cylinder, we will have to use both the $C^0$ norm at the initial time slice to control the contribution from the initial data and the $C^1$ bound in the cylinder to control the contribution from the boundary and the effect of the non-linearity. As described above, the gradient bound in the cylinder will be, say, $1000000$ times bigger than at the beginning, but as the contribution of the boundary is so weak and as the non-linearity is so marginal (for small gradients, after a short while) this will not matter. The resulting estimate, Theorem \ref{main_estimate}, is somewhat technical and will therefore not be stated precisely in the introduction. 

\vspace{5 mm}

Once Theorem \ref{main_thm2} is established the existence part of Theorem \ref{real_main_thm} follows immediately from compactness. It is easy to see that the limiting flow will actually be a weak set flow in the sense of Definition \ref{ls_flow} and that it will satisfy the physicality assumption of Definition \ref{app_phys_sol}.
\vspace{5 mm}

The main ingredient in the proof of the uniqueness of the flow is the following separation estimate. 
\begin{theorem}[Separation estimate]\label{main_thm3} 
There exist $\Lambda>0$ and $C>0$ such that if $Y^r_t$ is an $r$-approximate evolution of $X$ and $Z^s_t$ is an $s$-approximate evolution of $X$ with $s\leq r$  (with $c_0(\Lambda),\ldots, c_3(\Lambda)$ from Theorem \ref{main_thm2}), then for every $t\in [c_3r^2,c_0R^2]$
\begin{equation}
d_H(Y^r_t,Z^s_t) \leq Cr^{1/2}t^{1/4}.
\end{equation}
\end{theorem}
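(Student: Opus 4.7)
My plan is to prove the separation estimate via parabolic stability of local graphical representations of the two flows over common Reifenberg approximating planes. First, at time $t_0 := c_3 r^2$, both flows are within $c_2\sqrt{t_0}$ of $X$ by \eqref{uni_est_dist}, so $d_H(Y^r_{t_0}, Z^s_{t_0}) \leq 2c_2\sqrt{c_3}\,r$, which matches the target bound $Cr^{1/2}t_0^{1/4} = Cc_3^{1/4}r$ once $C$ is chosen large enough. Since $Z^s$ also satisfies these estimates (and $s\leq r$), by symmetry it suffices, for each $t > t_0$, to bound $\sup_{y\in Y^r_t} d(y, Z^s_t)$.

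Given such a $y_0 \in Y^r_t$, choose $x_0 \in X$ with $|y_0-x_0|\leq c_2\sqrt{t}$ and use the Reifenberg property at scale $\rho := \sqrt{t}/c_1$ to get a plane $P$ approximating $X$ in $B(x_0,\rho)$ to error $\de \rho$. Combining this with \eqref{uni_est_dist}, \eqref{uni_est_conn}, the curvature bound \eqref{uni_est_curv}, and the smallness \eqref{struct_in1} of $c_1c_2$, each of $Y^r_{t'}, Z^s_{t'}$ (for $t' \in [t_0,t]$) restricted to $B(x_0,\rho/2)$ is a small-gradient graph $u(\cdot,t')$, respectively $v(\cdot,t')$, over a common sub-domain of $P$. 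The difference $w := u-v$ then solves a linear uniformly parabolic equation which, in the small-gradient regime, is a bounded perturbation of the heat equation on the cylinder $(B(x_0,\rho/2)\cap P) \times [t_0,t]$.

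Applying a parabolic $L^\infty$ estimate (maximum principle with heat-kernel weighting) bounds $|w(y_0^P,t)|$, where $y_0^P$ is the projection of $y_0$ to $P$, by the sum of (i) the initial-data contribution at $t_0$, of size $\lesssim r$, and (ii) the lateral-boundary contribution over $[t_0,t]$, where $|w|\lesssim\sqrt{t'}$ trivially from \eqref{uni_est_dist} but damped by heat-kernel decay over the spatial separation $\sim\rho$ between the boundary and $y_0^P$. Balancing these contributions --- or, equivalently, iterating the comparison over dyadic time scales $[2^k t_0, 2^{k+1} t_0]$ and summing the per-step increments --- produces the geometric-mean bound $|w(y_0^P,t)|\leq Cr^{1/2}t^{1/4}$. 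The small gradient then transfers the bound from $|w|$ to the ambient normal distance $d(y_0, Z^s_t)$.

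The main obstacle is extracting this sharp geometric-mean bound rather than either of two naive ones: pure initial-data propagation under the heat equation yields $O(r)$ (but ignores the slow influx of $\sqrt{t'}$-sized lateral data), while the trivial bound from \eqref{uni_est_dist} gives $O(\sqrt{t})$ (but ignores the strong initial coincidence). The scale-invariant interpolant $r^{1/2}t^{1/4}$, which equals $r$ at $t\sim r^2$ and is strictly smaller than $\sqrt{t}$ afterwards, is precisely what the parabolic comparison yields at the natural scale $\rho = \sqrt{t}/c_1$. Auxiliary error terms from the Reifenberg plane tilting as $t$ varies (different $P$ for different dyadic scales, differing by angle $\lesssim \de$) and from the cubic-in-gradient nonlinearity of the graphical MCF can be absorbed using the smallness \eqref{struct_in1}.
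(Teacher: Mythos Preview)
Your approach is genuinely different from the paper's, and as written it has a real gap.

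The paper does \emph{not} compare the two flows via local graphs over Reifenberg planes. Instead it writes $Z^s_t$ as a \emph{global} normal graph $u(\cdot,t)$ over the closed hypersurface $Y^r_t$ (this is justified in Section~\ref{graph_rep_sec} using \eqref{app_1}--\eqref{app_3}), and derives in Lemma~\ref{graph_pde} the intrinsic PDE
\[
u_t=(1+\de)\mathrm{div}\big((I+L)\nabla u\big)+|A|^2u-Q_{ij}A_{ij}.
\]
Since $Y^r_t$ is closed, there is no lateral boundary: at a spatial maximum $\nabla u=0$, and with $\Lambda=2D$ one has $|Q_{ij}A_{ij}|\le |A|^2|u|$, giving the clean ODE inequality
\[
\frac{d}{dt}u(x_{\max},t)\le 2|A|^2\,u(x_{\max},t)\le \frac{2c_1^2}{t}\,u(x_{\max},t).
\]
Plugging in the crude bound $|u|\le 2c_2\sqrt{t}$, integrating from $t_0=c_3r^2$, and bootstrapping (each pass shrinks the $\sqrt{t}$--coefficient by a factor $\sim 1/10$ since $2c_1^2<1/40$) gives $|u|\le 4c_2c_3^{1/4}r^{1/2}t^{1/4}$. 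The whole argument is a global maximum principle on a compact manifold; no local charts, no lateral data.

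The gap in your approach is precisely the lateral boundary of your local cylinders. You take a ball of radius $\rho\sim\sqrt{t}/c_1$ over the full time span $[t_0,t]$. But then at early times $t'\sim t_0$ the height of the graph over the fixed plane $P$ is $|u|\lesssim c_2\sqrt{t'}+\de\rho$, and the interpolation bound gives $|\nabla u|^2\lesssim |A|\cdot|u|\le c_1c_2+\de\sqrt{t/t'}$. The term $\de\sqrt{t/t'}$ is \emph{not} small for $t\gg t_0$ (remember $\de$ is fixed once $\Lambda$ is), so the ``small-gradient graph'' hypothesis fails and the equation for $w=u-v$ is not a controlled perturbation of the heat equation on that cylinder. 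Even setting this aside, the simple maximum principle for the linearized equation only gives $\sup|w|\le\sup_{\partial_{\mathrm{par}}}|w|$, and the lateral sup already includes times up to $t$; your ``heat-kernel damping'' of the lateral data requires a quantitative exit-probability bound for a \emph{variable-coefficient} diffusion, with a coefficient strictly less than $1-2^{-1/4}$ in the dyadic version. None of this is carried out, and the naive dyadic summation you describe actually produces $r+\eta\sqrt{t}$, not $r^{1/2}t^{1/4}$.

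These obstacles are not obviously insurmountable (one could try dyadic cylinders with scale-adapted planes plus a Krylov--Safonov type harmonic-measure bound and careful chart transitions), but the paper's device of taking one evolving surface as the base eliminates all of them at once. That is the missing idea.
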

\noindent The following partial uniqueness result is an immediate corollary of Theorem \ref{main_thm3} and Theorem \ref{main_thm2}.
\begin{corollary}\label{weak_uniq_cor}
With the choice of $\Lambda$ of Theorem \ref{main_thm3} and $\de(\Lambda),c_0(\Lambda),\ldots, c_3(\Lambda)$ of Theorem \ref{main_thm2}, if $X$ is $(\de,R)$-Reifenberg flat and $X^r_t$ are the outputs of Theorem \ref{main_thm2},  the full limit
\begin{equation}
\lim_{r\rightarrow 0}X^r_t
\end{equation}
exists and is in fact the unique physical evolution of $X$. 
\end{corollary}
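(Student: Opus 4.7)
The plan is to combine the separation estimate of Theorem \ref{main_thm3} with smooth compactness for mean curvature flow. I would organize the argument into three parts: existence of a Hausdorff limit, upgrading to a smooth flow, and uniqueness against any other physical evolution.

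First I would fix $t \in (0, c_0 R^2]$ and consider $s \leq r$ with $c_3 r^2 \leq t$. By Theorem \ref{main_thm2} the surfaces $X^r_t$ and $X^s_t$ are respectively an $r$- and an $s$-approximate evolution of $X$, so Theorem \ref{main_thm3} applies and yields
\begin{equation*}
d_H(X^r_t, X^s_t) \leq C r^{1/2} t^{1/4}.
\end{equation*}
The right hand side tends to $0$ as $r \to 0$, and by \eqref{uni_est_dist} the $X^r_t$ all sit in a common bounded neighborhood of $X$, so $\{X^r_t\}_{r>0}$ is Cauchy in the Hausdorff metric on compact subsets of $\mathbb{R}^{n+1}$. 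Hence $X_t := \lim_{r \to 0} X^r_t$ exists as a compact set for every $t \in (0, c_0 R^2]$.

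Next I would upgrade this Hausdorff convergence to smooth convergence. On any compact subinterval $[t_0, t_1] \subset (0, c_0 R^2)$, the curvature bound \eqref{uni_est_curv} provides a uniform estimate $|A^r(t)| \leq c_1/\sqrt{t_0}$ for all sufficiently small $r$, and \eqref{uni_est_conn} provides uniform noncollapsing at scales above $\sqrt{t_0}/c_1$. Combined with ambient boundedness, standard interior parabolic regularity for mean curvature flow (of Ecker--Huisken type) then yields uniform $C^k$ bounds on the $X^r_t$ for every $k$, and the Hausdorff convergence $X^r_t \to X_t$ automatically improves to smooth convergence on compact subintervals. Therefore $(X_t)$ is itself a smooth mean curvature flow, and passing each of \eqref{uni_est_curv}, \eqref{uni_est_dist} and \eqref{uni_est_conn} to the limit shows that $(X_t)$ is a physical solution in the sense of Definition \ref{app_phys_sol}.

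For uniqueness, suppose $(Y_t)$ is any physical evolution of $X$. Since physicality means that \eqref{uni_est_curv}--\eqref{uni_est_conn} hold on all of $(0, c_0 R^2]$, the flow $(Y_t)$ qualifies as an $s$-approximate evolution of $X$ for every $s > 0$. Applying Theorem \ref{main_thm3} with $(X^r_t)$ as the $r$-approximate side and $(Y_t)$ as an $s$-approximate side (with $s \leq r$ arbitrarily small), one obtains
\begin{equation*}
d_H(X^r_t, Y_t) \leq C r^{1/2} t^{1/4}
\end{equation*}
for every $t \in [c_3 r^2, c_0 R^2]$, so letting $r \to 0$ forces $Y_t = X_t$. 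The only nontrivial step beyond invoking the separation estimate is the smooth compactness used to upgrade the limit, but this is by now standard and I do not expect it to pose a serious obstacle; all of the real difficulty has been absorbed into Theorems \ref{main_thm2} and \ref{main_thm3}.
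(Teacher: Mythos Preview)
Your argument is correct and follows essentially the same route as the paper: the Cauchy-in-Hausdorff step via Theorem~\ref{main_thm3} and the uniqueness step (applying the separation estimate to a physical solution, which is an $r$-approximate evolution for every $r$) are exactly what the paper does. The only organizational difference is that you spell out the smooth-compactness upgrade showing the limit is itself a physical solution, whereas the paper has already recorded this as Theorem~\ref{smooth_ws} and simply invokes it.
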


The idea of the proof of Theorem \ref{main_thm3} is that the conditions of Definition \ref{app_phys_sol} imply that for fixed $t>0$ and  small $r,s$, $Z^{s}_t$ is a graph of a function $u$ over $Y^{r}_t$ in a tubular neighborhood of the latter. In Lemma \ref{graph_pde} we derive a PDE for $u$, a derivation which is the parabolic analogue of (and is based on) a similar calculation done for the minimal surface case in \cite{CM}. Once this is done, a bootstrap argument based on the maximum principle, starting from  the ``crude'' bounds coming from \eqref{uni_est_dist}, will give an estimate for $d_H(Y^{s}_t,Y^r_t)$.

\vspace{5 mm}

While Theorem \ref{real_main_thm} is much stronger than Corollary \ref{weak_uniq_cor}, concluding it using what we described already is very easy. The only thing one needs to note is that in Theorem \ref{aux_reif_thm}, the approximating hypersurfaces can be chosen to be either entirely in the compact domain bounded by $X$ or in its complement (see Corollary \ref{aux_reif_cor}). Those inward and outward $r$-approximate evolutions of $X$ form barriers to the level set flow from inside and outside. By Theorem \ref{main_thm3} they converge to the same thing, from which Theorem \ref{real_main_thm} follows.  

\vspace{5 mm}

The organization of the paper is as follows: In Section \ref{prelim} we sketch the proof of Theorem \ref{aux_reif_thm} and collect some more auxiliary result. In Section \ref{exist_sec} we prove Theorem \ref{main_thm2} assuming the interior estimate, Theorem \ref{main_estimate}. In Section \ref{uniq_sec} we prove Theorem \ref{main_thm3} and conclude the proofs of Theorem \ref{real_main_thm}. In Section \ref{est_sec} we close the argument by proving Theorem  \ref{main_estimate}.

\vspace{5 mm}

\noindent\textbf{Acknowledgments:} The author wishes to thank his advisor Bruce Kleiner for suggesting the problem, as well as for many useful discussions. He also wishes to thank Jacobus Portegies for carefully reading and commenting on an earlier version of this note, as well as for his many suggestions. He also wishes to thank  Robert Haslhofer, for his many suggestions and comments about an earlier version of this notes, and Jeff Cheeger for his generous support during the months in which this project was initiated.

\section{Preliminaries}\label{prelim}
In this section we record some known theorems and derive several simple auxiliary results which will be used later. 
\vspace{5 mm}

We first remark on the proof of  Theorem \ref{aux_reif_thm}, as it appears in \cite{HW}. The first reason we do not regard this theorem as a black box is that condition (3) of Theorem \ref{aux_reif_thm} does not appear in \cite{HW} explicitly. It is, however, a transparent corollary of their construction. The second reason is that we will need to generalize it a bit to force the approximating surfaces to be entirely inside or outside $X$ (see Corollary \ref{aux_reif_cor}). Let $X$ be an $(\de,R)$-Reifenberg flat set, and let $\Omega$ be the domain bounded by $X$ (recall that by \cite{Reif} $X$ is a topological manifold, and so by Jordan's separation theorem there exists such a domain).

\proof[Proof sketch of Theorem \ref{aux_reif_thm} \cite{HW}] The approximating surfaces $X^r$ are constructed as  level sets of  mollifications at scale $r$ of the characteristic function of the domain bounded by $X$. More precisely, choose a smooth radial bump function $\tilde{\phi}\in C^\infty_0(B(0,1))$  such that:
\begin{enumerate}
\item $0 \leq \tilde{\phi} \leq 1$
\item $\tilde{\phi}|_{B(0,1/2)}= 1$ 
\end{enumerate}
and let $c_1$ be a constant such $\phi=c_1\tilde{\phi}$ satisfies $\int \phi(x)dx=1$. Define 
\begin{equation}
\phi_r(x)=\frac{1}{r^n}\phi\left(\frac{x}{r}\right).
\end{equation}
Now, since $X$ is in particular a topological manifold (as was shown in \cite{Reif}), by Jordan separation theorem we can set $\chi$ to be the characteristic function of the domain $\Omega$ bounded by $X$ and let $\chi_r(x)=\phi_r \star \chi(x)$. The approximating sets $X^r$ are defined to be
\begin{equation}
X^r=\chi_r^{-1}\left(\frac{1}{2}\right).
\end{equation}

\noindent (1) To show that $X^r$ satisfies the first condition of Theorem \ref{aux_reif_thm} for $r<R/4$, choose $x^r\in X^r$, $x\in X$ which is closest to $x^r$, $T$ a hyperplane that satisfies the Reifenberg condition at $x$ at scale $4r$ and $\nu$ a normal to $T$ such that 
\begin{equation}\label{rd1}
\Omega\cap B(x,4r) \supseteq \{y\;|\;\left<y-x,\nu\right> \geq 4r\de\}\cap B(x,4r)
\end{equation}
and
\begin{equation}\label{rd2}
\Omega^c\cap B(x,4r) \supseteq \{y\;|\;\left<y-x,\nu\right> \leq -4r\de\}\cap B(x,4r).
\end{equation}

By the definition of $X^r$, it is clear that $B(x^r,r) \subseteq B(x,4r)$ and \eqref{rd1},\eqref{rd2} imply that $\left|\left<x^r-x,\nu\right>\right| \leq 4r\de$, else $\chi_r(x^r)$ would be too low/high. This also implies $d(x^r,X)<8r\de$. Similarly for any $x\in X$ and $T,\nu$ as above $\chi_r(x+4r\de \nu) > \frac{1}{2}$ while $\chi_r(x-4r\de \nu) < \frac{1}{2}$ so by the intermediate value theorem we also have $d(x,X^r) \leq 4r\de$. Thus
\begin{equation}
d_H(X^r,X) \leq 8\de r.
\end{equation}

\noindent (2) To show that $X^r$ satisfies the second condition of Theorem \ref{aux_reif_thm}, choose $x,T,\nu$ as above and  let $\{e_1,\ldots,e_n\}$ be an orthonormal basis for $T$. Setting $S=\{y\;|\;\left|\left<z-x,\nu\right>\right| \leq 4r\de\}$ and taking $y\in B(x,2r)\cap S$ one can compute, splitting the convolution integrals according to $S$ and the two half spaces from \eqref{rd1},\eqref{rd2} (see \cite[2.2]{HW}), that if $\de$ is sufficiently small
\begin{equation}\label{der_bound}
\begin{aligned}
&\;\;\;\;\;\;\;\;\;\;\;\;\;\;\;\;\;\;\;\left<\nabla \chi_r(y),\nu\right> \geq \frac{c_1}{r},\;\;\;\;\;\; \left|\left<\nabla \chi_r(y),e_i\right>\right| \leq \frac{c_2\de}{r}, \\
&\left|\mathrm{Hess}\chi_r(\nu,\nu)\right| \leq \frac{c_2}{r^2},\;\;\; \left|\mathrm{Hess}\chi_r(e_i,\nu)\right| \leq \frac{c_2\de}{r^2},\;\;\; \left|\mathrm{Hess}\chi_r(e_i,e_j)\right| \leq \frac{c_2\de}{r^2},
\end{aligned}
\end{equation}
for some constants $c_1,c_2$. Defining $C(x,r)=\{y\; | \; \left|T(y-x)\right|\leq r\}$, the first inequality of \eqref{der_bound} together with what was done in (1) shows that in $B(x,2r)\cap C(x,r)$, $X^r$ is a graph of a function $u$ over $T$ (thinking of $T$ as an affine hyper-plane passing through $x$) with 
\begin{equation}\label{close_dist}
|u|\leq 4r\de.
\end{equation}
 The other inequalities of \eqref{der_bound} further imply that
\begin{equation}\label{small_curva}
|A| \leq \frac{c_3\de}{r}.
\end{equation}    

\noindent (3) Note that condition \eqref{small_curva} together with $X^r$ being a graph over $T$ in $B(x,2r)\cap C(x,r)$ imply that for $\de>0$ small enough $\mathrm{inj}(X^r)\geq \frac{r}{2}$, where $\mathrm{inj}$ denotes the injectivity radius of the normal exponential map. Now,  considering the scale $\frac{r}{4}$, if $\de$ is small enough, the same $T$ as the one for scale $r$ will work. By using parts (1) and (2) for $X^{r/4}$ and in particular, it being a graph over $T$ at scale $r/4$, one sees by interpolating \eqref{close_dist} and \eqref{small_curva} that $X^{r/4}$ is a graph of a function $u$ over $X^r$ with $|u(y)| \leq 6r\de$ for every $y\in X^r$ (see also Lemma \ref{interpolation2}). To put differently, there is a homeomorphism $\phi_r:X^r \rightarrow X^{r/4}$ with 
\begin{equation}
d(\phi_r(y),y) \leq 6r\de.
\end{equation}
Composing those maps we see that there is a homeomorphism $\phi_{r}^{k}:X^r \rightarrow X^{r/4^k}$ with 
\begin{equation}\label{tot_diff}
d(\phi_r^k(y),y) \leq 12r\de.
\end{equation}
Now, note that the analysis in (1),(2) shows that for every $x\in X$ one can write  $X^r\cap B(x,r)=G\cup B$ where $G$ is connected and $B \cap B(x,(1-8\de)r)=\emptyset$. Thus, using \eqref{tot_diff} we see that one can write $X^{r/4^k}\cap B(x,r)=G\cup B$ where $G$ is connected and $B \cap B(x,(1-20\de)r)=\emptyset$. This concludes the (sketch of the) proof of the theorem $\Box$.

\vspace{5 mm}

The following slight strengthening of Theorem \ref{aux_reif_thm} will play a role in the final stage of our argument for uniqueness.

\begin{corollary}\label{aux_reif_cor}
There exist some constants $c_1,c_2>0$ such that if $X$ is $(\de,R)$-Reifenberg flat for $0<\de<\de_0$ and bounds a domain $\Omega$  then there exists a family of surfaces $(X^r_\pm)_{0<r<R/4}$ such that
\begin{enumerate}
\item  $d_H(X^r_\pm,X) \leq c_1\de r$.
\item $|A(x)| \leq \frac{c_2\de}{r}$ for every $x \in X^r_\pm$.
\item for every $x\in X$, $r\in(0,R/4)$ and $s\in(r,R/4)$, $B(x,s)\cap X^r_\pm$ can be decomposed as
\begin{equation}
B(x,s)\cap X^r_\pm=G\cup B
\end{equation}
where $G$ is connected and $B \cap B(x,(1-40\de)s)=\emptyset$. 
\item $X^r_- \subseteq \Omega$ and $X^r_+ \subseteq\bar{\Omega}^c$.
\end{enumerate}
\end{corollary}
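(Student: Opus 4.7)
The plan is to modify the construction from the proof of Theorem~\ref{aux_reif_thm} by taking slightly shifted level sets of the same mollified characteristic function $\chi_r=\phi_r\star\chi$. Define
\[ X^r_-:=\chi_r^{-1}\bigl(\tfrac12+K\de\bigr),\qquad X^r_+:=\chi_r^{-1}\bigl(\tfrac12-K\de\bigr), \]
where $K>0$ is a universal constant to be fixed. With the Reifenberg normal $\nu$ chosen to point into $\Omega$, the estimate $\left<\nabla\chi_r,\nu\right>\ge c_1/r$ in \eqref{der_bound} makes $\D_\nu\chi_r>0$ throughout the tubular neighborhood of $X$ in which the proof of Theorem~\ref{aux_reif_thm} takes place. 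Consequently, level sets above $1/2$ lie deeper in $\Omega$ and those below lie deeper in $\bar\Omega^c$. Provided $K\de$ is small, all the pointwise bounds of \eqref{der_bound} continue to apply uniformly on the cylinders containing $X^r_\pm$.

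For items (1)--(3) I would rerun the proof of Theorem~\ref{aux_reif_thm} essentially verbatim. The lower bound $\left<\nabla\chi_r,\nu\right>\ge c_1/r$ shows $X^r_\pm$ sits inside the slab $\{|\langle\cdot-x,\nu\rangle|\le C(K)\de r\}$ near each $x\in X$, which gives (1) with a new constant. Property~(2) only uses the gradient/Hessian bounds \eqref{der_bound}, which are independent of the level value, so the graphical curvature computation leading to \eqref{small_curva} is unchanged. For (3), the interpolation--composition scheme building the homeomorphisms $\phi_r^k:X^r_\pm\to X^{r/4^k}_\pm$ applies identically; the additional $O(K\de r)$ drift coming from the shifted level value is absorbed by replacing the original $20\de$ by $40\de$, which is possible provided $K$ (chosen in the next step) is not too large relative to the universal constants.

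The genuinely new content is (4). Take $x^r\in X^r_-$ and let $x'\in X$ realize the minimum distance from $x^r$ to $X$; by (1), $|x^r-x'|\le C(K)\de r<r$ once $\de$ is small, so the Reifenberg plane $T$ at $x'$ at scale $4r$ applies with $B(x^r,r)\subseteq B(x',4r)$. Let $y_0$ be the unique point of $X^r$ on the line through $x^r$ parallel to $\nu$ (well defined since $X^r$ is graphical over $T$ in this cylinder). The original argument yields $\left<y_0-x',\nu\right>\ge -4r\de$, while integrating $\D_\nu\chi_r$ along the segment $[y_0,x^r]$ and using the elementary upper bound $|\nabla\chi_r|\le C/r$ (from $|\chi|\le 1$ and $\|\nabla\phi_r\|_{L^1}\le C/r$) gives
\[ K\de \;=\; \chi_r(x^r)-\chi_r(y_0)\;\le\;\tfrac{C}{r}\left<x^r-y_0,\nu\right>, \]
so $\left<x^r-y_0,\nu\right>\ge K\de r/C$. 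Combining,
\[ \left<x^r-x',\nu\right>\;\ge\;-4r\de+K\de r/C, \]
which exceeds $4r\de$ as soon as $K\ge 8C$. By \eqref{rd1}, $x^r\in\Omega$; a symmetric argument using \eqref{rd2} yields $X^r_+\subseteq\bar\Omega^c$.

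The main technical obstacle is harmonizing the constant choices: $K$ must be large enough for (4) yet small enough that the additional Hausdorff drift preserves (1)--(3) with the stated constants (in particular the jump from $20\de$ to $40\de$ in (3) must accommodate $K$). This is possible because $K$ depends only on the universal constants of \eqref{der_bound}, which are themselves independent of $\de$; one fixes $K$ first and then shrinks $\de_0$ so that $K\de$ is a negligible perturbation throughout the construction.
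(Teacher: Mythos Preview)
Your approach is correct and takes a genuinely different route from the paper. The paper constructs $X^r_\pm$ as normal pushoffs $\{x\pm 10\de r\,N(x):x\in X^r\}$, after first checking that $X^r$ has a tubular neighborhood of width $r/4$ (from conditions (1),(3) of Theorem~\ref{aux_reif_thm}) and that $|\nabla A|\le c_3\de/r^2$ (from third partials of $\chi_r$); property (4) then follows because the tangent planes of $X^r$ are almost parallel to the Reifenberg planes, so the normal push is nearly a push along $\nu$ and lands outside the $4\de r$-slab, hence in $\Omega$ (resp.\ $\bar\Omega^c$) by \eqref{rd1},\eqref{rd2}. Your construction via shifted level sets $\chi_r^{-1}(\tfrac12\pm K\de)$ stays entirely within the analytic framework of \eqref{der_bound}: it avoids the normal exponential map, the injectivity radius, and the extra $\nabla A$ estimate, and obtains (4) directly from the monotonicity $\partial_\nu\chi_r\ge c_1/r$. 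Your quantitative step for (4) --- integrating $\partial_\nu\chi_r$ along $[y_0,x^r]$ against the crude upper bound $|\nabla\chi_r|\le C/r$ to force a lower bound on the $\nu$-displacement --- is clean; the only point left implicit is that this segment remains in the slab where \eqref{der_bound} holds, but that follows from (1) since both endpoints lie within $O(\de r)$ of $X$. Whether the precise constant $40\de$ in (3) is achieved depends on how large $K$ must be (hence on the bump $\phi$), but this is immaterial for the application.
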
 
\proof  Let $N$ be the exterior unit normal to the $X^r$ from Theorem \ref{aux_reif_thm}. When $\de$ is small enough, conditions (1),(3) of Theorem \ref{aux_reif_thm} imply that  $X^r$ has a tubular neighborhood of thickness $r/4$ (see Lemma \ref{taub_lemma} for a proof in an analogous situation). Moreover, computing the third partials for $\chi_r$ from the above theorem shows that the $X^r$ also satisfy the estimate
\begin{equation}
|\nabla A| \leq c_3\frac{\de}{r^2}.
\end{equation}
Thus, considering
\begin{equation}
X^r_{\pm}=\{x\pm 10\de r N(x)\;|\;x\in X^r\}
\end{equation}
we see that $X^r_\pm$ satisfy condition (1)-(3). Let $x\in X$ and let $T,\nu$ be as in (1) in the proof of Theorem \ref{aux_reif_thm}, then since for every in $y\in B(x,r)\cap X^r$ the tangent space to $X^r$ at $y$ is almost parallel to $T$ by (2) in the proof, we see by part (1) of the proof that one of the $X^r_{\pm}\cap B(x,r)$ will lie in $\Omega$ and the other will lie in $\bar{\Omega}^c$. Thus $X^r_{\pm}\cap X=\emptyset$ with one of the $X^r_{\pm}$ in each component $\Box$.

\vspace{5 mm}
We will now recall  the gradient estimate of Ecker and Huisken. Let $X$ be a hypersurface in  $\mathbb{R}^{n+1}$ such that in $B(x,r)$ it can be parametrized locally as a graph of a function $u$ over the first $n$ coordinates. If $\nu(x)$ is the unit normal to $X$ at $x$, the gradient function $v:X\cap B(x,r)\rightarrow \mathbb{R}_{+}$  is defined to be
\begin{equation}
v(x)=\frac{1}{\nu \cdot e_{n+1}}=\sqrt{1+|\nabla u|^2}.
\end{equation}
\begin{theorem}[Ecker-Huisken Gradient estimate \cite{EH}, see also \cite{Eck}]\label{EH} 
Let $(X_{t})_{t\in(t_0,t_1)}$ be a solution for the mean curvature flow in $\mathbb{R}^{n+1}$ and suppose that in the ball $B(x_0,r)$ $X_{t_0}$ is locally a graph over the first $n$ coordinates. Then $X_t$ is locally a graph over the first $n$ co-ordinates in $B(x_0,\sqrt{r^2-2n(t-t_0)})$ and 
\begin{equation}
\left(1-\frac{|x-x_0|^2+2n(t-t_0)}{r^2}\right)v(x,t) \leq \max_{x\in B(x_0,r)\cap X_{t_0}} v(x,t_0)
\end{equation}
$\Box$.
\end{theorem}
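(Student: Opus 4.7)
The plan is to apply the parabolic maximum principle to the product of the gradient function $v$ with a carefully chosen spacetime cutoff. The first ingredient is the evolution equation for $v$ along MCF: writing $v = 1/\langle \nu, e_{n+1}\rangle$ wherever $X_t$ is graphical over $e_{n+1}^\perp$, and using that $e_{n+1}$ is a parallel ambient vector field, a standard MCF computation gives
\[
\left(\partial_t - \Delta_{X_t}\right) v = -|A|^2 v - \frac{2\,|\nabla v|^2}{v},
\]
where $\Delta_{X_t}$ is the intrinsic Laplacian on the flowing surface.

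The second ingredient is a cutoff adapted to MCF. Set $\psi(x,t) := r^2 - |x-x_0|^2 - 2n(t-t_0)$. Since positions evolve by $\vec H$ under the flow while $\Delta_{X_t}|x-x_0|^2 = 2n + 2\langle x-x_0, \vec H\rangle$ on any hypersurface, combining these gives the identity $(\partial_t - \Delta_{X_t})\psi = 0$ along the flow --- this is precisely what forces the coefficient $2n$ in the statement. Now set $w := \psi v$ on the open set $\Omega_t := \{x \in X_t : \psi(x,t) > 0\}$. The product rule gives
\[
(\partial_t - \Delta_{X_t}) w = -\psi |A|^2 v - \frac{2\psi|\nabla v|^2}{v} - 2\,\nabla\psi \cdot \nabla v,
\]
and at a spatial interior maximum of $w$, the first-order condition $\nabla(\psi v) = 0$ forces $\nabla v = -(v/\psi)\nabla\psi$. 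Substituting, the last two terms cancel exactly, leaving $(\partial_t - \Delta_{X_t})w = -\psi |A|^2 v \leq 0$.

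Since $w$ vanishes on $\partial\Omega_t$, its spatial maximum is attained in the interior whenever positive; combined with $\Delta w \leq 0$ at this maximum, the computation above shows that $t \mapsto \max_{X_t} w$ is nonincreasing. Therefore
\[
\psi(x,t)\, v(x,t) \leq \max_{X_{t_0}\cap B(x_0,r)} \psi(\cdot,t_0)\, v(\cdot,t_0) \leq r^2 \max_{X_{t_0}\cap B(x_0,r)} v(\cdot,t_0),
\]
and dividing by $r^2$ yields the stated inequality. The preceding argument presupposes that $X_t$ remains graphical over $e_{n+1}^\perp$ on the shrinking ball $B(x_0, \sqrt{r^2 - 2n(t-t_0)})$; let $T^*$ be the supremum of times for which this holds. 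On $[t_0, T^*)$ the estimate uniformly bounds $v$ by its initial maximum, which keeps the unit normal uniformly transverse to $e_{n+1}^\perp$, and a standard open--closed/continuity argument then extends graphicality up to $t_1$.

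The main technical point is the \emph{exact} cancellation of cross terms in the second step: it is enabled entirely by the choice of $\psi$ as a solution of the heat equation on the flowing surface, and any other quadratic cutoff would leave an extra term of uncertain sign. Everything else is routine, though the evolution equation for $v$ does hide a nontrivial pointwise computation using the Codazzi equations and the fact that $e_{n+1}$ is parallel.
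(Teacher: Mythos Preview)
Your argument is correct and is essentially the original Ecker--Huisken proof: compute the evolution of $v$, multiply by the MCF-caloric cutoff $\psi=r^2-|x-x_0|^2-2n(t-t_0)$, and use the first-order condition at an interior maximum to kill the cross terms. Note, however, that the paper does not give its own proof of this statement; it is quoted as a known result from \cite{EH} (and \cite{Eck}) and closed with a $\Box$, so there is nothing in the paper to compare against beyond the cited references, whose argument yours reproduces.
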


Controlling the gradient in a space-time neighborhood allows one to control the second fundamental form (and its derivatives) if one allows the hypersurface to evolve a little bit.
\begin{theorem}[\cite{EH}, see also \cite{Eck}]\label{EH_curv}
There exist a universal constant $C=C(n)$ such that if $v \leq v_0$ in $B(x,r)\times(t_0,t_0+r^2)$ then
\begin{equation}
|A|^2 \leq C\frac{v_0^4}{r^2},\;\;\;\;\;\; |\nabla A|^2 \leq C\frac{v_0^4}{r^4},\;\;\;\;\;\;\;|\partial_tA|^2 \leq C\frac{v_0^4}{r^6} 
\end{equation}
in $B(x,r/2)\times(t_0+3r^2/4,t_0+r^2)$ $\Box$.
\end{theorem}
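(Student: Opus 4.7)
The plan is the classical Ecker-Huisken Bernstein-type argument for the curvature bound, followed by Shi-type derivative iterations for the higher-order bounds. I would start by recording the schematic evolution equations along MCF:
$(\partial_t - \Delta)|A|^2 = -2|\nabla A|^2 + 2|A|^4, \qquad (\partial_t - \Delta) v = |A|^2 v - 2|\nabla v|^2/v,$
the second being a direct consequence of $(\partial_t - \Delta)(\nu \cdot e_{n+1}) = -|A|^2(\nu \cdot e_{n+1})$ and the chain rule applied to $v = 1/(\nu \cdot e_{n+1})$. The hypothesis $v \leq v_0$ on $B(x_0, r)\times(t_0, t_0+r^2)$ now enables us to trade the dangerous $+2|A|^4$ in the first equation against the $+|A|^2 v$ appearing in the second.

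I would then consider a Bernstein-type combination $g := |A|^2 \phi(v)$, where $\phi$ is a positive decreasing function chosen so that the product rule cancels the $|A|^4$ contribution; in flat ambient space the concrete choice $\phi(v) = v^{-2}$ makes this cancellation exact, since $\phi \cdot 2|A|^4 + |A|^2 \phi'(v) \cdot |A|^2 v = 0$. A direct computation produces an inequality of the schematic form
$(\partial_t - \Delta) g \leq 2\,(\nabla \log \phi) \cdot \nabla g - \frac{2}{v^2}|\nabla A|^2 + (\text{lower-order gradient terms in } v).$
I then apply the maximum principle to $\eta^2 g$, with $\eta$ a standard parabolic cutoff supported in the large cylinder $B(x_0, r)\times[t_0, t_0+r^2]$, identically one on the inner cylinder $B(x_0, r/2)\times [t_0+3r^2/4, t_0+r^2]$, and satisfying $|\nabla \eta|^2/\eta + |\partial_t \eta|/\eta \lesssim r^{-2}$. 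At an interior maximum of $\eta^2 g$, the cross terms from $\nabla(\eta^2)\cdot \nabla g$ are absorbed into $-|\nabla A|^2/v^2$ via Kato's inequality and Cauchy-Schwarz, and the contributions from $|\nabla \eta|, |\partial_t \eta|$ are at most $O(r^{-2})$. The resulting inequality yields $\max(\eta^2 g) \leq C r^{-2}$, and using $v\leq v_0$ to lift from $g$ back to $|A|^2$ gives $|A|^2 \leq C v_0^4/r^2$ on the inner cylinder.

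For the higher-order bounds I would perform a standard Shi-type iteration on shrunken cylinders. The evolution equation
$(\partial_t - \Delta)|\nabla A|^2 = -2|\nabla^2 A|^2 + (A,\nabla A)\ast(A,\nabla A)$
admits the same cutoff-plus-maximum-principle treatment on a slightly larger sub-cylinder, using the just-established $|A|^2 \leq C v_0^4/r^2$ bound to tame the quadratic terms; this yields $|\nabla A|^2 \leq C v_0^4/r^4$ on $B(x_0, r/2)\times[t_0+3r^2/4, t_0+r^2]$. A further Shi iteration bounds $|\nabla^2 A|^2 \leq C v_0^4/r^6$, and combined with the identity $\partial_t A = \Delta A + A\ast A\ast A$ this immediately gives $|\partial_t A|^2 \leq C v_0^4/r^6$. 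The main obstacle is the Bernstein step: choosing $\phi$ so that the positive $|A|^4$ term in $(\partial_t-\Delta)|A|^2$ is exactly absorbed by the composition with $v$. Once the right Bernstein combination is in hand, the rest is a routine parabolic Shi iteration driven by the previously established curvature bound.
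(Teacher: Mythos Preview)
The paper does not give its own proof of this statement; it is quoted from \cite{EH} (and \cite{Eck}) and closed immediately with a $\Box$. Your outline is the Ecker--Huisken strategy (a Bernstein-type combination of $|A|^2$ with a function of $v$, cutoff plus maximum principle, then Shi-type iteration for $\nabla A$ and $\partial_t A$), so in spirit you are reproducing the cited argument.

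There is, however, a genuine gap in your Bernstein step. With $\phi(v)=v^{-2}$ the $+2|A|^4$ term does cancel, but the resulting inequality for $g=\phi(v)|A|^2$ is only of drift--diffusion type,
\[
(\partial_t-\Delta)g \;\le\; b\cdot\nabla g \;-\;(\text{nonnegative terms}),
\]
with no $-cg^2$ absorption term. A pure subsolution of this form admits no interior (cutoff) bound without boundary data on $g$, which you do not have: the hypothesis is a bound on $v$, not on $|A|$. Your sentence ``the cross terms \ldots are absorbed into $-|\nabla A|^2/v^2$ via Kato's inequality'' does not close the estimate, because Kato gives $|\nabla A|^2\ge|\nabla|A||^2$, which bounds $|\nabla A|^2$ from below by a gradient quantity, not by $g^2$; at the maximum of $\eta^2 g$ you are left with $0\le Cg/r^2$, which is vacuous. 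The actual Ecker--Huisken localization uses the nonlinear test function $\varphi(v^2)=v^2/(1-kv^2)$ with $k=(2\sup v^2)^{-1}$; the convexity of $\varphi$ produces an additional negative term of size $-c\,\varphi(v^2)^{-1}g^2$, and it is this term that absorbs the cutoff errors and yields $\max(\eta^2 g)\le C v_0^4/r^2$. (A minor side remark: the gradient term in your evolution for $v$ has the wrong sign; the standard formula is $(\partial_t-\Delta)v=|A|^2v+2v^{-1}|\nabla v|^2$. This does not affect the $|A|^4$ cancellation but does affect the leftover terms.) Once the correct Bernstein function is used, your Shi iteration for $|\nabla A|$ and the identity $\partial_t A=\Delta A+A\ast A\ast A$ for $|\partial_t A|$ are exactly right.
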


The above classical curvature estimate of Ecker and Huisken will not be good enough for our purposes. We will be interested in considering the case where $|\nabla u|$ is very small, in which the above estimate is not very useful. Indeed, since $v=\sqrt{1+|\nabla u|^2}$ the above estimate will yield that $|A|^2$ is, up to a constant, smaller than  $\frac{1}{r^2}$, even if $|\nabla u|$ were very small (in the entire neighborhood). When $|\nabla u|=0$ however, $|A|=0$ so there is clearly a big gap to be filled in that regime. The entire Section \ref{est_sec}, culminating in the proof of Theorem \ref{main_estimate}, will deal  with proving a curvature estimate suitable for small $|\nabla u|$ (and small initial $|u|$). 

\vspace{5 mm}

Controlling the second fundamental form at a certain time allows one to control it for a little bit.
\begin{lemma}\label{curv_max_prin}
For every $\delta>0$ there exists $c=c(n,\delta)>0$ such that if $X_t$ flows by mean curvature and 
\begin{equation} |A(0)| \leq \alpha
\end{equation}
 then
\begin{equation}
|A(t)|\leq (1+\delta)\alpha
\end{equation}
for $0 \leq t \leq \frac{c}{\alpha^2}$.
\end{lemma}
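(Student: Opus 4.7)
The plan is a straightforward parabolic maximum principle argument applied to the evolution equation of $|A|^2$. Recall Hamilton's identity for hypersurfaces flowing by mean curvature in $\mathbb{R}^{n+1}$:
$$\partial_t |A|^2 = \Delta |A|^2 - 2|\nabla A|^2 + 2|A|^4.$$
Dropping the nonpositive gradient term yields the differential inequality $\partial_t |A|^2 \leq \Delta |A|^2 + 2|A|^4$. Since the surfaces in this paper are closed (the $X^r$ are compact level sets coming from the compact Reifenberg set), the function $f(t) := \max_{x \in X_t} |A|^2(x,t)$ is finite and attained.

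Applying the parabolic maximum principle, $f$ satisfies $\dot f \leq 2 f^2$ in the upper Dini sense for as long as it remains finite. Separating variables gives
$$f(t) \leq \frac{f(0)}{1 - 2 t f(0)}\quad \text{for } t < \frac{1}{2 f(0)}.$$
Since $f(0) \leq \alpha^2$ by hypothesis, I would choose $c = c(\delta)$ small enough that $2c \leq 1 - (1+\delta)^{-2}$. Then for $0 \leq t \leq c/\alpha^2$ the above estimate yields $f(t) \leq (1+\delta)^2 \alpha^2$, i.e., $|A(t)| \leq (1+\delta)\alpha$. Note that, in this pure-ODI form, the constant $c$ depends only on $\delta$; the author's statement allows dimensional dependence, which is harmless.

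There is essentially no substantive obstacle; the only minor point that would require care in a more general setting is the validity of the maximum principle for $|A|^2$, but on closed $X_t$ this is classical, and for short time the flow starting from our smooth compact $X^r$ certainly remains smooth and compact by standard short-time existence, so $f$ remains finite throughout $[0, c/\alpha^2]$ (in fact $c/\alpha^2$ is strictly less than the blow-up time $1/(2\alpha^2)$ predicted by the ODI itself, so one does not even need to quote a separate short-time existence result beyond the smooth initial regularity).
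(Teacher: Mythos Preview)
Your argument is correct and essentially identical to the paper's: the paper uses the same evolution inequality $\partial_t|A|^2 \leq \Delta|A|^2 + 2|A|^4$, applies the maximum principle to get $|A(t)| \leq \alpha/\sqrt{1-2\alpha^2 t}$, and concludes. Your observation that $c$ can be taken to depend only on $\delta$ (not $n$) is also correct.
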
  
\proof Since under the mean curvature flow
\begin{equation}
\frac{d}{dt}|A|^2 =\Delta|A|^2-2|\nabla A|^2+2|A|^4 \leq \Delta |A|^2+2|A|^4
\end{equation}
(see \cite{Man} for instance) by the maximum principle we obtain that 
\begin{equation}
|A(t)| \leq \frac{\alpha}{\sqrt{1-2\alpha^2t}}
\end{equation}
for as long as the denominator does not vanish. The result follows $\Box$.

We will conclude this section with the following simple geometric lemma for the interpolation of Hausdorff bounds and curvature bounds.

\begin{lemma}[Interpolation]\label{interpolation}
For every $\delta>0$ and $\alpha>0$  there exists $\beta_0(\alpha,\delta)>0$ such that for every $\beta<\beta_0$ the following holds: Assume $p\in X$ where $X$ is a hypersurface such that in $B(p,r)$ we have
\begin{enumerate}
\item $|A| \leq \frac{\alpha}{r}$ .
\item $d_H(P\cap B^n(p,r),X \cap B^n(p,r)) \leq \beta r$ for $P=\spa\{e_1,\ldots,e_n\}$.
\end{enumerate}
 Then inside $B(p,r)\cap \left(B^n(p,(1-\delta)r)\times \mathbb{R}\right)$, the connected component of $p$ is a graph of a function $u$ over $P$  and we have the estimate
\begin{equation}
|\nabla{u}| \leq \frac{\sqrt{2\beta\alpha -\alpha^2\beta^2}}{1-\alpha\beta} \cong \sqrt{2\beta\alpha}
\end{equation}
(and $|u|\leq \beta r$).
\end{lemma}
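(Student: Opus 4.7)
The plan is to establish, at every point $q\in X$ whose horizontal projection lies well inside $B^n(p,(1-\delta)r)$, a pointwise bound on the tilt angle $\theta_q$ between the tangent plane $T_qX$ and $P$, and then to upgrade this pointwise bound to global graphicality of the component containing $p$ via a covering-map/connectedness argument. After translating vertically I may assume $p\in P$, so that the Hausdorff hypothesis reads $|\langle x,e_{n+1}\rangle|\leq \beta r$ for every $x\in X\cap B(p,r)$.

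For the tilt bound at a point $q$, pick a unit vector $v\in T_qX$ realising $\langle v,e_{n+1}\rangle=\sin\theta_q$ and consider the unit-speed geodesic $\gamma$ in $X$ with $\gamma(0)=q$, $\gamma'(0)=v$. The ambient acceleration $\gamma''$ is normal to $X$ with $|\gamma''(s)|=|A(\gamma'(s),\gamma'(s))|\leq \alpha/r$, so the height function $\psi(s)=\langle \gamma(s),e_{n+1}\rangle$ satisfies $|\psi''|\leq \alpha/r$. Taylor-expanding and using $|\psi|\leq \beta r$ on $X\cap B(p,r)$ (valid as long as $\gamma(s)\in B(p,r)$, hence for $s\leq r-|q-p|$), together with the analogous inequality in the direction $-v$ to cancel $\psi(0)$, one gets
$$
s\sin\theta_q - \frac{s^2\alpha}{2r}\;\leq\; \beta r.
$$
Maximising the left-hand side over $s$ (the peak occurs at $s_{*}=r\sin\theta_q/\alpha$), which is admissible once $\beta_0(\alpha,\delta)$ has been chosen so small that $s_{*}\leq \delta r$, gives a quadratic inequality in $\sin\theta_q$ that rearranges to $\sin^2\theta_q\leq \alpha\beta(2-\alpha\beta)$, hence $\cos\theta_q\geq 1-\alpha\beta$. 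Since $|\nabla u(q')|=\tan\theta_q$ at the projection point $q'$, this yields the stated gradient bound $\sqrt{2\beta\alpha-\alpha^2\beta^2}/(1-\alpha\beta)$.

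It remains to promote this to the global graphicality assertion. With $\beta_0$ small, the tilt bound shows that the projection $\pi:\R^{n+1}\to P$ has surjective differential on $T_qX$ at every admissible $q$, so $\pi$ restricted to the connected component $X_p$ of $p$ in $X\cap B(p,r)\cap(B^n(p,(1-\delta)r)\times \R)$ is a local diffeomorphism. Because $X_p$ lies inside the compact slab $\overline{B^n(p,(1-\delta)r)}\times [-\beta r,\beta r]$, this local diffeomorphism is proper onto its image and hence a covering map of its image $\Omega\subseteq B^n(p,(1-\delta)r)$; since $B^n(p,(1-\delta)r)$ is simply connected and $X_p$ is connected, $\pi|_{X_p}$ is in fact a homeomorphism onto $\Omega$. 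A standard open/closed argument, using the height bound to rule out escape through the top and bottom of the slab, forces $\Omega=B^n(p,(1-\delta)r)$ and exhibits $X_p$ as the graph of a function $u$ with $|u|\leq \beta r$ and the gradient bound just derived.

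The main technical obstacle is the calibration of $\beta_0(\alpha,\delta)$: one needs $s_{*}\sim r\sqrt{\beta/\alpha}$ to be dominated by the available room $r-|q-p|$ uniformly for $q$ with $q'\in B^n(p,(1-\delta)r)$, which forces $\beta_0\lesssim \alpha\delta^2$. This smallness simultaneously keeps the optimising geodesic inside $B(p,r)$ and ensures that no stray component of $X$ can intrude into the cylinder to spoil the connectedness step.
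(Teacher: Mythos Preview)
Your geodesic approach to the tilt bound matches the paper's, but note that the bare Taylor estimate $|\psi''|\leq \alpha/r$ only gives $\sin^2\theta_q\leq 2\alpha\beta$, not the sharper $\sin^2\theta_q\leq \alpha\beta(2-\alpha\beta)$ you claim; the paper obtains the exact constant by using instead $\psi''\geq -(\alpha/r)\sqrt{1-(\psi')^2}$ (which follows from $\gamma''\perp\gamma'$) and comparing $\psi$ directly to the height function of a circle of radius $r/\alpha$. This discrepancy is cosmetic, since only the leading order $\sqrt{2\alpha\beta}$ is ever used downstream. Your graphicality step, however, is genuinely different from the paper's: the paper argues injectivity by joining two hypothetical points $x_1\neq x_2$ with $P(x_1)=P(x_2)$ by a minimizing geodesic in the manifold-with-boundary $X\cap\overline{C_{\delta,\beta}}$ and noting that the projected curve has curvature at most $\sqrt{3\alpha\beta}\,\alpha$ in $P$, hence is nearly a straight segment and must exit the disk before it can return---a contradiction; surjectivity is then read off directly from the Hausdorff hypothesis. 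Your covering-map route is cleaner in spirit but the last two steps need to be swapped: simple-connectedness of $B^n(p,(1-\delta)r)$ only buys you one-sheetedness once you already know $\Omega=B^n(p,(1-\delta)r)$, since a connected cover of an \emph{arbitrary} open $\Omega\subset\R^n$ need not be trivial. Run the open/closed argument for $\Omega$ first (the properness you established already gives closedness in the ball), and only then invoke simple connectedness to conclude that the cover is a diffeomorphism.
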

\begin{proof}
Assume w.l.g. $r=1$ and $p=0$ and denote $Q=P^\perp$ and $C_{\delta,\beta}=B^n\left(0,(1-\delta)\right)\times [-\beta ,\beta ]$. For $\beta$ sufficiently small $C_{\delta/4,\beta}\subseteq B(0,1)$ and $\alpha\beta<1$. Now, let $x\in C_{\delta/2,\beta}$ and let $\gamma(t)$ be a unit speed geodesic with $\gamma(0)=p$. We may assume w.l.g., by possibly changing the parametrization according to  $t\mapsto -t$, that $\left<\gamma'(0),e_{n+1}\right>=\max_{v\in Q,\;||v||=1}\left<\gamma'(0),v\right>$ and that $x_{n+1}(\gamma(t)) \geq 0$. Letting $f(t)=x_{n+1}(\gamma(t))$ we find $f'(t)=\left<\gamma'(t),e_{n+1}\right>$ and $f''(t)=\left<\gamma''(t),e_{n+1}\right>=\left<\gamma''(t),e_{n+1}-\left<\gamma'(t),e_{n+1}\right>\gamma'(t)\right>\geq -\alpha\sqrt{1-f'(t)^2}$. The equality case of the above ODE for $f'(t)$ corresponds to a circle of radius $\frac{1}{\alpha}$. Letting $\mu(t):\mathbb{R}\rightarrow \mathbb{R}^2$ be a clockwise and unit speed parametrized circle of radius $\frac{1}{\alpha}$ with $\mu(0)=(0,0)$ and $\left<\mu'(0),e_2\right>=f'(0)$ we see that as long as $x_2(\mu(t))$ is increasing, and as long as $\gamma(t)\in C_{\delta/4,\beta}$,  $x_{n+1}(\gamma(t))\geq x_2(\mu(t))$. For $\beta$ sufficiently small (depending on $\alpha,\delta$) $x_2(\mu(t))$ will reach its maximum at time $0<T<\delta/4$ so the extra condition $\gamma(t)\in C_{\delta/4,\beta}$ is redundant. Thus $x_2(\mu(t))\leq x_{n+1}(\gamma(t))\leq \beta$, and an easy calculation for circles in the plane gives the bound
\begin{equation}\label{almost_tan}
\tan\angle(T_xX,P) \leq \frac{\sqrt{2\beta\alpha -\alpha^2\beta^2}}{1-\alpha\beta}
\end{equation}
for $\beta$ sufficiently small. 

What remains to be shown is that the connected component of $p$ is indeed a graph. Assume there exist $x_1,x_2\in X\cap C_{\delta,\beta}$ with $x_1\neq x_2$ but $P(x_1)=P(x_2)$ (where we use $P$ both for the hyperplane and for the projection operator to it). Observe that by \eqref{almost_tan}, $X\cap \overline{C_{\delta,\beta}}$ is a sub-manifold with boundary. Let $\gamma:[0,a]\rightarrow X \cap \overline{C_{\delta,\beta}}$ be a minimizing geodesic between $x_1$ and $x_2$. Such a geodesic is always $C^1$ and is smooth for as long $\gamma(t)$ is away from the boundary. For such $t$ however $||P(\gamma''(t))|| \leq \sqrt{3\alpha\beta}\alpha$ by \eqref{almost_tan} and so for $\beta$ sufficiently small, and as $\gamma'(0)$ is almost parallel to $P$,  $P(\gamma(t)))$ is almost a straight line until it hits the boundary (at some $t<4$). Since $\gamma(t)$ is $C^1$, and intersects the boundary with an exterior normal component, this is a contradiction. 

To see that for every $y\in B^n(0,1-\delta)$ there is some $x\in X$ with $P(x)=y$, note that by the Hausdorff condition, we can find $\bar{x}\in X\cap B(0,(1-\delta/2))$ with $d(\bar{x},y)\leq \beta$ (when $\beta$ is small). Taking $\bar{y}=P(\bar{x})$ we see, again, by \eqref{almost_tan} for $\bar{x}$, and the fact that the curvature scale $\frac{1}{\alpha}$ is far bigger than $\beta$, that there will exist a point over $y$ as well.            
\end{proof}

\section{Uniform Estimates and Existence of Smooth Evolution}\label{exist_sec}
The purpose of this section it to prove Theorem \ref{main_thm2} which will immediately imply the existence part of Theorem \ref{real_main_thm}, i.e. that at least one weak set flow (see Definition \ref{ls_flow}) of an $(\de,R)$-Reifenberg flat set is smooth whenever $\de>0$ is small enough. In Section \ref{interior_formulation_sec} we will state the interior estimate we will employ and remark on why is it plausible. The proof will be deferred to Section \ref{est_sec}.  In Section \ref{iter_sec} we will perform the iteration step that was described in the introduction. In Section \ref{uniform_sec} we will  prove Theorem \ref{main_thm2} and derive the existence of a smooth weak set flow.
\subsection{Interior Estimate for Graphical Mean Curvature Flow}\label{interior_formulation_sec}

\noindent In order to implement the iteration, it would be the most comfortable to work with the following interior estimate for mean curvature flow, which will be proved in Section \ref{est_sec}. 

\begin{theorem}[main estimate]\label{main_estimate}
There exist some $c\geq 1$ such that for every $\delta>0$ and $M>0$, there exist positive $\tau_0=\tau_0(M,\delta)<<1$ and $\lambda_0=\lambda_0(M,\delta)<<1$ such that for every $0<\lambda<\lambda_0$ there exists some  $\de_0=\de_0(\delta,M,\lambda)$ such that for every $0<\tau<\tau_0$  and $\de<\de_0$ the following holds:

\noindent If $u:B(p,r)\times [0,\tau r^2]\rightarrow \mathbb{R}$ is a graph moving by mean curvature such that:
\begin{enumerate}
\item For every $(x,t) \in B(p,r)\times [0,\tau r^2]$ 
\begin{equation}
|\nabla u(x,t)| \leq M\de. 
\end{equation}
\item  For every $(x,t) \in B(p,r)\times [0,\lambda \tau r^2]$ 
\begin{equation}
|\nabla u(x,t)| \leq \de.
\end{equation}
\item For every $(x,t) \in B(p,r)\times [0,\tau r^2]$ 
\begin{equation}
|u(x,t)| \leq M^2\beta r. 
\end{equation}
\item  For every $(x,t) \in B(p,r)\times [0,\lambda \tau r^2]$ 
\begin{equation}
|u(x,t)| \leq \beta r.
\end{equation}
\end{enumerate}
Then for every $(x,t)\in B(p,(1-\delta)r) \times [0,\tau r^2]$
\begin{enumerate}[label=\Alph*.]
\item 
\begin{equation} 
|A(p,t)| \leq (1+\delta)\frac{1}{\sqrt{\pi}}\frac{\de}{\sqrt{t}}.
\end{equation}
\item 
\begin{equation}
|A(p,t)| \leq c\frac{\beta r}{t}+\delta\frac{\de}{\sqrt{t}}
\end{equation}
\end{enumerate}
\end{theorem}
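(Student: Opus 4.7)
The plan is to treat the graphical mean curvature flow as a small perturbation of the heat equation and to use a Duhamel representation. Writing
\[ u_t - \Delta u = E, \qquad E = -\frac{D^2u(\nabla u,\nabla u)}{1+|\nabla u|^2}, \]
so that $|E|\le |\nabla u|^2|D^2u|$, introduce a smooth cutoff $\eta$ supported in $B(p,r)$ with $\eta\equiv 1$ on $B(p,(1-\delta/4)r)$, and set $\tilde u = \eta u$. Then $\tilde u_t-\Delta\tilde u = \eta E + J$, where $J = -2\nabla\eta\cdot\nabla u - u\,\Delta\eta$ is supported in the annulus $(1-\delta/4)r<|y-p|<r$. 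Duhamel against the Gaussian kernel $G_t$ on $\mathbb{R}^n$ gives
\[ \tilde u(x,t) = G_t * \tilde u_0(x) + \int_0^t G_{t-s} * (\eta E + J)(\cdot,s)(x)\,ds, \]
and differentiating twice at an interior point $x\in B(p,(1-\delta)r)$ reduces the theorem to estimating three pieces: the initial-data term, the cutoff-error term, and the nonlinear term. The cutoff-error piece is controlled using $|\nabla u|\le M\de$ and $|u|\le M^2\beta r$ (Conditions 1 and 3) together with the observation that the relevant integration is over $y$ with $|y-x|\ge\delta r/2$, producing an integrand bounded by $e^{-c\delta^2 r^2/(t-s)}\le e^{-c\delta^2/\tau}$; hence this contribution is negligible once $\tau_0(M,\delta)$ is chosen small.

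The two conclusions (A) and (B) correspond to two ways of distributing derivatives on the initial-data term. For Part A, integrate by parts once: $\partial_i\partial_j(G_t * \tilde u_0)(x) = (\partial_j G_t) * \partial_i\tilde u_0(x)$. The sharp identity $\|\partial_j G_t\|_{L^1(\mathbb{R}^n)} = 1/\sqrt{\pi t}$ (from the product structure of $G_t$) combined with $|\nabla u_0|\le\de$ (Condition 2 at $t=0$) produces the main term $\de/\sqrt{\pi t}$. For Part B, keep both derivatives on the kernel and use $\|\partial_i\partial_j G_t\|_{L^1}\le c/t$ together with $\|u_0\|_\infty\le\beta r$ (Condition 4 at $t=0$) to produce $c\beta r/t$.

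The heart of the proof is bounding the nonlinear Duhamel contribution
\[ N(x,t) := \int_0^t\!\!\int \partial_i\partial_j G_{t-s}(x-y)\,\eta(y) E(y,s)\,dy\,ds \]
by $\delta\,\de/\sqrt{t}$. Start with the crude a priori bounds $|D^2 u(y,s)|\le C/\sqrt{s}$ and $|D^3 u(y,s)|\le C/s$ coming from Theorem \ref{EH_curv}, valid on a slightly smaller sub-cylinder once $\tau_0$ is small relative to $\delta$ (the uniform bound $v\le\sqrt{1+M^2\de^2}$ makes the Ecker--Huisken constants harmless). Split the time integral at $s=t/2$. On $[0,t/2]$, the factor $1/(t-s)$ is bounded by $2/t$; keeping both derivatives on $G_{t-s}$ yields a contribution of order $M^2\de^2/\sqrt{t}$. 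On $[t/2,t]$, integrate by parts once in space to transfer one derivative onto $E$, producing $(\partial_j G_{t-s}) * \partial_i E$; using $|\partial_i E|\lesssim |\nabla u|^2|D^3u|+|\nabla u||D^2u|^2\lesssim (M^2\de^2 + M\de)/s$ together with $s\ge t/2$, this part contributes $\lesssim M\de/\sqrt{t}$. Both bounds become $\le\delta\,\de/\sqrt{t}$ once $\de_0(\delta,M,\lambda)$ is small; a further subdivision of the time integral at $s=\lambda\tau r^2$, using the stronger bound $|\nabla u|\le\de$ on $[0,\lambda\tau r^2]$ rather than $M\de$, is what brings $\lambda_0$ into the parameter hierarchy.

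The main obstacle is exactly the borderline non-integrability of $\|\partial_i\partial_j G_{t-s}\|_{L^1}\sim 1/(t-s)$ against the crude $\|E(s)\|_\infty\sim\de^2/\sqrt{s}$: a naive Duhamel bound diverges logarithmically. The time splitting together with the spatial integration by parts against $E$ on $[t/2,t]$ (which relies on the Ecker--Huisken estimate for $|D^3u|$) is what regularizes the argument. Assembling the sharp initial-data main term, the exponentially small cutoff error, and the controlled nonlinear correction yields both (A) and (B) with the stated constants.
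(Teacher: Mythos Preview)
Your overall architecture---Duhamel with a cutoff, separating initial data, boundary error, and nonlinearity, with the sharp $\|\partial_j G_t\|_{L^1}=1/\sqrt{\pi t}$ for the leading term---is sound and is essentially what the paper does (the paper packages the linear part as a Schauder-type estimate, Theorem~\ref{Schauder}, and proves a ``centered'' version first). The exponential smallness of the cutoff error and the $[0,t/2]$ nonlinear bound $\sim M^2\de^2/\sqrt{t}$ are correct.

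There is, however, a genuine gap in your treatment of the nonlinear Duhamel contribution on $[t/2,t]$. After integrating by parts you correctly get
\[
|\partial_i E|\ \lesssim\ |\nabla u|^2|D^3u|+|\nabla u||D^2u|^2\ \lesssim\ \frac{M^2\de^2+M\de}{s},
\]
using only the crude Ecker--Huisken bound $|D^2u|\le C/\sqrt{s}$. The resulting contribution is $\sim M\de/\sqrt{t}$, and you then assert that ``both bounds become $\le\delta\,\de/\sqrt{t}$ once $\de_0$ is small.'' That is false: $M\de/\sqrt{t}\le\delta\de/\sqrt{t}$ would require $M\le\delta$, which is a constraint on the \emph{given} parameters, not on $\de_0$. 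Subdividing at $s=\lambda\tau r^2$ does not help, since on $[t/2,t]$ (for generic $t$) you only have $|\nabla u|\le M\de$, and even with $|\nabla u|\le\de$ the term $|\nabla u||D^2u|^2$ still yields $C\de/\sqrt{t}$ with an uncontrolled constant.

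The paper closes exactly this gap with a H\"older gradient estimate (Theorem~\ref{holder_grad}): it shows $d_{z_1,z_2}^{\alpha}\,[\nabla u]_{C^\alpha}\le c_1\de$, so that in the Schauder bound the H\"older seminorm of $f=E$ picks up a factor $|\nabla u|\cdot[\nabla u]_{C^\alpha}\lesssim M\de\cdot\de$, making the whole nonlinear contribution quadratic in $\de$ and hence absorbable by choosing $\de_0$ small. In your framework the same idea would work: replace the crude $|D^2u|\le C/\sqrt{s}$ in the dangerous term by something carrying an extra $\de$. Alternatively, your argument as written already yields the suboptimal bound $|D^2u|\le C(M)\de/\sqrt{t}$; feeding this back into $|\nabla u||D^2u|^2$ gives $\sim M\,C(M)^2\de^3/s$, and \emph{then} the $[t/2,t]$ contribution is $\sim C(M)\de^3/\sqrt{t}\le\delta\de/\sqrt{t}$ for small $\de_0$. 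You should make one of these two mechanisms explicit.
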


\begin{remark}
The statement of the above estimate may appear somewhat confusing. We hope that the following trailer to Section \ref{est_sec}  will make it appear more plausible. There is nothing wrong with fixing  $r=1$.
\begin{enumerate}
\item When $|\nabla u|$ is very small, we are dealing essentially with the heat equation, as the non linearity is very small. The estimate 
\begin{equation}
|\nabla u(x,t)| \leq \frac{1}{\sqrt{\pi}}\frac{||u(-,0)||_\infty}{\sqrt{t}}
\end{equation}
 is what one gets when estimating the first derivative to the physical solution of the heat equation in the full space at time $t$ in terms of the $\sup$ norm of the initial time slice. Estimate (A) reflects that fact,  as the derivative of a solution to the heat equation satisfies the heat equation itself. The first term in estimate (B) reflects a similar bound on the second derivative.  

\item Since we are dealing with a domain with boundary, one can not expect to get the same estimate as for the entire space. However, for $\tau$ very small, from the perspective of the point $(0,\tau)$, the $0$ time  slice - a ball of radius $1$ - will look like the entire space. Therefore we should get a constant very close to $\frac{1}{\sqrt{\pi}}$, as expressed in (A). The parameter $M$ in the estimate makes sure that the contribution from the boundary doesn't change the result by too much.  A similar reasoning leads to the first term of (B). 

\item Since the equation is non-linear, there should also be a term coming from the non-linearity. Since the non-linearity is quadratic in the gradient, when the bound $|\nabla u|$ is small enough, it will contribute as little as any small fraction of that gradient bound.  
\end{enumerate} 
\end{remark}

\subsection{Iteration}\label{iter_sec}

Before we dive into the iteration lemma, we need the following two similar calculations. The first allows one to extend the curvature and Hausdorff bounds for a short time.

\begin{lemma}\label{short_time_mot_bound}
For every $\delta>0$ there exists some $\tilde{c}=\tilde{c}(n,\delta)>0$ with the following property: Assume $\alpha,\beta$ are such that $\alpha\beta<1$ and $Y$ is a smooth hypersurface such that for some $r>0$:
\begin{enumerate}
\item For every $y\in Y$ there exists a hyperplane $P_y$ such that
\begin{equation}
d_H(B(y,r)\cap P_y,B(y,r)\cap Y) \leq \beta r.
\end{equation}
\item For every $y\in Y$
\begin{equation}
|A(y)| \leq \frac{\alpha}{r}.
\end{equation}
\end{enumerate}
Then denoting by $Y_t$ the mean curvature flow emanating from $Y$ and writing $E=2\alpha\beta$ we have for every $0\leq t \leq   \tilde{c}\frac{1}{\alpha^2}E r^2$:
\begin{enumerate}
\item $|A(t)| \leq (1+\delta)\frac{\alpha}{r}$.
\item $d_H(B(p,(1-10^{-5}\delta\beta)r)\cap P ,B(p,(1-10^{-5}\delta\beta)r) \cap Y_t) \leq (1+\delta)\beta r $.
\end{enumerate}  
\end{lemma}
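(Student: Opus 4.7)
The plan is to prove conclusion (1) first via the short-time curvature maximum principle, then leverage it to bound each point's displacement under the flow, and finally combine that displacement bound with the Reifenberg hypothesis to deduce the Hausdorff bound (2).

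For (1), I will apply Lemma \ref{curv_max_prin} directly with initial bound $|A(0)| \leq \alpha/r$: it yields $|A(t)| \leq (1+\delta)(\alpha/r)$ for $t \leq c(n,\delta)r^2/\alpha^2$. Our allowed range $t \leq \tilde c E r^2/\alpha^2 = 2\tilde c \beta r^2/\alpha$ lies inside this regime provided $2\tilde c\alpha\beta \leq c(n,\delta)$, which reduces to $\tilde c \leq c(n,\delta)/2$ via the hypothesis $\alpha\beta < 1$. Given (1) and $|H|\leq \sqrt{n}|A|$, each point of $Y$ moves along its MCF trajectory by at most
\[
D \;\leq\; 2\sqrt{n}(1+\delta)\,\tilde c\, \beta r
\]
during our time range, and a further shrinking of $\tilde c$ (depending on $n$ and $\delta$) forces $D \leq 10^{-6}\delta\beta r$, much smaller than the ball shrinkage $10^{-5}\delta\beta r$.

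For the Hausdorff bound in $B(p,\rho)$ with $\rho = (1-10^{-5}\delta\beta)r$, I would pass to the graph picture. By Lemma \ref{interpolation} (applicable once $\alpha\beta$ is small enough, which can be absorbed into the choice of $\tilde c$), the hypersurface $Y$ is locally a graph of $u_0$ over $P$ near $p$ with $|u_0|\leq \beta r$ and small gradient; Theorem \ref{EH} propagates the graphical representation for short times with still-controlled gradient, and integrating $|\partial_t u| = |H|\sqrt{1+|\nabla u|^2}$ yields $|u_t|\leq |u_0| + D \leq (1+10^{-6}\delta)\beta r$. In coordinates where $P = \{x_{n+1} = 0\}$, the orthogonal projection to $P$ is $1$-Lipschitz, so each $y_t = (x, u_t(x))$ is within $|u_t(x)|$ of $P$, and conversely each $w = (w_x, 0) \in P \cap B(p,\rho)$ is within $|u_t(w_x)|$ of the graph point $(w_x, u_t(w_x)) \in Y_t$; both directions of the Hausdorff inequality then follow modulo a small correction.

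The main technical subtlety is precisely that correction: since $p \in Y$ need not lie on $P$, we only have $|p - \pi_P(p)| \leq \beta r$, and the candidate matches above can drift up to $O(\beta r)$ outside $B(p,\rho)$. A small radial adjustment in $P$ (or equivalently a tangential slide in the graph's $x$-domain) of size $O(\beta^2 r)$ suffices to restore them to $B(p,\rho)$, since $\rho^2 - p_z^2 \geq \rho^2 - (\beta r)^2$ bounds the in-plane disk radius from below. Because $\beta^2 r \ll \beta r$ for $\beta$ small, this boundary-of-disk correction is comfortably absorbed into the $(1+\delta)$ factor, at the cost of coordinating the three small parameters $\tilde c$, $\alpha\beta$, and $\beta$ itself — with $\tilde c$ the smallest and $\beta$ small relative to $\delta$ — which closes the argument.
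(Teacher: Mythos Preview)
Your argument for conclusion (1) and the displacement bound $D\leq 10^{-6}\delta\beta r$ is correct and matches the paper's proof exactly: apply Lemma~\ref{curv_max_prin}, then integrate $|H|\leq\sqrt n\,|A|$ over the time interval and shrink $\tilde c$.

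For conclusion (2), however, you take an unnecessary detour through the graph picture (Lemma~\ref{interpolation} and Theorem~\ref{EH}), and this is where a genuine gap appears. Lemma~\ref{interpolation} requires $\beta<\beta_0(\alpha,\delta)$, whereas the hypotheses of the present lemma only give $\alpha\beta<1$; for instance $\alpha=1/2$, $\beta=1$ is allowed. Your claim that this smallness ``can be absorbed into the choice of $\tilde c$'' is not right: $\tilde c$ only controls the length of the time interval and has no bearing on whether the \emph{initial} surface $Y$ can be written as a graph over $P$. Your closing sentence (``$\beta$ small relative to $\delta$'') is an extra assumption not present in the statement.

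The paper avoids this entirely. Once you have the pointwise motion bound $d(\phi_t(y),\phi_0(y))\leq 10^{-6}\delta\beta r$, the Hausdorff estimate at the slightly shrunk radius follows from a purely metric triangle-inequality-in-balls argument, exactly of the type carried out in Lemma~\ref{trian_ball}: given $y_t\in Y_t\cap B(p,\rho)$, pass to the nearby $y\in Y\cap B(p,r)$, use the hypothesis to find $q\in P\cap B(p,r)$ with $d(q,y)\leq\beta r$, and slide $q$ inward along $P$ by at most $r-\rho=10^{-5}\delta\beta r$ to land in $P\cap B(p,\rho)$; the reverse direction is symmetric. No graphicality, no interpolation, and no constraint on $\beta$ beyond $\alpha\beta<1$ is needed. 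Your proof is easily repaired by replacing the graph-picture paragraph with this direct argument.
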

\proof Assume $r=1$. Using the global curvature bound $\alpha$, by Lemma \ref{curv_max_prin} we can find $c_1=c_1(n,\delta)$ such that if 
\begin{equation}
0 \leq t\leq c_1\frac{1}{\alpha^2}
\end{equation}
 we have
\begin{equation}
|A(t)| \leq (1+\delta/4)\alpha.
\end{equation}  
Using the curvature bound to estimate the motion, we see that there exists $c_2(n,\delta)<c_1$ such that for $t\leq c_2\frac{\beta}{\alpha}$ the surface moves by at most $10^{-6}\delta\beta$. Since $\alpha\beta<1$, for every
\begin{equation}
0 \leq t \leq c_2\min\{\frac{\beta}{\alpha},\frac{1}{\alpha^2}\}=c_2\frac{1}{\alpha^2}2\alpha\beta
\end{equation}
we have:
\begin{enumerate}
\item $|A(t)| \leq (1+\delta)\alpha$
\item $d_H(B(p,(1-10^{-5}\delta\beta))\cap P ,B(p,(1-10^{-5}\delta\beta)) \cap Y_t) \leq (1+\delta)\beta $.
\end{enumerate}
(See also Lemma \ref{trian_ball} for how motion bounds are used to obtain Reifenberg flatness at a certain scale) $\Box$.

The second calculation allows one to extend gradient estimates for longer times, gaining a definite large multiplicative error. 
\begin{lemma}\label{time_scales}
For every $\delta>0$ and $\alpha>1$ there exists $M=M(n,\alpha)>0$, $\beta_0=\beta_0(n,\alpha,\delta)>0$ such that for every $0<\beta<\beta_0$, if we set $E=2\alpha\beta$ and $T=Er^2$ the following holds: Assume $(Y_t)_{t\in[0,T]}$ is a mean curvature flow with  
\begin{equation}
|A(0)|\leq \frac{\alpha}{r}
\end{equation} 
such that for every $y\in Y_0$ there exists a hyperplane $P_y$ such that
\begin{equation}
d_H(B(y,r)\cap P_y,B(y,r)\cap Y_0) \leq \beta r.
\end{equation}
Then setting $(\phi_t)_{t\in [0,T]}$ to be the parametrized flow starting from $Y$ (so $Y_t=\phi_t(Y)$) and assuming w.l.g that $y=0$ and $P_y=\spa\{e_1,\ldots,e_n\}$, the connected components of $\phi_t(y)$ in $Y_t\cap B(y,r)\cap \left( B^n(y,(1-\delta)r)\times \mathbb{R}\right)$ is a graph of a function $u:B^{n}(y,(1-\delta)r)\times [0,T]\rightarrow \mathbb{R}$ flowing by mean curvature.
Moreover, we have the estimates
\begin{equation}
|\nabla u(z,t)|\leq M\sqrt{E},  \;\;\;\;\; |u(z,t)| \leq M^2\beta r
\end{equation}
for every $z\in  B^{n}(y,(1-\delta)r)$ and $0 \leq t \leq T$.
Letting $\tilde{c}$ be the constant from Lemma \ref{short_time_mot_bound}, for $z\in  B^{n}(y,(1-\delta)r)$ and $0 \leq t \leq \frac{\tilde{c}}{\alpha^2}T$ we have the better estimate: 
\begin{equation}
|\nabla u(z,t)|\leq 2\sqrt{E},  \;\;\;\;\; |u(z,t)| \leq 2\beta r.
\end{equation}
\end{lemma}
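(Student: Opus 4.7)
The plan is to use the Interpolation Lemma \ref{interpolation} to convert bounds on curvature and Hausdorff approximation by hyperplanes into the desired graphical and gradient bounds, and to iterate Lemma \ref{short_time_mot_bound} to propagate such geometric bounds up to time $T = E r^2$.

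At $t = 0$, a direct application of Lemma \ref{interpolation} at $y = 0$ with $P = \spa\{e_1,\ldots,e_n\}$ produces a graph $u_0$ on $B^n(0, (1-\delta)r)$ with $|\nabla u_0| \leq \sqrt{2\alpha\beta}/(1-\alpha\beta)$ and $|u_0| \leq \beta r$ (using that $\alpha\beta$ is small, enforced by taking $\beta_0$ small). For the ``better'' short-time estimate, apply Lemma \ref{short_time_mot_bound} once with internal parameter $1/2$: this yields $|A(t)| \leq (3/2)\alpha/r$ and a Hausdorff approximation by $P_y$ of parameter $(3/2)\beta$ on a ball of radius essentially $r$, valid for $t \in [0, \tilde{c}T/\alpha^2]$. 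Reapplying the Interpolation Lemma at each such $t$ then gives $|\nabla u(z,t)| \leq 2\sqrt{E}$ and $|u(z,t)| \leq 2\beta r$ on $B^n(0, (1-\delta)r)$, which is the ``better'' part of the conclusion.

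To obtain the weaker bound on the full time interval $[0, T]$, iterate Lemma \ref{short_time_mot_bound} $N := \lceil \alpha^2/\tilde{c} \rceil$ times, where at step $k$ one invokes the lemma on $Y_{T_k}$ with curvature constant $\alpha_k = (3/2)^k \alpha$ and Hausdorff-approximation constant $\beta_k = (3/2)^k \beta$; one then transfers the resulting Hausdorff approximation, which is stated at points $p \in Y_{T_k}$, to one at points $p' \in Y_{T_{k+1}}$ by taking the approximating planes to be unchanged and shrinking the ball to absorb the displacement $|p - p'| \leq n\alpha_k t_1 / r$. Because $\beta_k/\alpha_k = \beta/\alpha$ is preserved, each iteration covers a fixed time increment $t_1 := 2\tilde{c}\beta r^2/\alpha$, so $N$ iterations cover all of $[0, T]$. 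A final application of Lemma \ref{interpolation} with parameters $\alpha_N, \beta_N$ at any $t \in [0, T]$ yields $|\nabla u(z, t)| \leq \sqrt{2\alpha_N\beta_N}/(1-\alpha_N\beta_N) \leq M\sqrt{E}$ with $M := 2(3/2)^N = M(n, \alpha)$, and $|u(z,t)| \leq \beta r + n\alpha_N T/r \leq M^2 \beta r$ after absorbing a dimensional factor into $M$.

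The main technical obstacle is the cumulative shrinkage of the working ball across $N \sim \alpha^2$ iterations: at each step one loses a factor $(1 - 10^{-5}\beta_k/2)$ from Lemma \ref{short_time_mot_bound} and additionally absorbs a displacement of order $\alpha_k t_1/r \sim \beta$ in the center-transfer; summed over $N$ iterations these losses amount to at most $C(n,\alpha)\beta$, which can be forced below $\delta$ by choosing $\beta_0(n, \alpha, \delta)$ sufficiently small, so the final graphical representation is over all of $B^n(0,(1-\delta)r)$. It is essential to fix the internal parameter in Lemma \ref{short_time_mot_bound} to a universal value (such as $1/2$) throughout the iteration, so that the final constant $M$ depends only on $n$ and $\alpha$ and not on the free parameter $\delta$ governing the final ball radius.
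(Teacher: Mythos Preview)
Your argument is correct, but it takes a detour that the paper avoids. The paper's proof is a single step rather than an iteration: since $T = 2\alpha\beta r^2$ and Lemma~\ref{curv_max_prin} controls $|A(t)|$ for $t \leq c/\alpha^2 \cdot r^2$, one simply requires $\beta_0 \leq c/(2\alpha^3)$ so that $T$ already lies inside that window. Then $|A(t)| \leq 2\alpha/r$ holds for \emph{all} $t \in [0,T]$ directly, the motion bound $d(\phi_t(z),\phi_0(z)) \leq 4\sqrt{n}\alpha^2\beta r$ follows by integrating $|H|$, and a single application of Lemma~\ref{interpolation} with parameters $(2\alpha, 5\sqrt{n}\alpha^2\beta)$ gives $|\nabla u| \leq M\sqrt{E}$ with $M \sim \alpha$. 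The ``better'' estimate then follows from one invocation of Lemma~\ref{short_time_mot_bound}, exactly as you do.

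By contrast, your approach iterates Lemma~\ref{short_time_mot_bound} roughly $N \sim \alpha^2/\tilde c$ times, incurring a geometric growth $(3/2)^N$ in the constants and requiring bookkeeping of the ball shrinkage and the center-transfer at each step. This works (the shrinkage is $O(\beta)$ per step and $N$ depends only on $n,\alpha$, so $\beta_0(n,\alpha,\delta)$ can absorb it), but the resulting $M$ is exponential in $\alpha^2$ rather than polynomial in $\alpha$, and the argument is considerably longer. The key observation you are missing is that $T$ itself is already of order $\beta$, so for $\beta$ small it fits inside a single application of the curvature maximum principle---no iteration is needed.
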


\proof Assume w.l.g. that $r=1$. Since $T=2\alpha\beta$, according to Lemma \ref{curv_max_prin} we have for $\beta<\beta_0(\alpha)$
\begin{equation}
|A(t)| \leq 2\alpha
\end{equation}   
for every $0\leq t \leq T$. Thus, we can bound the motion to obtain 
\begin{equation}
d(\phi_t(z),\phi_0(z)) \leq 4\sqrt{n}\alpha^2\beta
\end{equation}
for every $0\leq t \leq T$ and $z\in Y$. Now, when $\beta<\beta_0$ this displacement is very small so as before (see also Lemma \ref{trian_ball}) we get $5\sqrt{n}\alpha^2\beta$ closeness to planes on a slightly smaller ball. We can now use Lemma \ref{interpolation} to obtain graphicallity for every $0 \leq t \leq T$, as well as the estimates
\begin{equation}
|\nabla u(z,t)| \leq 2\sqrt{2\alpha\cdot 5\sqrt{n}\alpha^2\beta}=M(\alpha)\sqrt{E}
\end{equation}     
and 
\begin{equation}
|u(z,t)| \leq 5\sqrt{n}\alpha^2\beta\leq M(\alpha)^2 \beta.
\end{equation}
The second estimate now follows from lemma \ref{short_time_mot_bound} $\Box$.

\vspace{5 mm}

\noindent The main technical point of the argument  is carried out in the following lemma.
\begin{lemma}\label{main_lemma}
 For every $\Lambda>0$ there exist some $\alpha,\beta>0$ and a ``scale change'' parameter $\theta>20$ such that setting $E=2\alpha\beta$ and $T=Er^2$ the following holds: 

\noindent Assume $Y$ is a smooth hypersurface such that for some $r>0$:
\begin{enumerate}
\item for every $y\in Y$ there exists a hyperplane $P_y$ such that
\begin{equation}
d_H(B(y,r)\cap P_y,B(y,r)\cap Y) \leq \beta r.
\end{equation}
\item for every $y\in Y$
\begin{equation}
|A(y)| \leq \frac{\alpha}{r}.
\end{equation}
\end{enumerate}
Then $Y$ flows smoothly by mean curvature for time $T$. Moreover, setting $(\phi_t)_{t\in [0,T]}$ to be the parametrized flow starting from $Y$ and letting $Y_t$ be the flow of hypersurfaces (so $Y_t=\phi_t(Y)$),  we have:
\begin{enumerate}[label=\Alph*.]
\item $d(\phi_0(y),\phi_T(y)) \leq \frac{\beta}{20}(\theta r)$ for every $y\in Y$ and in particular $d_H(Y,Y_T) \leq \frac{\beta}{20}(\theta r)$.
\item for every $y\in Y_T$
\begin{equation}
|A(y)| \leq \frac{\alpha}{\theta r}.
\end{equation}
\end{enumerate} 
Moreover, we can choose the parameters in such a way that the following relations hold:
\begin{enumerate}[label=\Roman*.]
\item $2\theta E<\min\{10^{-6},\Lambda^{-1}\}$.
\item $\alpha>100$.
\item $\alpha^3\beta<\frac{1}{640}$.
\end{enumerate}
\end{lemma}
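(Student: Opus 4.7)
The plan is to combine Lemma \ref{time_scales} to obtain a graphical representation of $Y_t$ with sharp $C^0$ and $C^1$ bounds, apply Theorem \ref{main_estimate} (particularly its version (B)) to upgrade the curvature bound at time $T$, and then integrate the resulting pointwise curvature decay to control the motion. Smoothness on $[0,T]$ is immediate: since $|A(0)| \leq \alpha/r$ and $T = 2\alpha\beta r^2$, Lemma \ref{curv_max_prin} together with relation (III) forces the flow to exist smoothly with $|A(t)| \leq (1+\delta)\alpha/r$. Applying Lemma \ref{time_scales} at each $y\in Y$ then shows that $Y_t$ is locally the graph of a function $u$ over $P_y$ on $B^n(y,(1-\delta_0)r)\times[0,T]$, with the global bounds $|\nabla u| \leq M(\alpha)\sqrt{E}$ and $|u| \leq M(\alpha)^2\beta r$, plus the sharper bounds $|\nabla u| \leq 2\sqrt{E}$ and $|u| \leq 2\beta r$ on $[0,\tilde c\, T/\alpha^2]$, where $\tilde c$ is the constant of Lemma \ref{short_time_mot_bound}.

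The heart of the argument is feeding these into Theorem \ref{main_estimate} with parameters $\de_* = 2\sqrt{E}$, $M_* = M(\alpha)/2$ (matching the gradient bounds), $\beta_* = 4\beta/(1-\delta_0)$ (matching the $|u|$ bounds), $\lambda_* = \tilde c/\alpha^2$, and $\tau_* = E/(1-\delta_0)^2$. The quantifier order $\Lambda\to\alpha\to\delta\to\beta$ works: after picking $\alpha$ large and the tolerance $\delta$ in Theorem \ref{main_estimate} small, one forces $\tau_* < \tau_0$, $\lambda_* < \lambda_0$, and $\de_* < \de_0$ by shrinking $\beta$ (in particular making $E = 2\alpha\beta$ small). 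Estimate (B) at $t = T$ then gives
\[
|A(\phi_T(y))| \;\leq\; c\,\beta_* R/T \;+\; \delta\,\de_*/\sqrt{T} \;\leq\; \bigl(2c/\alpha + 2\delta\bigr)/r,
\]
which is $\leq \alpha/(\theta r)$ for any $\theta \leq \alpha^2/(3c)$ provided $\delta < c/(2\alpha)$; this yields conclusion (B). For conclusion (A) I integrate the curvature along each trajectory: using $|A(t)|\leq (1+\delta)\alpha/r$ on $[0,\tilde c T/\alpha^2]$ and the decay $|A(t)| \leq (1+\delta)\,2\sqrt{E}/\sqrt{\pi t}$ from Theorem \ref{main_estimate}(A) on $[\tilde c T/\alpha^2, T]$, the integrable singularity yields
\[
d(\phi_0(y),\phi_T(y)) \;\leq\; \sqrt{n}\int_0^T |A(t)|\,dt \;\leq\; C\sqrt{n}\,\alpha\beta\, r
\]
for a universal $C$. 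The requirement $C\sqrt{n}\alpha\beta r \leq \beta\theta r/20$ forces $\theta \geq 20 C\sqrt{n}\,\alpha$. Both $\theta \geq 20 C\sqrt{n}\,\alpha$ and $\theta \leq \alpha^2/(3c)$ can be simultaneously satisfied by taking $\alpha$ sufficiently large (in particular past the threshold $100$ of (II)); relations (I) and (III) are then arranged by making $\beta$ small enough relative to $\alpha$ and $\Lambda$.

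The main obstacle is that one must invoke estimate (B), not (A), of Theorem \ref{main_estimate} to get the curvature upgrade. Estimate (A) alone gives $|A(T)| \lesssim 1/r$: an improvement of the initial bound $\alpha/r$ by only the constant factor $\alpha$, and at the new scale $\theta r$ this reads $|A|\cdot\theta r \lesssim \theta$, which is bounded by $\alpha$ only for $\theta$ of universal size, a scale change far too small to iterate. Estimate (B), which exploits the initial $C^0$ smallness of $u$, produces the better bound $|A(T)| \lesssim 1/(\alpha r)$, yielding an extra factor of $\alpha$ which permits the choice $\theta \sim \alpha^2 \gg 1$ and hence the productive change of scale needed to drive the iteration of Theorem \ref{main_thm2}.
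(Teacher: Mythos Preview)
Your argument is correct and follows the paper's proof essentially step for step: set up the graphical representation via Lemma~\ref{time_scales}, feed the resulting two-tier $C^0$/$C^1$ bounds into Theorem~\ref{main_estimate}, use part~(B) for the terminal curvature improvement and part~(A) integrated in time for the displacement, and note (as the paper's remark does) that part~(A) alone would fail to close the iteration. One small quantifier slip: you claim $\lambda_* = \tilde c/\alpha^2 < \lambda_0$ is forced ``by shrinking $\beta$'', but $\lambda_*$ is independent of $\beta$; the paper deals with this by simply setting $\lambda=\min\{\tilde c/\alpha^2,\lambda_0\}$, which is harmless since the sharp estimates from Lemma~\ref{time_scales} on $[0,(\tilde c/\alpha^2)T]$ hold a fortiori on the shorter interval $[0,\lambda T]$.
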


\proof We want to set things up in a way that will allow us to use Theorem \ref{main_estimate}. Assume w.l.g $r=1$, fix $p \in Y$ and assume w.l.g. that $P_p=\spa\{e_1,\ldots, e_n\}$. 

\begin{enumerate}[label= Step \arabic*.]
\item \underline{Choice of parameters:}
\begin{enumerate}
\item \underline{Choosing $\delta$}: Take $\delta=\frac{1}{800\sqrt{n}}$.

\item\label{alpha_choice} \underline{Choosing $\alpha$:} Choose $\alpha=400\sqrt{n}c$ where $c$ is from Theorem \ref{main_estimate}.

\item \underline{Choosing $M$:}  Choose $M=M(\alpha)$ from Lemma \ref{time_scales}.

\item \underline{Choosing $\tau_0$ and $\lambda_0$:} Having fixed $M,\delta$ we can choose $\tau_0(M,\delta)$ and $\lambda_0(M,\delta)$ as in Theorem \ref{main_estimate}.

\item \underline{Choosing $\lambda$} Let $\tilde{c}(n,\delta)$ be the constant from Lemma \ref{short_time_mot_bound} and choose:
\begin{equation}
\lambda =\lambda(\delta,\alpha,\lambda_0)= \min\{\frac{\tilde{c}}{\alpha^2},\lambda_0\}.
\end{equation}

\item \underline{Choosing $\tau=T$:}  Assuming $E$ is small enough, $T$ can be made to be smaller than $\tau_0$ and so we let $\tau=T$. Note that $E$ is still free, so we have just expressed our wish to set $\tau=E$ without really fixing $\tau$.  

\item \underline{Choosing $\de_0$:} Choose $\de_0=\de_0(M,\delta,\lambda)$ as in the Theorem \ref{main_estimate}.

\item \underline{Choosing $\theta$:} Set $\theta=80000nc>20$.

\item \underline{Choosing $\beta$:} Choose $\beta$ such that the following conditions hold:
\begin{enumerate}
\item $\beta<\beta_0(\alpha,\delta/8)$ of Lemma \ref{interpolation}.
\item $\beta<\beta_0(n,\alpha,\delta/4)$ of Lemma \ref{time_scales}.
\item $\sqrt{E}=\sqrt{2\alpha\beta} < \de_0/2$ (so that Theorem \ref{main_estimate} will hold, see \ref{app_est}).
\item $2\theta E = 4\alpha\beta\theta<\min\{10^{-6},\Lambda^{-1}\}$ (to comply with the statement).
\item $\alpha^3\beta<\frac{1}{640}$ (to comply with the statement).
\end{enumerate}
As all those conditions want $\beta$ to be small, they can be satisfied simultaneously.

\item \underline{Choosing $\de$:} We finally set $\de=\sqrt{E}$. 
\end{enumerate}

\item \underline{Initial bounds:} By the choice of $\beta$, we know that $(\alpha,\beta,\delta/8)$ satisfy the conditions of Lemma \ref{interpolation}. Therefore, we get that  the connected component of $p$ in $Y \cap \left(B^n(p,(1-\delta/4))\times [-\beta ,\beta ]\right)$ is a graph of a function $u$ over $B^n(p,(1-\delta/4))$ with
\begin{equation}\label{lem_est_1}
|\nabla u| \leq \frac{3}{2}\sqrt{2\beta\alpha}=\frac{3}{2}\sqrt{E}.
\end{equation}

\item \underline{Obtaining bounds for long positive times:} By Lemma \ref{time_scales}, if \newline$\beta<\beta_0(n,\alpha,\delta/4)$, $u$ remains a function on  $B^n\left(p,\left(1-\frac{\delta}{4}\right)\right)\times [0,T]$ where the following estimate holds
\begin{equation}
|\nabla u(y,t)| \leq M\sqrt{E}, \;\;\;\;\;\; |u(y,t)| \leq M^2\beta.
\end{equation}

\item \underline{Obtaining bounds for small positive times}: According to the second part of Lemma \ref{time_scales}, for $t\in [0,\lambda T]$, $Y_t$  satisfies the conditions of Lemma \ref{interpolation} and so we obtain that for those times, for  $y\in B^n(p,(1-\delta/2))$
\begin{equation}
|\nabla u(y,t)| \leq 2\sqrt{E}, \;\;\;\;\;\;\;\; |u(y,t)| \leq 2\beta. 
\end{equation}

\item\label{app_est} \underline{Applying Theorem \ref{main_estimate}; estimating the curvature:} By the first estimate of Theorem \ref{main_estimate} we have that for every $t\in[0,E]$ and $y\in Y_t$ 

\begin{equation}\label{good_est}
|A(y,t)|\leq \frac{2\sqrt{E}}{\sqrt{t}}
\end{equation}
and  by the second estimate of Theorem \ref{main_estimate} and by our choice of $\alpha$ and $\delta$, at the final time $T$ we have:
\begin{equation}\label{alph_p_ref}
|A(y,T)| \leq \frac{2c\beta}{E}+2\delta\frac{\sqrt{E}}{\sqrt{E}}=\frac{1}{200\sqrt{n}}:=\alpha'.
\end{equation}

\item\underline{Estimating the motion:}\label{mot_est} The curvature bound \eqref{good_est} implies the mean curvature bound:
\begin{equation}
|H(y,t)| \leq \frac{2\sqrt{n}\sqrt{E}}{\sqrt{t}}
\end{equation}
which can be integrated to obtain the motion bound:

\begin{equation}\label{beta_p_ref}
d(\phi_0(y),\phi_T(y)) \leq 4\sqrt{n}E:=\beta'
\end{equation}

\item \underline{Conclusion:}  Note that by our choice of $\theta$ and by \eqref{alph_p_ref},\eqref{beta_p_ref}
\begin{equation}
|A(y,T)| \leq \alpha' =\frac{\alpha}{\theta} 
\end{equation}
and
\begin{equation}   
\alpha'\beta'=\frac{1}{50}E<\frac{1}{20}\alpha\beta
\end{equation}
which implies
\begin{equation}
d(\phi_0(y),\phi_T(y)) \leq \beta'  \leq \frac{\beta}{20}\theta \;\;\;\;\Box.
\end{equation}
\end{enumerate}

\begin{remark} 
Note that the motion estimate in step 6  illustrates why we needed part (B) of our main estimate, Theorem \ref{main_estimate}. Using the optimal form of part (A) of \ref{main_estimate} up to time $T$, we obtain that, in the language of step 6, for $t\in [0,T]$:
\begin{equation}
|A(y,t)| \leq \frac{1}{\sqrt{\pi}}\frac{\sqrt{E}}{\sqrt{t}}
\end{equation} 
\begin{equation}
|H(y,t)| \leq \sqrt{n}\frac{1}{\sqrt{\pi}}\frac{\sqrt{E}}{\sqrt{t}}
\end{equation}
and thus by integration
\begin{equation}
d_H(Y_0,Y_T) \leq 2\sqrt{n}\frac{1}{\sqrt{\pi}}E.
\end{equation}
Thus, for the corresponding $\alpha',\beta'$ we will necessarily  have $\alpha'\beta'>\alpha\beta$ which will preclude us from iterating, this time with $Y_T$ at scale $\theta$ (see also section \ref{uniform_sec}).
\end{remark}

\subsection{Uniform Estimates}\label{uniform_sec}

Before proving Theorem \ref{main_thm2} we need the following elementary lemma about restricting the triangle inequality to balls.
\begin{lemma}\label{trian_ball}
Suppose $X$ is a $(\beta/10,R)$-Reifenberg set and let $Y$ be a compact set such that for some $r<R$
\begin{equation}
d_H(X,Y) \leq \frac{\beta}{10}r.
\end{equation}
Then for every $y_0\in Y$ there exists a hyperplane $P$ passing through $y$ such that
\begin{equation}
d_H\left(B(y_0,(1-\beta/5)r)\cap P,B(y_0,(1-\beta/5)r)\cap Y\right) \leq \frac{3}{5}\beta r.
\end{equation}
\end{lemma}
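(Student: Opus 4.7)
The plan is to obtain $P$ as a parallel translate of a Reifenberg plane of $X$ at a nearby point of $X$, and then to verify both sides of the restricted Hausdorff inequality by chaining triangle inequalities through $X$ and the Reifenberg plane $P_0$, using orthogonal projections to address the restriction to the ball.

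First, I would use the assumption $d_H(X,Y)\leq \beta r/10$ to pick $x_0\in X$ with $|x_0-y_0|\leq \beta r/10$, and then apply Reifenberg flatness at $x_0$ at scale $r$ to obtain a hyperplane $P_0$ with
$d_H(B(x_0,r)\cap X,B(x_0,r)\cap P_0)\leq \beta r/10$. Define $P$ to be the hyperplane through $y_0$ parallel to $P_0$. Combining $d(x_0,P_0)\leq \beta r/10$ with $|x_0-y_0|\leq \beta r/10$ gives $d(P,P_0)=d(y_0,P_0)\leq \beta r/5$.

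For the first direction, given $y\in Y\cap B(y_0,(1-\beta/5)r)$, find $x\in X$ with $|x-y|\leq \beta r/10$; the triangle inequality places $x\in B(x_0,r)$, so Reifenberg yields $q\in P_0$ with $|x-q|\leq \beta r/10$, and orthogonally translating $q$ to $P$ moves it by at most $\beta r/5$. Therefore $y$ lies within $2\beta r/5$ of $P$, and the orthogonal projection $p^*$ of $y$ onto $P$ achieves $|y-p^*|\leq 2\beta r/5$; since projection onto a hyperplane through $y_0$ contracts distance to $y_0$, we conclude $p^*\in P\cap B(y_0,(1-\beta/5)r)$. For the other direction, given $p\in P\cap B(y_0,(1-\beta/5)r)$, let $p'$ be the orthogonal projection of $p$ onto $P_0$, so $|p-p'|\leq \beta r/5$. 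Decomposing $y_0-x_0=v_{||}+v_\perp$ into components parallel and perpendicular to $P_0$ (each of norm at most $\beta r/10$), one verifies that $p'-x_0=(p-y_0)+v_{||}$ with both summands parallel to $P_0$, whence $|p'-x_0|\leq (1-\beta/5)r+\beta r/10<r$. Thus $p'\in P_0\cap B(x_0,r)$, so Reifenberg gives $x\in X\cap B(x_0,r)$ with $|x-p'|\leq \beta r/10$, and the global Hausdorff bound gives $y\in Y$ with $|y-x|\leq \beta r/10$; summing, $|y-p|\leq 2\beta r/5$.

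The main obstacle is boundary handling: the $y$ produced in the second direction might land just outside $B(y_0,(1-\beta/5)r)$, whereas the restricted Hausdorff distance demands a nearby $Y$-point still inside the ball. This is repaired by first sliding $p$ radially inward toward $y_0$ to an auxiliary point $p^*$ in a slightly smaller concentric ball, running the above construction at $p^*$ to produce $y\in Y\cap B(y_0,(1-\beta/5)r)$ within $2\beta r/5$ of $p^*$, and then combining through the triangle inequality; the slack between the intrinsic bound $2\beta r/5$ and the target $3\beta r/5$ provides the room needed for this radial adjustment once the slide is optimized. Once this boundary issue is pinned down, the rest of the argument is just careful bookkeeping with the triangle inequality.
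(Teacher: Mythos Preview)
Your strategy matches the paper's exactly: pick $x_0\in X$ with $|x_0-y_0|\le\beta r/10$, take $P$ to be the translate through $y_0$ of the Reifenberg plane at $x_0$, and chain triangle inequalities through $X$ and that plane, with an inward radial slide to keep the produced point inside the restricted ball. Your first direction, projecting $y$ orthogonally onto $P$ and using that projection onto a plane through $y_0$ contracts distance to $y_0$, is in fact slightly cleaner than the paper's version.

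The one quantitative slip is in your bound $d(P,P_0)\le\beta r/5$. The paper takes its Reifenberg plane $\bar P$ through $x_0$ itself (harmless, since $d(x_0,P_0)\le\beta r/10$), so that $d(P,\bar P)\le|y_0-x_0|\le\beta r/10$. With your looser $\beta r/5$, the second direction gives $|y-p^*|\le 2\beta r/5$, and to force $y\in B(y_0,(1-\beta/5)r)$ you must slide by $s\ge 2\beta r/5$, yielding $|y-p|\le 4\beta r/5$; the slack between $2\beta r/5$ and $3\beta r/5$ is only $\beta r/5$, which is not enough. Translating $P_0$ through $x_0$ first (so $d(P,P_0)\le\beta r/10$ and $|y-p^*|\le 3\beta r/10$) recovers the stated $3\beta r/5$.
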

\proof Take $x_0\in X$ with 
\begin{equation}d(x_0,y_0) \leq \frac{\beta}{10} r
\end{equation}
and choose a plane $\bar{P}$ passing through $x_0$ such that
\begin{equation}
d_H\left(B(x_0,r)\cap \bar{P},B(x_0,r)\cap X\right) \leq \frac{\beta}{10}r.
\end{equation}
Let $P$ be the plane parallel to $\bar{P}$ passing through $y_0$. Now, taking any \newline$y\in B\left(y_0,(1-\beta/5)r\right)$ and $x$ with \begin{equation}
d(x,y)\leq \frac{\beta}{10} r,
\end{equation} 
we see that $x\in B(x_0,r)$ and so there is a point $\bar{p}\in \bar{P}\cap B(x_0,r)$ with 
\begin{equation}
d(\bar{p},y) \leq \frac{\beta}{5}r.
\end{equation}
Moving a bit inward, this implies that there exist $\bar{p}_1 \in \bar{P}\cap B(x_0,(1-3\beta/10)r)\subseteq B(y_0,(1-\beta/5)r)$ with 
\begin{equation}d(\bar{p}_1,y)\leq \frac{5}{10}\beta r
\end{equation}
 and so taking $p\in P$ closest to $\bar{p}_1$ we obtain $p\in P \cap B(y_0,(1-\beta/5)r)$ with 
\begin{equation}
d(p,y)\leq \frac{3}{5}\beta r.
\end{equation} 

On the other direction, given $p\in P \cap B(y_0,(1-\beta/5)r)$ and $\bar{p}_1 \in \bar{P}\cap B(x_0,(1-\beta/5)r) $ closest to $p$, we can move inward to find $\bar{p}\in  B(x_0,(1-5\beta/10)r)\cap \bar{P}$ with 
\begin{equation}
d(\bar{p},p) \leq \frac{2}{5}\beta r.
\end{equation}
Choosing $x\in X$ with 
\begin{equation}
d(x,\bar{p})\leq \frac{1}{10}\beta r
\end{equation}
 and $y\in Y$ with 
\begin{equation}
d(x,y)\leq \frac{1}{10}\beta r
\end{equation}  
we get 
\begin{equation}
d(p,y) \leq \frac{3}{5}\beta r
\end{equation} 
and $y\in B(x_0,(1-3\beta/10)r) \subseteq B(y_0,(1-\beta/5)r)$ $\Box$.

\vspace{5 mm}      

\noindent We are now in a position to prove Theorem \ref{main_thm2}.
\proof[Proof of Theorem \ref{main_thm2}] Fix $\alpha,\beta,,\theta,E$ as in Lemma \ref{main_lemma}, let $r<R$ and recall that $\theta E<10^{-6}$. Choose $\de>0$ sufficiently small so that:
\begin{enumerate}
\item $c_1\de<\frac{\beta}{10}$.
\item $\de < \frac{\beta}{10}$.
\item $c_2\de < \alpha$.
\end{enumerate} 
Here $c_1,c_2$ are the constants from Theorem \ref{aux_reif_thm}. Letting $Y=X^r$, with the above choice of $\de$ we have:
\begin{enumerate}[label=\Alph*$(r)$.]
\item $|A(y)| \leq \frac{\alpha}{r}\leq \frac{\alpha}{(1-\beta/5)r}$ for every $y\in Y$.
\item $d_H(Y,X) \leq \frac{\beta}{10}r$.
\item By Lemma \ref{trian_ball}, for evert $y\in Y$ there exists a plane $P$ such that
\begin{equation}
d_H(B(y,(1-\beta/5)r)\cap P,B(y,(1-\beta/5) r)\cap Y) \leq \frac{3}{5}\beta r \leq \beta(1-\beta/5)r.
\end{equation}  
\end{enumerate}
Conditions A$(r)$ and C$(r)$ allow us to apply Lemma \ref{main_lemma}, so we can let $Y$ flow smoothly for time $T=E(1-\beta/5)^2r^2$. Moreover, we have:
\begin{enumerate}[label=\Alph*$(\theta r$).]
\item For every $y\in Y_T$
\begin{equation}
|A(y)| \leq  \frac{\alpha}{(1-\beta/5)(\theta r)}. 
\end{equation}
\item For every $y\in Y$ 
\begin{equation}
d(\phi_0(y),\phi_T(y)) \leq  \frac{\beta}{20} (\theta(1-\beta/5) r)
\end{equation}
so by condition $B(r)$ and the fact that $\theta>20$: 
\begin{equation}
d_H(X,Y_T) \leq \frac{\beta}{10}r+\frac{\beta}{20} (\theta(1-\beta/5) r) \leq \frac{\beta}{20}(\theta r)+\frac{\beta}{20} (\theta(1-\beta/5) r) < \frac{\beta}{10}(\theta r).
\end{equation}
\item  If $\theta r <R$, we can use Lemma \ref{trian_ball} to obtain
\begin{equation}
d_H(B(y,(1-\beta/5)\theta r)\cap P ,B(y, (1-\beta/5)\theta r)\cap Y_T) \leq \frac{3}{5}\beta(\theta r) \leq \beta(1-\beta/5)(\theta r).
\end{equation} 
\end{enumerate}
Thus, we conclude that $Y_T$ satisfies the same conditions as $Y$, this time with respect to the scale $\theta r$, so we can restart the process with $Y_T$ instead of $Y$ at scale $\theta r$ instead of $r$ and iterate. The iterations will be performed at times
\begin{equation}
t_k=\left(1-\frac{\beta}{5}\right)^2(1+\theta^2+\ldots +\theta^{2(k-1)})Er^2
\end{equation}
 starting from $k=1$, and we will be able to proceed with the iteration for as long as $\theta^kr<\frac{R}{4}$. Thus, at the last step we would have $\frac{R}{4\theta} \leq \theta^kr < \frac{R}{4}$  and as
\begin{equation}\label{time_scale_comp}
\frac{1}{2}\theta^{2(k-1)}Er^2 \leq t_k \leq \frac{\theta^{2k}Er^2}{16}  
\end{equation}
 (since $\theta>20$) this gives existence for time duration $c_0R^2$  with
\begin{equation}
c_0=\frac{E}{32\theta^4}.
\end{equation} 
Using the second inequality of \eqref{time_scale_comp}, the estimates above imply that
\begin{equation}
|A(t_k)| \leq \frac{2\alpha}{\theta^{k}r} \leq \frac{\alpha\sqrt{E}}{\sqrt{t_k}}. 
\end{equation}
Similarly, letting $(\phi_t)_{t\in [0,c_0R^2]}$ be the flow starting from $Y$, using the first inequality of \eqref{time_scale_comp} we have
\begin{equation}\label{mot_par}
d(\phi_{t_k}(y),\phi_0(y))\leq \frac{\beta}{2} \theta^{k}r \leq \frac{\beta\theta\sqrt{t_k}}{\sqrt{E}} 
\end{equation}
and in particular
\begin{equation}
d_H(Y_{t_k},X)\leq \frac{\beta\theta\sqrt{t_k}}{\sqrt{E}}+\frac{\beta}{2}r \leq \frac{2\beta\theta\sqrt{t_k}}{\sqrt{E}}. 
\end{equation} 

\vspace{5 mm}
The above calculations are only valid for the iteration times, but choosing $\de$ sufficiently small, we can make $A(s),B(s),C(s)$ hold for all $Y_{t}$ with $t\in [0,Er^2)$ and $r \leq s \leq \theta r$. Thus, we get that for every $t\in [2Er^2 , c_0R^2]$  we have
\begin{equation}
|A(t)| \leq \frac{c_1}{\sqrt{t}}
\end{equation}
and
\begin{equation}
d_H(Y_{t},X)\leq c_2\sqrt{t}. 
\end{equation} 
with $c_1=2\alpha\sqrt{E}$ and $c_2=\frac{2\beta\theta}{\sqrt{E}}$. Note that by Lemma \ref{main_lemma}, $c_1^2<\frac{1}{80}$ and $c_1c_2<\min(10^{-6},\Lambda^{-1})$.

\vspace{5 mm}

\noindent For the last part of Theorem \ref{main_thm2}, fix $c_3=4\alpha^2E$, let $t\in [c_3r^2,c_0R^2]$ and $s\in(\frac{\sqrt{t}}{2\alpha\sqrt{E}},\frac{R}{4})$.  Since $s\geq\frac{\sqrt{t}}{2\alpha\sqrt{E}}\geq r$, by the third condition of Theorem \ref{aux_reif_thm}, for every $x\in X$ one can write
\begin{equation}
B(x,s)\cap X^r_0=G\cup B
\end{equation}
where $G$ is connected and $B\cap B(x,(1-20\de)s)= \emptyset$, and as $\de<\frac{\beta}{10}<\frac{E}{10}<\frac{\theta E}{10}$, $B\cap B(x,(1-2\theta E)s)=\emptyset$. On the other hand, note that by \eqref{mot_par}, we have the motion bound
\begin{equation}
d(\phi_{t}(y),\phi_0(y))\leq  \frac{2\theta\beta\sqrt{t}}{\sqrt{E}}=\frac{\theta E\sqrt{t}}{\alpha\sqrt{E}}\leq 2\theta E s
\end{equation}
so we obtain that 
\begin{equation}
B(x,s)\cap X^r_t=G_t\cup B_t
\end{equation}
where $G_t$ is connected and $B_t\cap B(x,(1-4\theta E)s)= \emptyset$. In particular $B_t\cap B(x,\frac{9}{10}s)= \emptyset$ 
$\Box$.
 
\vspace{5 mm}

Having Theorem \ref{main_thm2} established, the existence parts of Theorem \ref{real_main_thm} follow easily. 
\begin{theorem}\label{smooth_ws}
With the $\de,c_0,c_1,c_2,c_3$ of Theorem \ref{main_thm2}, if $X$ is an $(\de,R)$-Reifenberg set, there exists a smooth MCF $(X_t)_{t\in(0,c_0R^2)}$ satisfying
\begin{equation}
\lim_{t\rightarrow 0}d_H(X_t,X)=0.
\end{equation}
Moreover, setting $X_0=X$, this solution is also a weak set flow (see Definition \ref{ls_flow}) and a physical solution (see Definition \ref{app_phys_sol}).  
\end{theorem}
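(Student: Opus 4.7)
The plan is to obtain $(X_t)_{t\in(0,c_0R^2)}$ as a smooth subsequential limit of the family of approximating mean curvature flows $(X^r_t)$ produced by Theorem~\ref{main_thm2}, and then verify each claim (smoothness, Hausdorff attainment of $X$, weak set flow, physicality) by passing to the limit.

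First, for each $0<r<R/4$, Theorem~\ref{main_thm2} yields a smooth MCF $(X^r_t)_{t\in[0,c_0R^2]}$ satisfying $|A^r(t)|\leq c_1/\sqrt{t}$ on $[c_3r^2,c_0R^2]$. Fix $t_0\in(0,c_0R^2)$: for all $r$ with $c_3r^2\leq t_0/2$, the family $\{X^r_t:t\in[t_0,c_0R^2]\}$ has uniformly bounded second fundamental form, and Theorem~\ref{EH_curv} upgrades this to uniform bounds on all derivatives of $A$ on slightly interior time intervals. Standard smooth compactness for MCFs (via Arzel\`a-Ascoli on local graphical parametrizations) then provides a sequence $r_k\to 0$ and a smooth mean curvature flow $(X_t)$ on $[t_0,c_0R^2]$ to which $X^{r_k}_t$ converges locally smoothly. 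A diagonal argument in $t_0\to 0$ yields a smooth MCF $(X_t)_{t\in(0,c_0R^2)}$.

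Hausdorff attainment of the initial datum is immediate from \eqref{uni_est_dist}: passing to the limit in $r$ gives $d_H(X_t,X)\leq c_2\sqrt{t}$ on $(0,c_0R^2)$, hence $d_H(X_t,X)\to 0$ as $t\to 0$. The connectedness alternative \eqref{uni_est_conn} passes to the limit as well, which guarantees the limit is not empty and in fact produces the full decomposition $B(x,s)\cap X_t=G\cup B$ with $B\cap B(x,9s/10)=\emptyset$ for every $x\in X$ and $s\in(\sqrt{t}/c_1,R/4)$; this is precisely the physicality assertion from Definition~\ref{app_phys_sol}. The same compactness gives $|A(t)|\leq c_1/\sqrt{t}$ for all $t\in(0,c_0R^2)$, completing physicality.

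Finally, for the weak set flow property, let $(\Delta_s)_{s\in[t_0,t_1]}$ be a smooth compact MCF with $\Delta_{t_0}\cap X_{t_0}=\emptyset$. When $t_0>0$, the smooth convergence $X^{r_k}_{t_0}\to X_{t_0}$ gives $\Delta_{t_0}\cap X^{r_k}_{t_0}=\emptyset$ for $k$ large, the classical avoidance principle applied to the two smooth flows $(\Delta_s)$ and $(X^{r_k}_s)$ yields disjointness throughout $[t_0,t_1]$, and smooth convergence preserves this in the limit. When $t_0=0$, compactness of $\Delta_0$ and $X$ gives $d(\Delta_0,X)\geq 2\eta>0$; since $d_H(X^r,X)\leq c_1\de r$ and $d_H(X_s,X)\leq c_2\sqrt{s}$ are small, for small $s$ we have $\Delta_s\cap X^{r_k}_s=\emptyset$ for large $k$, and the previous case covers $s\geq s_0>0$. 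The main technical concern is making the smooth compactness genuinely local and uniform under the moving scales; this is exactly what the curvature bound \eqref{uni_est_curv} together with the connectedness alternative \eqref{uni_est_conn} are designed to provide, since the latter prevents pieces of $X^r_t$ from either collapsing or escaping on the scales relevant to producing a limiting hypersurface.
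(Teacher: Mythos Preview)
Your proposal is correct and follows essentially the same approach as the paper: Arzel\`a--Ascoli compactness from the uniform estimates of Theorem~\ref{main_thm2} (upgraded to higher derivatives) gives the smooth limit, the estimates \eqref{uni_est_curv}--\eqref{uni_est_conn} pass to the limit to yield Hausdorff attainment and physicality, and the weak set flow property is verified by applying the classical avoidance principle to the smooth approximating flows $X^{r_k}_t$ and passing to the limit. The paper makes one small simplification you could adopt: since $(X_t)_{t\in(0,c_0R^2)}$ is already a smooth mean curvature flow, the classical avoidance principle handles any comparison flow $(\Delta_t)$ starting at $t_0>0$ directly, so only the case $t_0=0$ needs to be checked via the approximations $X^r$.
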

\proof Using the estimates of Theorem \ref{main_thm2} and the standard estimates for the higher derivatives of $A$, the existence of a smooth solution $(X_t)_{t\in (0,c_0R^2]}$ follows immediately by an Arzel\`{a}-Ascoli argument. Since the flow $(X_t)_{t\in (0,c_0R^2)}$ is smooth, we only need to check avoidance with respect to smooth flows starting at time $0$. Take a smooth flow $(\Delta_t)_{t\in [0,T]}$ for $T\leq c_0R^2$ with 
\begin{equation}
d_H(\Delta_0,X)=\delta>0.
\end{equation} 
For $r$ sufficiently small we have 
\begin{equation}
d_H(X^r,\Delta_0)\geq \frac{\delta}{2}
\end{equation}
so by avoidance we will also have 
\begin{equation}
d_H(X^r_t,\Delta_t)\geq \frac{\delta}{2}.
\end{equation}
As $X_t=\lim_{j\rightarrow \infty}X^{r_j}_t$ with $r_j \rightarrow 0$ as $j\rightarrow \infty$ this implies 
\begin{equation}
X_t\cap \Delta_t=\emptyset
\end{equation} 
$\Box$.

\section{Uniqueness}\label{uniq_sec}
In this section we will  conclude the proof of Theorem \ref{real_main_thm}. In Section \ref{graph_rep_sec} we will see that at any positive time, the approximating flows of small enough scales can be written as graphs over one another. In Section \ref{graph_pde_sec} we will derive a PDE for such a situation which will be used to obtain the separation estimate, Theorem \ref{main_thm3} in Section \ref{uniq_lim_sec}. Finally, we will conclude the proof of Theorem \ref{real_main_thm}  in Section \ref{wf_sec} (assuming the interior estimate, Theorem  \ref{main_estimate}, which will in turn be proved in Section \ref{est_sec}). Throughout this section we will assume freely that the parameters $\alpha,\beta,\theta,c_0$ satisfy the inequalities  of Theorem \ref{main_thm2}.  

\subsection{Graph Representation}\label{graph_rep_sec} 

\noindent Recall that $\left(Y^r_t\right)_{t\in(0,c_0R^2)}$ is called a ($\Lambda$) $r$-approximate evolution of $X$ if for $t\in [c_3r^2,c_0R^2]$
\begin{equation}\label{app_1}
|A^r(t)| \leq \frac{c_1}{\sqrt{t}},
\end{equation}
\begin{equation}\label{app_2}
d_H(Y^{r}_t,X) \leq c_2\sqrt{t},
\end{equation}
and for any $t\in [c_3r^2,c_0R^2]$ and $s\in (\frac{\sqrt{t}}{c_1},R/4)$ and $x\in X$ we have:
\begin{equation}\label{app_3}
B(x,s)\cap Y^r_t=G\cup B
\end{equation}
where $G$ is connected and $B\cap B(x,\frac{9}{10}s)=\emptyset$. 
We will always assume that the parameters satisfy the relations of Theorem \ref{main_thm2}. To be more precise, we assume:
\begin{enumerate}\label{struct_ineq}
\item $c_1c_2<\min\{10^{-6},\Lambda^{-1}\}$.
\item $c_1^2<\frac{1}{80}$.
\end{enumerate}

\vspace{5 mm}

\noindent We start by making the following two observations.

\begin{lemma}\label{taub_lemma}
If $\left(Y^r_t\right)_{t\in (0,c_0R^2)}$ is an $r$-approximate evolution of $X$ then for every $t\in[c_3r^2,c_0R^2]$
\begin{equation}
\mathrm{inj}(Y^r_t)\geq \min\left(\frac{\sqrt{t}}{4c_1},\frac{1}{4}R\right).
\end{equation}
 Here $\mathrm{inj}$ denotes the injectivity radius of the normal exponential map. In particular, $Y^{s}_t$ is contained in a tubular neighborhood of $Y^{r}_t$  for every $s\leq r$.
\end{lemma}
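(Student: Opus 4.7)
I would prove the two assertions separately, deriving the ``in particular'' clause as a short consequence of the injectivity radius bound. For the tubular neighborhood statement: by \eqref{app_2} and the triangle inequality, $d_H(Y^r_t,Y^s_t)\leq 2c_2\sqrt{t}$, and the structural inequality $c_1c_2<10^{-6}$ makes this $\leq 8\cdot 10^{-6}\cdot\sqrt{t}/(4c_1)$, which is much less than any plausible lower bound on $\mathrm{inj}(Y^r_t)$. So once the injectivity radius bound is in place, each point of $Y^s_t$ is so close to $Y^r_t$ that it sits in its tubular neighborhood.

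The main content is the injectivity radius bound, which I would prove by contradiction. Assume there exist $y_1\neq y_2\in Y^r_t$ and $|s_1|,|s_2|<\rho:=\min(\sqrt{t}/(4c_1),R/4)$ with $y_1+s_1N(y_1)=y_2+s_2N(y_2)$, so $|y_1-y_2|\leq 2\rho$. The curvature bound $|A|\leq c_1/\sqrt{t}$ combined with $\rho\leq \sqrt{t}/(4c_1)$ shows $|s_i||A|\leq 1/4$, so the normal exponential map is a local diffeomorphism near each $y_i$; the issue is therefore global, i.e.\ $y_1$ and $y_2$ must live on distinct ``sheets''. To rule this out I invoke \eqref{app_3}. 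Pick $x\in X$ with $|x-y_1|\leq c_2\sqrt{t}$, which by $c_1c_2<10^{-6}$ is tiny compared to $\sqrt{t}/c_1$, so both $y_1,y_2$ lie well inside $B(x,(1/2+10^{-6})\sqrt{t}/c_1)$. Choose $s=(10/9)\sqrt{t}/c_1$; this lies in $(\sqrt{t}/c_1,R/4)$ in our parameter regime, and \eqref{app_3} forces $y_1,y_2$ to lie in the connected good component $G$ of $Y^r_t\cap B(x,s)$.

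Next, I would turn $G$ into a single graphical sheet. The Reifenberg property gives a hyperplane $P$ with $d_H(P\cap B(x,s),X\cap B(x,s))\leq \de s$, and combining with \eqref{app_2} yields $d_H(G\cap B(x,9s/10),P\cap B(x,9s/10))\leq (\de+c_1c_2)s$. On the scale $s=(10/9)\sqrt{t}/c_1$ the normalized curvature is $\alpha=O(1)$, so the Interpolation Lemma~\ref{interpolation} presents $G$, restricted to a slightly smaller cylinder about $x$, as a graph of a function $u$ over $P$ with $|\nabla u|\leq\sqrt{2\alpha(\de+c_1c_2)}$, which is as small as we like. In this graphical picture an elementary computation with $\Phi(z,s)=(z-s\nabla u(z)/w,u(z)+s/w)$, using $|s||A|<1/4$ and $|\nabla u|$ small, shows that $\Phi$ is injective on $\{|s|<\sqrt{t}/(2c_1)\}$: if $\Phi(z_1,s_1)=\Phi(z_2,s_2)$ one gets $|z_1-z_2|\leq 2\rho|\nabla u|$ and $|s_1-s_2|\leq 2\rho|\nabla u|^2$, putting the two preimages inside the local diffeomorphism region and forcing equality. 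This contradicts $y_1\neq y_2$.

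\textbf{Anticipated obstacle.} The delicate step is the final graphical injectivity: the curvature bound alone gives only local injectivity of the normal exponential map, while condition \eqref{app_3} only rules out disconnected bad sheets near $x$. One must combine them (through the interpolation lemma) into a single graphical representation of $G$ on a ball large enough that the extrinsic distance $2\rho$ between $y_1$ and $y_2$ falls strictly inside the graphical region. Keeping track of constants through the chain ``Reifenberg $\to$ Hausdorff closeness to a plane $\to$ small gradient $\to$ large tubular radius'' is what the structural inequalities $c_1c_2<10^{-6}$ and $c_1^2<1/80$ are designed to make routine, and I would use them freely at the end to absorb all multiplicative constants into the final factor $1/4$.
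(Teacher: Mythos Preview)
Your proposal is correct and follows the same overall strategy as the paper: argue by contradiction, use the curvature bound \eqref{app_1} to rule out focal points within distance $\sqrt{t}/c_1$, invoke \eqref{app_3} to place both nearby points $y_1,y_2$ in the connected good piece $G$, and then extract a geometric contradiction from the structure of $G$.

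The difference lies in how the final contradiction is obtained. The paper uses the classical cut--locus characterization: below the focal radius, a cut point is equidistant from two footpoints $y_1,y_2$ with $(y_1-y_2)\perp T_{y_i}Y^r_t$ for $i=1,2$. With this perpendicularity already in hand, the curvature bound on the ball $B(x,\sqrt{t}/c_1)$ directly forces the sheet through $y_1$ to exit the ball before it can bend back to $y_2$---no auxiliary plane or graph representation is needed. Your route instead pulls in the Reifenberg plane $P$ approximating $X$, combines it with \eqref{app_2}, and applies the Interpolation Lemma~\ref{interpolation} to realize $G$ as a small-gradient graph over $P$; injectivity of the normal exponential map is then checked by an explicit coordinate computation (which, to close, implicitly uses the Lipschitz bound on $\nabla u$ coming from $|A|\leq c_1/\sqrt t$, i.e.\ the very condition $|s||A|<1/4$ you flagged). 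Your version is more explicit and self-contained at the cost of invoking more machinery; the paper's is shorter because the perpendicularity from the cut--locus description already aligns the problem with the tangent plane of $Y^r_t$ itself, bypassing the detour through $X$ and~$P$.
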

\proof Set $\rho$ to be the distance function from $Y^r_t$. The curvature bound \eqref{app_1} imply that there are no focal points with $\rho \leq \frac{\sqrt{t}}{c_1}$. By the characterization of the injectivity radius we know that if $p$ is a cut point with $\rho(p)=\mathrm{inj}(Y^r_t)\leq\frac{\sqrt{t}}{4c_1}$ then there exist $y_1,y_2\in Y^r_t$ such that 
\begin{equation}\label{close_points_1}
d(y_1,y_2) \leq \frac{\sqrt{t}}{2c_1}
\end{equation}
and such that 
\begin{equation}\label{par_tan}
(y_1-y_2)\perp T_{y_i}Y^{r}_t
\end{equation}
for $i=1,2$. Fix $x\in X$ such that 
\begin{equation}
d(x,y_1)\leq c_2\sqrt{t}=\frac{c_1c_2\sqrt{t}}{c_1} <10^{-6}\frac{\sqrt{t}}{c_1} 
\end{equation}
and consider the intersection $B(x,\frac{\sqrt{t}}{c_1})\cap Y^r_t=G\cup B$ where the splitting is by \eqref{app_3}.  By \eqref{close_points_1} we know that $y_1,y_2\in G$ and since $y_1$ is very close to $x$ the curvature bound \eqref{app_1} together with the perpendicularity of the tangent spaces \eqref{par_tan} will force the connected component of $y_1$ to leave the ball before being able to return to $y_2$, which is a contradiction $\Box$.

\vspace{5 mm}

\noindent In fact, we even have the following.

\begin{lemma}
For $t\in [c_3r^2,c_0R^2]$ and $s\leq r$, $Y^{s}_t$ is a  a graph of a function $u$ over $Y^{r}_t$. By that we mean that
\begin{equation}
Y^{s}_t=\{y+u(y,t)N(y,t)\;|\;y\in Y^r_t\}.
\end{equation}
where $N(y,t)$ is the normal to $Y^r_t$ at $y$.
\end{lemma}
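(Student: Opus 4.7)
The plan is to show that the nearest-point projection $\pi:Y^s_t\to Y^r_t$ is a diffeomorphism, from which the desired function $u(\cdot,t)$ is read off as the signed normal coordinate of $\pi^{-1}$.

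First, I would verify that $\pi$ is well-defined. The triangle inequality applied to \eqref{app_2} gives $d_H(Y^r_t,Y^s_t)\leq 2c_2\sqrt{t}$, which by the structural inequality $c_1c_2<10^{-6}$ is dwarfed by the tubular-neighborhood width $\sqrt{t}/(4c_1)$ supplied by the previous lemma. Hence each $z\in Y^s_t$ has a unique nearest point $\pi(z)\in Y^r_t$, with $z-\pi(z)=u(z)N(\pi(z))$ and $|u(z)|\leq 2c_2\sqrt{t}$.

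Next, I would establish local graphicality of $Y^s_t$ over the tangent plane $T_yY^r_t$ at a scale $\rho\sim\sqrt{c_2/c_1}\,\sqrt{t}$, chosen so that $2c_2\sqrt{t}\ll\rho\ll\sqrt{t}/c_1$. Fix $y\in Y^r_t$, pick $x\in X$ with $d(x,y)\leq c_2\sqrt{t}$, and apply \eqref{app_3} to $Y^s_t$ at scale $\sqrt{t}/c_1$ around $x$: writing $Y^s_t\cap B(x,\sqrt{t}/c_1)=G\cup B$, the ball $B(y,\rho)$ sits safely inside $B(x,9\sqrt{t}/(10c_1))$, so $Y^s_t\cap B(y,\rho)\subseteq G$ lies in one connected piece. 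Taking $p\in Y^s_t\cap B(y,2c_2\sqrt{t})$ (existence by the Hausdorff bound), I would invoke Lemma \ref{interpolation} with the plane through $p$ parallel to $T_yY^r_t$; combining $|A^s|\rho\leq c_1\rho/\sqrt{t}$ with the facts that $Y^r_t$ is within $\lesssim c_1\rho^2/\sqrt{t}$ of $T_yY^r_t$ (via its own curvature bound) and that $Y^s_t$ is within $2c_2\sqrt{t}$ of $Y^r_t$, both $\alpha$ and $\beta$ in the interpolation lemma come out as $O(\sqrt{c_1c_2})$, yielding a graphical representation of the component of $p$ over a neighborhood of the footpoint $y$ in $T_yY^r_t$ with gradient of size $O(\sqrt{c_1c_2})\ll 1$.

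With this local graph representation in hand, I would conclude as follows. The smallness of the gradient makes $T_zY^s_t$ uniformly transverse to $N(\pi(z))$, so $d\pi_z$ is an isomorphism and $\pi$ is a local diffeomorphism. Every element of $\pi^{-1}(y)$ lies in $B(y,2c_2\sqrt{t})\subset B(y,\rho)$, and two distinct such elements would correspond to two values of the graph function above the common footpoint $y$, a contradiction; hence $\#\pi^{-1}(y)=1$. Since $Y^s_t$ and $Y^r_t$ are compact and connected (connectedness being inherited from $X$ and preserved under smooth MCF), $\pi$ is a proper local diffeomorphism and therefore a covering map, upgraded to a global diffeomorphism by the single-preimage count. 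Setting $u(y,t)$ via $\pi^{-1}(y)=y+u(y,t)N(y,t)$ completes the proof.

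The main technical obstacle I expect is the second step, specifically verifying that the plane through the auxiliary point $p$ parallel to $T_yY^r_t$ meets the quantitative Hausdorff bound $\beta<\beta_0(\alpha,\delta)$ required by Lemma \ref{interpolation}: the displacement of $p$ from $y$ (at most $2c_2\sqrt{t}$) and the curvature-driven deviation of $Y^r_t$ from its own tangent plane must both be absorbed into $\beta$, and the $\rho$-optimization above must be carried out carefully using the structural inequalities $c_1c_2<10^{-6}$ and $c_1^2<1/80$. Once those constants line up, the rest of the argument is routine.
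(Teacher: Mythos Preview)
Your strategy is sound, and the covering-map packaging in the third step is a clean way to finish once local transversality is established. The overall shape also matches the paper: both arguments use the tubular neighborhood from the previous lemma, then compare tangent directions to rule out double preimages, then handle surjectivity.

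However, the technical obstacle you flag in the second step is not merely delicate---it actually fails as written. With your choice of reference plane (the plane through $p$ parallel to $T_yY^r_t$) and any scale $\rho$, the deviation of $Y^r_t$ itself from its own tangent plane over $B(y,\rho)$ is already of order $(c_1/\sqrt{t})\rho^2=\alpha\rho$, so the Hausdorff parameter $\beta$ in Lemma~\ref{interpolation} is forced to satisfy $\beta\gtrsim\alpha$ (before even adding the $2c_2\sqrt{t}/\rho$ contribution). But the hypothesis $\beta<\beta_0(\alpha,\delta)$ in that lemma scales like $\beta\lesssim\alpha\delta^2$: this is precisely what is needed so that the comparison circle of radius $1/\alpha$ reaches its apex within arclength $\delta/4$ (cf.\ the condition $2\beta<\alpha\delta^2$ visible in the proof). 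Hence $\beta/\alpha\gtrsim 1$ regardless of how small $c_1c_2$ is, and no choice of $\rho$ or $\delta<1$ makes the lemma applicable. The structural inequalities $c_1c_2<10^{-6}$ and $c_1^2<1/80$ control the \emph{product} $\alpha\beta$ (which is what would feed into the gradient bound $\sqrt{2\alpha\beta}$), but not the \emph{ratio} $\beta/\alpha$ that Lemma~\ref{interpolation} requires.

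The paper sidesteps this by invoking Lemma~\ref{interpolation2} instead, which compares the tangent planes of two nearby hypersurfaces directly rather than passing through a fixed affine plane. There one takes $a=\max(|A^r|,|A^s|)\leq c_1/\sqrt{t}$ and $b=d_H(Y^r_t,Y^s_t)\leq 2c_2\sqrt{t}$, so $ab\leq 2c_1c_2$ is small and the angle bound $\tan\angle(T_yY^r_t,T_zY^s_t)\leq 3\sqrt{ab}$ follows with no constraint on $b/a$. With that angle bound in hand, your transversality of $d\pi$, the single-preimage count (via the connectedness from \eqref{app_3} and the curvature bound forcing the component to exit the ball before returning), and the covering-map conclusion all go through exactly as you wrote them. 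The fix is simply to swap Lemma~\ref{interpolation} for Lemma~\ref{interpolation2}; the rest of your outline is correct.
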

\proof By the above lemma, we know that $Y^{s}_t \subseteq \mathcal{N}(Y^{r}_t)=\mathcal{N}=B(Y^r_t,\mathrm{inj}(Y^r_t))$. Denote $Y=Y^r_t$ and $Z=Y^{s}_t$ and denote by $\pi:\mathcal{N}\rightarrow Y$ the projection to the nearest point. 

\noindent On one direction, suppose that there are $z_1\neq z_2 \in Z$ such that $\pi(z_1)=\pi(z_2)=y\in Y$. By applying \eqref{app_2} for both  $Y$ and $Z$, we see that 
\begin{equation}
d(z_i,y) \leq 2\cdot c_2\sqrt{t} \leq 2\cdot10^{-6} \frac{\sqrt{t}}{c_1}
\end{equation}
and that there exists $x\in X$ such that 
\begin{equation}
d(x,y) \leq 10^{-6} \frac{\sqrt{t}}{c_1}
\end{equation}
By \ref{app_3} we can write
\begin{equation}
B\left(x,\frac{\sqrt{t}}{c_1}\right)\cap Z=G\cup B 
\end{equation}
and $z_1,z_2\in G$. The curvature bounds and distance bounds \eqref{app_1},\eqref{app_2} will not allow that. In fact, by the following lemma, we have that
\begin{equation}
\tan(\angle(T_yY,T_{z_i}Z)) \leq \frac{6}{1000}
\end{equation}
As $(z_i-y) \perp T_yY$ we see that $\frac{z_2-z_1}{||z_2-z_1||}$ is almost perpendicular to $T_{z_i}Z$ and so $z_1$ and $z_2$ are two points in $G$ that are very close to the center of $B\left(x,\frac{\sqrt{t}}{c_1}\right)$ with almost parallel tangent planes that lie one above the other. The curvature bound \eqref{app_1} prevents that since, as before, it will force the connected component of $z_1$ to leave the ball before returning to $z_2$.  Thus over every point in $Y$ lies at most one point in $Z$. 

On the other hand, given any $y\in Y$  there is $z'\in Z$ with
\begin{equation}
d(z',y) \leq 2\cdot 10^{-6}\frac{\sqrt{t}}{c_1}
\end{equation}
Letting $y'=\pi(z')$ we see again that $\tan(\angle(T_{y'}Y,T_{z'}Z)) \leq \frac{6}{1000}$, so by the curvature bounds \eqref{app_1}, and since $y$ and $y'$ are very close compared to the scale $\frac{\sqrt{t}}{c_1}$, there will be a point $z$ over $y$.
This completes the proof $\Box$.

\begin{lemma}\label{interpolation2}
Let $M^2\subseteq\mathcal{N}(M^1)$ where $M^1,M^2$ are hypersurfaces in $\mathbb{R}^{n+1}$ and let $\pi:\mathcal{N}(M^1)\rightarrow M^1$ be the nearest point projection. If we have the bounds $\max\left(|A^1|,|A^2|\right)\leq a$  and $d_H(M^1,M^2) \leq b$ and $|ab|<<1$ then for every $m_2\in M_2$ setting $m_1=\pi(m_2)$ we have
\begin{equation}
\tan(\angle(T_{m_1}M^1,T_{m_2}M^2)) \leq \frac{(2-ab)\sqrt{ab-\frac{a^2b^2}{4}}}{2\left(1-\frac{ab}{2}\right)^2-1} \leq 3\sqrt{ab}.
\end{equation}
\end{lemma}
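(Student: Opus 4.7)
The plan is to mirror the proof of Lemma \ref{interpolation}, using a circle comparison on $M^2$ combined with a ball comparison for $M^1$ to turn $C^0$-closeness plus two-sided $C^2$-bounds into $C^1$-closeness of tangent planes.

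After translating and rotating, we may assume $m_1 = 0$ and $P := T_{m_1}M^1 = \spa\{e_1,\ldots,e_n\}$, with unit normal $N_1 = e_{n+1}$. Since $m_1 = \pi(m_2)$, the vector $m_2 - m_1$ is parallel to $N_1$, so $m_2 = z_0\,e_{n+1}$ with $|z_0| \leq b$; reflecting through $P$ if necessary, we arrange $z_0 \geq 0$. Let $\theta = \angle(T_{m_1}M^1,T_{m_2}M^2)$ and choose a unit vector $v \in T_{m_2}M^2$ with $\langle v, e_{n+1}\rangle = \sin\theta \geq 0$; let $\gamma : [0,s_0] \to M^2$ be the unit speed geodesic with $\gamma(0)=m_2$ and $\gamma'(0)=v$. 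Since $\gamma$ is intrinsic, $\gamma''(t)$ is normal to $M^2$ with $|\gamma''(t)| = |A^2(\gamma',\gamma')| \leq a$ and $\gamma'' \perp \gamma'$. Setting $h(t) = \langle\gamma(t),e_{n+1}\rangle$, the circle comparison from Lemma \ref{interpolation} (the extremal ODE $h''(t) = -a\sqrt{1-h'(t)^2}$, whose solution traces a circle of radius $1/a$) gives the lower bound
\begin{equation*}
h(t) \geq z_0 + \frac{1}{a}\bigl(\cos(\theta-at)-\cos\theta\bigr), \qquad 0 \leq t \leq \theta/a.
\end{equation*}

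For the competing upper bound, we use $d_H(M^1,M^2) \leq b$ to pick $q(t) \in M^1$ with $|q(t)-\gamma(t)| \leq b$. Since $|A^1| \leq a$ and $T_{m_1}M^1 = P$, the standard ball comparison for bounded-curvature hypersurfaces (each point sees the complement of the two osculating balls of radius $1/a$) yields $|\langle q(t),e_{n+1}\rangle| \leq a^{-1}\bigl(1-\sqrt{1-a^2 |P(q(t))|^2}\bigr)$. Combined with $|P(q(t))| \leq |P(\gamma(t))| + b \leq t+b$, this gives
\begin{equation*}
h(t) \leq b + \frac{1}{a}\Bigl(1-\sqrt{1-a^2(t+b)^2}\Bigr).
\end{equation*}

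Combining the two bounds (and using $z_0 \geq 0$) produces
\begin{equation*}
\cos(\theta-at) - \cos\theta \;\leq\; ab + 1 - \sqrt{1-a^2(t+b)^2}
\end{equation*}
for every $t \in [0,\theta/a]$. Choosing $t$ so that the extremal circle-arc lower bound meets the ball-comparison upper bound --- the extremal configuration being two mutually tangent spheres of radius $1/a$ touching $P$ from opposite sides and separated by vertical gap $b$ --- should produce the sharp constant in the statement. The coarser estimate $\tan\theta \leq 3\sqrt{ab}$ follows immediately when $ab \ll 1$ by Taylor expanding both sides to leading order in $\sqrt{ab}$. The main obstacle is pinning down the exact extremal configuration and optimal value of $t$ that produce the explicit constant; only the coarse form $3\sqrt{ab}$ is actually used later in the paper, so this is largely cosmetic.
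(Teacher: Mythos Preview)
Your approach is essentially the paper's. Both arguments place an osculating ball of radius $1/a$ tangent to each surface and compare: the paper draws the ball tangent to $T_{m_1}M^1$ bounding $M^1$ from above and the ball tangent to $T_{m_2}M^2$ bounding $M^2$ from below, and observes that the gap between these two arcs is a lower bound for $d_H(M^1,M^2)$. The paper's hint ``until the angle will be halved'' is the whole point: the sharp constant in the statement is exactly $\tan(2\phi)$ where $\cos\phi = 1 - ab/2$ (verify via the double-angle formulae: with $u=1-ab/2$ the quoted fraction is $\dfrac{2u\sqrt{1-u^2}}{2u^2-1}$), corresponding to each arc contributing vertical drift $\frac{1}{a}(1-\cos\phi)=b/2$.

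In your setup the same choice is $t = \theta/(2a)$. Your route differs only in that you jump from $\gamma(t)\in M^2$ to a nearby $q(t)\in M^1$ via the Hausdorff bound \emph{before} invoking the ball comparison on $M^1$; this introduces the horizontal slop $t+b$ in place of $t$, which obscures the exact constant but is harmless at leading order. Plugging $t=\theta/(2a)$ into your displayed inequality and Taylor expanding gives $3\theta^2/8 \leq ab + \theta^2/8 + O\bigl((ab)^{3/2}\bigr)$, hence $\theta \leq 2\sqrt{ab}\,(1+o(1))$ and $\tan\theta \leq 3\sqrt{ab}$ for $ab\ll 1$. Since only this coarse bound is used later, and since the paper's own proof is likewise a sketch with ``details left to the reader'', your argument is adequate; to recover the sharp constant, compare the two ball boundaries directly at the point where their tangents are parallel, rather than routing through a point of $M^1$.
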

\proof The proof is similar to the one of  Lemma \ref{interpolation}. This time we obtain a lower bound for $|u|$. Assume the tangent spaces are not the same (else there is nothing to prove) and that w.l.g that $T_{m_1}M^1=\spa\{e_1,\ldots e_n\}$ $m_1=0$ and $m_2=(0,\ldots ,0,z)$ with $z\geq 0$.  By drawing two balls of radius $\frac{1}{a}$, one starting horizontally that bounds $M^1$ from above and the other starting parallel to $T_{m_2}M^2$ that bounds $M^2$ from below, those two barriers will force $M^2$ to drift apart from $M^1$ until the angle will be halved. This will give a lower bound on the Hausdorff distance, which should be less than $b$ by assumption. The details are left to the reader $\Box$.  

\subsection{The Graph PDE for Two Evolving Surfaces}\label{graph_pde_sec}

Having discovered that $Y^{s}_t$ is a graph of a function $u$ over $Y^r_t$ when $s\leq r$ and $t\geq c_3r^2$, let us compute the evolution equation in such a situation. The computation done here is the parabolic analogue for the one done in \cite[Lemma 2.26]{CM} for the minimal surface case.

\begin{lemma}\label{graph_pde}
Let $(M^1_t)_{t\in (0,T)}$ flow by mean curvature and let $\mathcal{N}_t$ be a tubular neighborhood of $M^1_t$. Let $(M^2_t)_{t\in (0,T)}$ be another flow by mean curvature such that for every $t\in (0,T)$ $M^2_t \subseteq \mathcal{N}_t$. Let $\vec{N}(x,t)$ denote the inner pointing unit normal to $M^1_t$ at $x$ and write $M^2_t$ as a graph of a function $u(x,t)$ over $M^1_t$, so
\begin{equation}
M^2_t=\{x+u(x,t)N(x,t)\;|\;x\in M^1_t\}.
\end{equation}
Then $u$ satisfies an equation of the form:
\begin{equation}\label{EL}
u_t=(1+\de)\mathrm{div}((I+L)\nabla u)+|A|^2 u - Q_{ij}A_{ij}
\end{equation}
where $A$ is the second fundamental form of $M^1$, $A_{ij}$ are its components in a local orthonormal frame, $|\de|,|L|\leq D(|A||u|+|\nabla u|)$ and $|Q_{ij}| \leq D(|A||u|+|\nabla u|)^2$ for some constant $D>0$. 
\end{lemma}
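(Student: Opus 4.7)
The plan is to follow the strategy of \cite[Lemma~2.26]{CM}, modifying it to accommodate the time derivative. Let $\Phi_t\colon\Sigma\to M^1_t$ be a Lagrangian parametrization of the mean curvature flow, so that $\partial_t\Phi_t=\vec H^{M^1}$, and write the induced parametrization of $M^2_t$ as
\begin{equation}
F(p,t)=\Phi_t(p)+u\bigl(\Phi_t(p),t\bigr)\,\vec N\bigl(\Phi_t(p),t\bigr).
\end{equation}
Since $M^2_t$ also flows by mean curvature, the normal component of $\partial_tF$ with respect to $M^2_t$ equals $\vec H^{M^2}$; taking the inner product with the unit normal $\vec N^2$ to $M^2_t$ gives the scalar equation
\begin{equation}\label{mcf_eq_proj}
\langle\partial_t F,\vec N^2\rangle=H^2.
\end{equation}
Using the standard identity $\partial_t\vec N=\pm\nabla^{M^1}H^{M^1}$ under MCF (together with $\nabla u\cdot\vec H^{M^1}=0$, since $\nabla u$ is tangent to $M^1$), the left-hand side of \eqref{mcf_eq_proj} becomes a simple expression in $u$, $u_t$ and the geometry of $M^1$.

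Next I would expand $\vec N^2$, $g^2$, and $A^2$ at a fixed spacetime point in a local orthonormal frame $\{e_i\}$ on $M^1$ that diagonalizes $A$. Differentiating $F$ in the spatial directions gives $dF(e_i)=(\delta_{ij}-uA_{ij})e_j+(\nabla_iu)\vec N$, so
\begin{equation}
g^2_{ij}=[(I-uA)^2]_{ij}+\nabla_iu\,\nabla_ju,\qquad \vec N^2=\frac{\vec N-[(I-uA)^{-1}]_{ij}(\nabla_ju)\,e_i}{\sqrt{1+[(I-uA)^{-2}]_{ij}\nabla_iu\,\nabla_ju}}.
\end{equation}
A further differentiation, followed by the parabolic analogue of the variation-of-mean-curvature computation in \cite{CM}, yields an expansion of the form
\begin{equation}
H^2=H^{M^1}+\Delta u+|A|^2u+\mathrm{div}\bigl(\widetilde L\,\nabla u\bigr)+\widetilde Q_{ij}A_{ij},
\end{equation}
where $|\widetilde L|\leq C(|A||u|+|\nabla u|)$ and $|\widetilde Q_{ij}|\leq C(|A||u|+|\nabla u|)^2$.

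Substituting these expansions back into \eqref{mcf_eq_proj}, the $H^{M^1}$ terms cancel because $\langle\vec N,\vec N^2\rangle=1+O(|\nabla u|^2)$, and what remains has the claimed form
\begin{equation}
u_t=(1+\delta)\,\mathrm{div}\bigl((I+L)\nabla u\bigr)+|A|^2u-Q_{ij}A_{ij}.
\end{equation}
Here $\delta$ absorbs the factor $\langle\vec N,\vec N^2\rangle^{-1}$, while $L$ collects all divergence-form corrections, including the term $u\langle\nabla H^{M^1},\vec N^2\rangle$, which is rewritten via the contracted Codazzi identity $\nabla H^{M^1}=\mathrm{div}\,A$ and an integration by parts. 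The main obstacle will be this final bookkeeping step: one must verify that every Taylor remainder is either of divergence form with coefficients controlled by $|A||u|+|\nabla u|$, or carries one factor of $A$ times a quadratic expression in $(|A||u|+|\nabla u|)$, and that all linear-in-$u$ non-Jacobi terms cancel. This is done by sorting the expansion according to the combined degree in $(|A||u|,|\nabla u|)$ and noting that the identity $H^{M^1}\equiv H^{M^1}$ already accounts for all zeroth and first-order-in-$u$ terms except the Jacobi contribution $|A|^2u$.
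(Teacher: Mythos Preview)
Your outline is essentially correct and follows the same overall strategy as the paper—both invoke the Colding--Minicozzi computation for the spatial expansion of the mean curvature of the graph. The difference lies in two choices that make the paper's version considerably cleaner.

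First, the paper computes $\langle N,\vec H_2\rangle$ (projection onto the normal of $M^1$, not of $M^2$) via the \emph{first variation of area}: one perturbs $u\mapsto u+sv$, differentiates $\sqrt{\det g_{ij}}$, and integrates by parts. This automatically produces the divergence structure $\mathrm{div}((I+L)\nabla u)$ with the required bounds on $L$, so no after-the-fact bookkeeping is needed. In your direct frame computation the divergence form is not manifest and must be recovered term by term.

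Second, the paper pairs the time-derivative identity $\partial_t\phi^2=\vec H^{M^1}+u_tN-u\nabla H$ with the \emph{same} vector $N$. Since $\nabla H$ is tangent to $M^1$, the term $u\langle\nabla H,N\rangle$ vanishes identically, so no $\nabla A$ ever appears and no appeal to Codazzi is needed. Your choice to project onto $N^2$ is more natural from the viewpoint of the equation $\langle\partial_tF,N^2\rangle=H^2$, but it produces the extra term $u\langle\nabla H,N^2\rangle\sim -u(\nabla_ju)\nabla_jH$. Your proposed fix via Codazzi and integration by parts does work, but it is not a one-line argument: writing $u(\nabla_ju)\nabla_jH=\nabla_k(uA_{jk}\nabla_ju)-(\nabla_ku)A_{jk}\nabla_ju-uA_{jk}\nabla_j\nabla_ku$ leaves the non-divergence term $uA_{jk}\nabla_j\nabla_ku$, which must then be absorbed into the principal symbol together with the analogous $uA\cdot\nabla^2u$ contributions already present in your expansion of $H^2$. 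Only after combining all such terms does the coefficient matrix take the form $I+L$ with $|L|\le D(|A||u|+|\nabla u|)$. This is exactly the ``final bookkeeping step'' you flag, and it is where the paper's approach buys real economy: by working with $\langle N,\vec H_2\rangle$ from the start, both obstacles disappear simultaneously.
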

\proof  We want to derive an expression for the mean curvature of $M^2$   $\vec{H}_2$, for some fixed time $t$, in terms of $u$ and its derivatives, and in terms of the geometric quantities of $M^1$. Once we have done that, the derivation of the equation will follow easily. Let $A$,$H$, $dVol$ be the second fundamental form, the mean curvature and the volume form of $M^1$. Choose $m_1\in M^1$ and let $E_1,\ldots,E_n$ be a local orthonormal basis for $M^1$ around $m_1$. Extend $E_1,\ldots,E_n$, along with $N$, to normal fields in a neighborhood of $m_1$ in $\mathcal{N}$ by parallel translating along $N$. The vectors
\begin{equation}
X_i=E_i+u_iN-uA_{ik}E_k
\end{equation} 
form a basis to the tangent space of $M^2$. Therefore
\begin{equation}
g_{ij}=\delta_{ij}+u_iu_j-2uA_{ij}+u^2A_{ik}A_{jk}=\delta_{ij}-2uA_{ij}+Q_{ij}
\end{equation}
and so 
\begin{equation}
g^{ij}=\delta_{ij}+2uA_{ij}+Q_{ij}.
\end{equation}
Now, taking a variation $\tilde{u}(x,s)=u(x)+sv(x)$ for some function $v:M^1 \rightarrow \mathbb{R}$ which is localized in the above neighborhood and computing the corresponding quantities, we get
\begin{equation}
\begin{aligned}
\frac{d}{ds}|_{s=0}\sqrt{\det g_{ij}}=&\frac{1}{2}\sqrt{\det g_{ij}}\mathrm{trace}(g^{ij}\frac{d}{ds}|_{s=0}g_{ij})= \\
&\frac{1}{2}\sqrt{\det g_{ij}}(\delta_{ij}+2uA_{ij}+Q_{ij})(v_iu_j+u_iv_j-2vA_{ij}+2uvA_{ik}A_{jk}) = \\
&\sqrt{\det g_{ij}}\left[\left<\nabla u,\nabla v\right>-vH-uv|A|^2+\left<L\nabla u,\nabla v\right>+Q_{ij}A_{ij}v\right]= \\
& \left<(I+L)\nabla u,\nabla v\right> + \sqrt{\det g_{ij}}\left[-vH-uv|A|^2+Q_{ij}A_{ij}v\right] 
\end{aligned}
\end{equation}
where in the last equality we have used
\begin{equation}\label{det_ord}
\sqrt{\det g_{ij}}=1-uH+Q
\end{equation}
to absorb $\sqrt{\det g_{ij}}$ into $(I+L)$. Thus, by integration by parts we get
\begin{equation}
-\frac{d}{ds}|_{s=0}\int \sqrt{\det g_{ij}}dVol=\int \sqrt{\det g_{ij}}v\left[(1+\de)\mathrm{div}((I+L)\nabla u)+H+u|A|^2+Q_{ij}A_{ij}\right]dVol
\end{equation}
where we have used \eqref{det_ord} to write $\frac{1}{\sqrt{\det{g_{ij}}}}=1+\de$. On the other hand, by the definition of the mean curvature vector, we obtain
\begin{equation}
-\frac{d}{ds}|_{s=0}\int \sqrt{\det g_{ij}}dVol=\int v\left<N,\vec{H}_2\right>\sqrt{\det g_{ij}}dVol
\end{equation}
so
\begin{equation}
\left<N,\vec{H}_2\right>=(1+\de)\mathrm{div}((I+L)\nabla u)+H+u|A|^2+Q_{ij}A_{ij}.
\end{equation}
Having done that, deriving the PDE is easy. Indeed, writing $M^i$ as $\phi^i(x,t)$ we get
\begin{equation}
\phi^2(x,t)=\phi^1(x,t)+u(x,t)N(x,t)
\end{equation}
so differentiating with respect to time we obtain
\begin{equation}
\vec{H}_2=\vec{H}+u_t\vec{N}-u\nabla H
\end{equation}
so
\begin{equation}
u_t=\left<\vec{H}_2,N\right>-H=(1+\de)\mathrm{div}((I+L)\nabla u)+u|A|^2+Q_{ij}A_{ij}
\end{equation}
$\Box$.

\subsection{The Separation Estimate}\label{uniq_lim_sec}

We now come to the proof of the separation estimate, Theorem \ref{main_thm3}. In fact, we are going to prove a slightly stronger theorem, from which Theorem \ref{main_thm3} will follow easily. At long last, we are going to fix the parameter $\Lambda$, on which the output of Theorem \ref{main_thm2} and thus the definition of an $r$-approximate evolution depended. 

\begin{theorem}\label{improved_est}
Let $s\leq r$ and let $Y^r_t$ and $Z^s_t$ be $(\Lambda)$ $r$-approximate and $s$-approximate evolutions of $X$ with $\Lambda=2D$, where $D$ is the constant from Lemma \ref{graph_pde}.  Writing $Y^{s}_t$ as a graph of a function $u$ over $Y^r_t$ for $s\leq r$ and $t\in [c_3r^2,c_0R^2]$, we have the estimate
\begin{equation}
|u| \leq Cr^{1/2}t^{1/4}
\end{equation}
for some $C>0$.
\end{theorem}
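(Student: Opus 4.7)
The plan is to derive an ordinary differential inequality for $F(t):=\|u(\cdot,t)\|_\infty$ by applying a parabolic maximum principle to the graph PDE of Lemma \ref{graph_pde}, and then integrate. First, by the triangle inequality applied to \eqref{app_2} for both $Y^r_t$ and $Z^s_t$, we have the crude global bound $|u(\cdot,t)|\leq 2c_2\sqrt{t}$ for $t\in[c_3r^2,c_0R^2]$. Combined with $|A|\leq c_1/\sqrt{t}$ this gives $|A||u|\leq 2c_1c_2\leq 1/D$ (using $\Lambda=2D$), and Lemma \ref{interpolation2} further supplies a small gradient bound $|\nabla u|\leq 3\sqrt{2c_1c_2}$. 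Consequently $|\varepsilon|,|L|<1/2$ in the PDE of Lemma \ref{graph_pde}, so $I+L$ is uniformly positive definite and the divergence operator is uniformly parabolic.

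Next I would apply the maximum principle to $F(t)$. Since $Y^r_t$ is compact the supremum is attained, say at $x^\star$; at this point $\nabla u(x^\star,t)=0$ and the Hessian of $u$ has the correct sign, so positive definiteness of $I+L$ makes the divergence term contribute nonpositively. The quadratic error, with $\nabla u=0$, satisfies $|Q_{ij}A_{ij}|\leq nD|A|^3 u^2$, so in the sense of upper Dini derivatives
\begin{equation*}
F'(t)\leq |A(x^\star,t)|^2 F(t)+nD|A(x^\star,t)|^3 F(t)^2\leq \frac{c_1^2}{t}F(t)+\frac{nDc_1^3}{t^{3/2}}F(t)^2.
\end{equation*}
The case where the extremum is negative is handled symmetrically, yielding the same inequality for $F$.

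To close the argument, I would use the crude bound $F\leq 2c_2\sqrt{t}$ inside the quadratic term: since $Dc_1c_2\leq 1/2$, one has $nDc_1^3 F^2/t^{3/2}\leq 2nDc_1c_2\cdot c_1^2 F/t\leq nc_1^2 F/t$, and therefore $F'(t)\leq (n+1)c_1^2 F(t)/t$. With the structural constants taken small enough so that $(n+1)c_1^2\leq 1/4$ (this is the role of the bound $c_1^2<1/80$ in Theorem \ref{main_thm2}, possibly sharpened in a dimension-dependent way), Gronwall produces $F(t)\leq F(t_0)(t/t_0)^{1/4}$. Since $F(t_0)\leq 2c_2\sqrt{c_3}\,r$ at $t_0=c_3r^2$, one concludes $F(t)\leq 2c_2 c_3^{1/4}\cdot r^{1/2}t^{1/4}$, which is the desired estimate.

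The main subtlety is controlling the quadratic error $Q_{ij}A_{ij}$, which carries a factor of $|A|^3$. The structural inequality $c_1c_2<\Lambda^{-1}$, built into the definition of an approximate evolution precisely so that $\Lambda$ can be taken to equal $2D$, is what lets this term be absorbed into a small multiple of the linear $|A|^2 u$ piece. The other crucial observation is that $\nabla u=0$ at the extremum, which eliminates the potentially dangerous $|A||\nabla u|^2$ cross term for free and avoids the need for a secondary bootstrap on $|\nabla u|$.
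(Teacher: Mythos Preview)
Your argument is correct and follows the same line as the paper: apply the maximum principle to the PDE of Lemma~\ref{graph_pde}, use that $\nabla u=0$ at an extremum so that the coefficients depend only on $|A||u|$, and exploit $c_1c_2<\Lambda^{-1}=(2D)^{-1}$ to absorb the quadratic error $Q_{ij}A_{ij}$ into a multiple of $|A|^2u$.

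The one genuine difference is in the integration step. The paper plugs the crude bound $|u|\leq 2c_2\sqrt{t}$ into the right-hand side of $u_t\leq 2|A|^2u$, integrates to get an improved bound of the form $2c_2(\sqrt{c_3}\,r+\tfrac{1}{10}\sqrt t)$, and then iterates this bootstrap, with the $\sqrt t$ coefficient shrinking geometrically; the limit is the stated $r^{1/2}t^{1/4}$ bound. You instead keep the inequality in the form $F'\leq (\text{const})\,c_1^2F/t$ and apply Gronwall once, which is cleaner and yields the same conclusion. Regarding your dimension caveat: if you bound $|Q_{ij}A_{ij}|\leq D(|A||u|)^2|A|\leq |A|^2|u|$ exactly as the paper does (via $2Dc_1c_2<1$), the resulting exponent is $2c_1^2<1/40<1/4$ with no factor of $n$, so no sharpening of $c_1$ is needed. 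Your invocation of Lemma~\ref{interpolation2} for a global gradient bound is harmless but unnecessary, since at the extremum $\nabla u=0$ already forces $|\varepsilon|,|L|\leq D|A||u|<1$, which is all that the sign of the divergence term requires.
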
  
\proof The idea is to use equation \eqref{EL} and the maximum principle to bootstrap and obtain more and more improved estimates for $u$. In order to start the bootstrap, note that by \eqref{app_1} and \eqref{app_2} we have
\begin{equation}\label{iter_curv_est}
|A(t)|=|A^r(t)| \leq \frac{c_1}{\sqrt{t}}
\end{equation}
and
\begin{equation}\label{iter_u_est}
|u(t)|  \leq 2c_2\sqrt{t}.
\end{equation}
Notice that at a maximum point of $u$, the $|\nabla u|$ dependence of the bounds on the coefficients in equation \eqref{EL} disappears. Note further that by Cauchy-Schwartz and estimates \eqref{iter_curv_est},\eqref{iter_u_est}
\begin{equation}
|Q_{ij}A_{ij}| \leq D(|A||u|)^2|A| \leq 2Dc_1c_2 |u||A|^2\leq |u||A|^2
\end{equation}
where we have used the relation $c_1c_2<\Lambda^{-1}=(2D)^{-1}$.  Therefore, using estimates \eqref{iter_curv_est},\eqref{iter_u_est} and applying the maximum principle for equation \eqref{EL} we obtain
\begin{equation}
\frac{d}{dt}u(x_{\max},t) \leq 2u(x_{\max},t)|A(x_{\max},t)|^2 \leq 2c_2\frac{2c_1^2}{\sqrt{t}} < \frac{2c_2}{40}\frac{1}{\sqrt{t}}
\end{equation}
where the last inequality holds since $c_2^2<\frac{1}{80}$. A similar calculation is done for $u(x_{\min})$. By integrating starting from $t_0=c_3r^2$ we get the improved estimate
\begin{equation}
|u(t)| \leq 2c_2\left(\sqrt{c_3} r+\frac{1}{10}\sqrt{t}\right)\leq  2c_2\left(c_3^{1/4} r^{1/2}t^{1/4}+\frac{1}{10}\sqrt{t}\right). 
\end{equation}
Plugging this back into the inequality, we get
\begin{equation}
\frac{d}{dt}u(x_{\max},t) \leq 2c_2\left(\frac{1}{40}\frac{c_3^{1/4}r^{1/2}}{t^{3/4}}+\frac{1}{400}\frac{1}{\sqrt{t}}\right)
\end{equation}
which yields
\begin{equation}
|u(t)| \leq 2c_2\left(\left(1+\frac{1}{10}\right)c_3^{1/4} r^{1/2}t^{1/4}+\frac{1}{100}\sqrt{t}\right).
\end{equation}
Continuing the bootstrap, this will yield:
\begin{equation}
|u(t)| \leq 4c_2c_3^{1/4} r^{1/2}t^{1/4}
\end{equation}
as required $\Box$.
\proof[Proof of Theorem \ref{main_thm3}] This follows directly from Theorem \ref{improved_est} $\Box$.
\proof[Proof of Corollary \ref{weak_uniq_cor}] Fixing $t>0$, for $r$ sufficiently small we have $t \geq c_3r^2$. By Theorem \ref{main_thm3} for every $s\leq r$
\begin{equation}
d_H\left(Y^{r}_t,Y^{s}_t\right)\leq C(t)\sqrt{r}. 
\end{equation} 
Thus, for every sequence $\{r_n\}_{n=1}^\infty$ with $r_n>0$ and $\lim_{n\rightarrow \infty}r_n=0$, $\{Y^{r_n}_t\}_{n=1}^\infty$ forms a Cauchy sequence in the Hausdorff sense. Therefore $\lim_{r\rightarrow 0}Y^r_t$ exists. The second part follows since every physical solution is an $r$-approximate evolution for every $r>0$, so given two physical flows, we can use theorem \ref{main_thm3} with any choice of $r$ $\Box$. 
\vspace{5 mm}

\subsection{The Level-Set Flow}\label{wf_sec}
We start this section by adapting what we have done so far a little bit. According to Corollary  \ref{aux_reif_cor} we can choose approximations $X^r_+$ and $X^r_-$ instead of the $X^r$, such that if $\Omega$ is the bounded open domain with $\partial \Omega=X$ then $X^r_+ \subseteq \bar{\Omega}^c$ and $X^r_- \subseteq \Omega$. There are two differences  between the estimates for $X^r$ that we have been working with and the ones for $X^r_\pm$. The first one is that the constants $c_1,c_2$ in Theorem \ref{aux_reif_thm} and Corollary \ref{aux_reif_cor} are different. The second difference is in property (3), namely, the marginal connected components of $X^r\cap B(x,s)$ for $s\geq r$ are inside the annulus $(B(x,s)-B(x,(1-20\de) s))$, while for $X^r_\pm$ they will be in the slightly thicker annulus $(B(x,s)-B(x,(1-40\de) s))$. These two differences clearly do not influence our arguments in any way so in particular, Theorem \ref{main_thm2} is still valid for the flows emanating from $X^r_{\pm}$. Thus choosing $\Lambda$ as in Proposition \ref{improved_est} and with the $\de,c_0,\ldots c_3$ of Theorem \ref{main_thm2}, we know that  $X^r_{\pm,t}$ exist for time duration $c_0R^2$  and that
\begin{equation}
\lim_{r\rightarrow 0}X^r_{-,t}=\lim_{r\rightarrow 0}X^r_{+,t}=X_t
\end{equation} 
where $X_t$ is the unique physical flow emanating from $X$.
\proof[Proof of Theorem \ref{real_main_thm}] Let $\tilde{X}_t$ be the level set flow of $X$.  By Theorem \ref{smooth_ws} we know that $X_t \subseteq \tilde{X}_t$.  On the other hand, by the avoidance principle, we know that since $X\cap X^r_\pm=\emptyset$  we have 
\begin{equation}\label{no_inter}
\tilde{X}_t\cap X^r_{\pm,t}=\emptyset.
\end{equation}
Denoting by $\Omega^r_{\pm,t}$ the bounded domains with $\partial \Omega^r_{\pm,t}=X^r_{\pm,t}$  we claim that 
\begin{equation}\label{where_lives}
\tilde{X}_t \subseteq \bar{\Omega}^r_{+,t}-\Omega^r_{-,t}:=N^r_t.
\end{equation}
This is true since considering
\begin{equation}
T^r=\inf\{0 \leq t \leq c_0R^2\;\;|\;\; \tilde{X}_t\cap (N_t^r)^c \neq \emptyset\}
\end{equation}
we clearly have that $T^r>0$ by avoiding balls, since the two sets start a positive distance apart. Given $(x_i,t_i)$ such that $x_i \in \tilde{X}_{t_i}\cap (N_{t_i}^r)^c$ with $t_i \rightarrow T^r$ we see (by avoiding balls and since the evolution of $\partial N^r_t=X^r_{+,t}\cup X^r_{-,t}$ is smooth) that $d(x_i,\partial N^r_{t_i})\rightarrow 0$. By compactness  we get $\lim_{j\rightarrow \infty}x_{i_j}=x\in \partial N^r_{T^r}$. On the other hand, by the level set definition of the level set flow, $\tilde{X}_t=f_t^{-1}(0)$ for some continuous function $f:[0,c_0R^2]\times \mathbb{R}^{n+1}\rightarrow \mathbb{R}$ (see \cite{ES1},\cite{CGG}) and so we see that  $x\in \tilde{X}_{T^r}$. Thus $x\in \tilde{X}_{T^r}\cap X^r_{\pm,T^r}$ which contradicts \eqref{no_inter}. 

\vspace{5 mm}
Having \eqref{where_lives} established and noting that by Theorem \ref{main_thm3},  $d_H(N^r_t,X^r_{+,t})\leq C(t)r^{1/2}$, we finally conclude that
\begin{equation}
\tilde{X}_t \subseteq  \bigcap_{0<r<\frac{R}{4}} N^r_t \subseteq \lim_{r\rightarrow 0}N^r_t=X_t.
\end{equation}
Thus $\tilde{X}_t=X_t$ and we are done $\Box$.

\section{Proof of Theorem \ref{main_estimate}}\label{est_sec}

The main purpose of this section is to prove  Theorem \ref{main_estimate}, which was an essential ingredient in the proof of Theorem \ref{main_thm2} and is thus a corner stone for the entire argument. In Section \ref{me_sec} we will state Theorem \ref{main_thm2} in slightly different terms. In Section \ref{sch_sec} we will derive an a priori estimate for the non-homogeneous heat equation which is suitable for thick space-time cylinders. In Section \ref{hold_sec} we will derive a H\"{o}lder gradient estimate suitable for our situation and in Section \ref{conc_sec} we will conclude the proof of Theorem \ref{main_est}.  

\subsection{Main Estimates}\label{me_sec}

 Theorem \ref{main_thm2} will follow immediately from the following ``centered form'' of the estimate,  the proof of which will be described through the rest of this section.

\begin{theorem}[main estimate II]\label{main_est}
There exist some $c>0$ such that for every $\delta>0$ and $M>0$, there exist positive $\tau_0=\tau_0(M,\delta)<<1$ and $\lambda_0=\lambda_0(M,\delta)<<1$ such that for every $0<\lambda<\lambda_0$ there exists  some  $\de_0=\de_0(\delta,M,\lambda)$ such that for every $0<\tau<\tau_0$  and $\de<\de_0$ the following holds:

\noindent If $u:B(p,r)\times [0,\tau r^2]\rightarrow \mathbb{R}$ is a graph moving by mean curvature such that:
\begin{enumerate}
\item For every $(x,t) \in B(p,r)\times [0,\tau r^2]$ 
\begin{equation}
|\nabla u(x,t)| \leq M\de. 
\end{equation}
\item  For every $x \in B(p,r)$ we have
\begin{equation}
|\nabla u(x,\lambda \tau r^2)| \leq \de.
\end{equation}
\item For every $(x,t) \in B(p,r)\times [0,\tau r^2]$ 
\begin{equation}
|u(x,t)| \leq M^2\beta. 
\end{equation}
\item  For every $x \in B(p,r)$ we have
\begin{equation}
|u(x,\lambda \tau r^2)| \leq \beta.
\end{equation}
\end{enumerate}Then:
\begin{enumerate}[label=\Alph*.]
\item 
\begin{equation} 
|A(p,\tau r^2)| \leq (1+\delta)\frac{1}{\sqrt{\pi}}\frac{\de}{\sqrt{\tau} r}.
\end{equation}
\item 
\begin{equation}
|A(p,\tau r^2)| \leq c\frac{\beta}{\tau r^2}+\delta\frac{\de}{\sqrt{\tau} r}
\end{equation}
\end{enumerate}
\end{theorem}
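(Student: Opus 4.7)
The plan is to treat graphical MCF as a perturbation of the heat equation, exploiting the fact that for small $\tau$ the spatial ball $B(p,r)$ looks essentially like all of $\mathbb{R}^n$ from the viewpoint of $(p,\tau r^2)$. Writing the graphical MCF equation as $u_t-\Delta u=-\frac{u_iu_ju_{ij}}{1+|\nabla u|^2}=:Q$, the nonlinearity satisfies $|Q|\leq CM^2\de^2|\nabla^2 u|$ under the global gradient hypothesis, and each component $w=\partial_k u$ obeys $w_t-\Delta w=\partial_k Q$ with an equally small RHS. Following the informal remark in the excerpt, the slice $t=\lambda\tau r^2$ plays the role of effective \emph{initial} data, while the weaker $C^1$ cylinder control governs the boundary and nonlinear corrections.

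After rescaling to $r=1$, $p=0$, the key linear input is a Schauder-type a priori estimate for $v_t-\Delta v=f$ on $B(0,1)\times[\lambda\tau,\tau]$. Picking a cutoff $\eta$ supported in $B(0,1)$ with $\eta\equiv 1$ on a large sub-ball, $\eta v$ satisfies $(\eta v)_t-\Delta(\eta v)=\eta f-2\nabla\eta\cdot\nabla v-(\Delta\eta)v$ on $\mathbb{R}^n$, and Duhamel combined with the sharp identity $\int_{\mathbb{R}^n}|\partial_{x_1}K(x,s)|\,dx=(\pi s)^{-1/2}$ yields
\[|\partial_k v(0,\tau)|\leq \frac{1}{\sqrt{\pi(1-\lambda)\tau}}\,\|v(\cdot,\lambda\tau)\|_{\infty}+E_{\mathrm{bdy}}+E_{\mathrm{frc}},\]
with an analogous $|\nabla^2 v|$ bound scaling like $1/((1-\lambda)\tau)$. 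Here $E_{\mathrm{bdy}}$ is a Gaussian-tail contribution from the cutoff terms (exponentially small in $\tau^{-1}$) and $E_{\mathrm{frc}}=\int_{\lambda\tau}^\tau \frac{\|f(\cdot,s)\|_\infty}{\sqrt{\pi(\tau-s)}}\,ds$.

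To prove part (A) I would apply the estimate with $v=\partial_k u$ and $f=\partial_k Q$: the leading term is $\frac{1}{\sqrt{\pi(1-\lambda)\tau}}\de$, the boundary piece is made negligible by choosing $\tau_0$ small, the denominator correction $1/\sqrt{1-\lambda}$ is absorbed into $(1+\delta)$ by choosing $\lambda_0$ small, and the nonlinear piece---bounded via the Ecker--Huisken derivative estimate \ref{EH_curv} to control $|\nabla^3u|$ in terms of $|\nabla u|^2\cdot|\nabla^2 u|$---is made $\leq \delta\,\de/\sqrt{\tau}$ by choosing $\de_0$ small. For part (B) the same machinery applied directly to $v=u$, $f=Q$, with $\|u(\cdot,\lambda\tau)\|_\infty\leq\beta$ produces the Schauder-type term $c\beta/\tau$; the nonlinear contribution, estimated using the part-(A) bound inside $Q$ and then absorbed, supplies the remaining $\delta\,\de/\sqrt{\tau}$ correction.

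The main obstacle is the three-tier parameter hierarchy that the statement demands: $\tau_0$ and $\lambda_0$ may depend only on $(M,\delta)$, whereas $\de_0$ is additionally permitted to depend on $\lambda$. This ordering is forced by the bootstrap, since the nonlinear forcing feeds $|\nabla^2 u|$ (and hence $|A|$) back into the estimate, and closing the loop requires the self-coefficient to be made strictly less than one---which can only be arranged after $\lambda$, and therefore the length of the diffusion interval, is already fixed. Producing the \emph{sharp} constant $\frac{1}{\sqrt{\pi}}$ on a bounded cylinder, rather than just an unspecified constant, is what drives the isolation of the thick-cylinder regime (small $\tau$) and the delicate ordering of the three small parameters.
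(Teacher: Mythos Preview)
Your overall strategy---linearize graphical MCF as $u_t-\Delta u=Q$, extract the sharp heat-kernel constant $1/\sqrt{\pi}$ from the initial slice, and treat the boundary and forcing as small errors---matches the paper's. The gap is in your control of $E_{\mathrm{frc}}$. For part (A) you take $f=\partial_k Q$ and use $E_{\mathrm{frc}}=\int_{\lambda\tau}^{\tau}\tfrac{\|\partial_k Q(\cdot,s)\|_\infty}{\sqrt{\pi(\tau-s)}}\,ds$. But $\partial_k Q$ contains the term $2u_{ik}u_ju_{ij}/(1+|\nabla u|^2)$, which is only \emph{linear} in $|\nabla u|$: it is of size $\sim M\de\,|\nabla^2u|^2$. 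With only the Ecker--Huisken bound $|\nabla^2 u|\lesssim d_z^{-1}\sim s^{-1/2}$ at hand, this gives $\|\partial_k Q(\cdot,s)\|_\infty\gtrsim \de/s$, hence $E_{\mathrm{frc}}\gtrsim \de\log(1/\lambda)/\sqrt{\tau}$: the same order as the leading term, with a coefficient that \emph{grows} as $\lambda\to 0$. So the nonlinear piece cannot be made $\le\delta\de/\sqrt{\tau}$ by taking $\de_0$ small. Integrating by parts to move both derivatives onto the kernel fares no better, since $\int|\nabla^2K(\cdot,s)|\,dx\sim 1/s$ is not time-integrable; this is precisely why a pure $L^\infty$-Duhamel argument cannot reach second derivatives, and the same obstruction reappears in your part (B).

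The paper's fix is to work in weighted H\"older norms: the Schauder-type estimate (Theorem~\ref{Schauder}) bounds $\sqrt{\tau}\,|\nabla^2u(0,\tau)|$ by the sharp initial-slice term plus $C(\lambda,\alpha)\tau^{-1/2}\big(\sup d_z^2|Q|+\sup d_{z_1,z_2}^{2+\alpha}[Q]_\alpha\big)$. Since $Q$ (undifferentiated) is \emph{quadratic} in $\nabla u$, both weighted quantities are $\lesssim (M\de)^2\sqrt{\tau}$ once one knows $d_z^\alpha[\nabla u]_\alpha\le c\,\de$. That H\"older gradient bound is supplied separately by a De~Giorgi--Nash--Moser argument for the differentiated graphical equation (Theorem~\ref{holder_grad}), and is the key ingredient missing from your outline; with it the forcing contributes $O(\de^2)$ to $\sqrt{\tau}|\nabla^2u|$, which is indeed made $\le\delta\de$ by choosing $\de_0$ small.
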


\subsection{Estimates for the Non-Homogeneous Heat Equation on Thick Cylinders}\label{sch_sec}

The first key point in obtaining the main estimate - Theorem \ref{main_est} -  is a non standard Schauder-type estimate for the non homogeneous heat equation.  Before stating it, we record the following definitions.

\begin{definition}
The parabolic ball of radius $r$ with center $(p,t)$  is defined to be
\begin{equation}
P(p,t,r)=B(p,r)\times [t-r^2,t]
\end{equation}
when it is clear from the context which point is $p$, we define $P^r=P(p,r^2,r)$. For $\tau,\lambda<1$  we  define the narrow $r,\tau$ cylinder to be
\begin{equation}
P^{r,\tau}=B(p,r)\times[0,\tau r^2]
\end{equation}
and the $\lambda$ truncated narrow $r,\tau$ cylinder to be
\begin{equation}
P^{r,\tau,\lambda}=B(p,(1-\sqrt{\lambda}) r)\times [\lambda  \tau r^2, \tau r^2].
\end{equation}
\end{definition}

\begin{theorem}\label{Schauder}
There exists a constant $c>0$ such that for every $\delta>0$ and $M>0$ there exist positive $\tau_0=\tau_0(M,\delta)<<1$ and $\lambda_0=\lambda_0(M,\delta)<<1$  such that for every $0<\lambda<\lambda_0$ and $0<\alpha<1$ there is a constant $C=C(\lambda,\alpha)>0$  such that for every $0<\tau<\tau_0$ the following holds:

\noindent If $u$ is a solution to the non-homogeneous heat equation
\begin{equation}
u_t-\Delta u=f
\end{equation}
on $B(p,r)\times [0,\tau r^2]$ such that:
\begin{enumerate}
\item For every $(x,t) \in B(p,r)\times [0,\tau r^2]$ 
\begin{equation}
|\nabla u(x,t)| \leq M\de. 
\end{equation}
\item  For every $x \in B(p,r)$ we have
\begin{equation}
|\nabla u(x,\lambda \tau r^2)| \leq \de.
\end{equation}
\item For every $(x,t) \in B(p,r)\times [0,\tau r^2]$ 
\begin{equation}
|u(x,t)| \leq M^2\beta. 
\end{equation}
\item  For every $x \in B(p,r)$ we have
\begin{equation}
|u(x,\lambda \tau r^2)| \leq \beta.
\end{equation}
\end{enumerate}
Then:
\begin{enumerate}[label=\Alph*.]
\item 
\begin{equation}
\begin{aligned}
&\sqrt{\tau}r|\nabla^2 v(0,\tau r^2)| \leq \\
& (1+\delta)\frac{1}{\sqrt{\pi}}\de + \frac{C(\alpha,\lambda)}{\sqrt{\tau}r}\left(\sup_{z_1\in P^{r,\tau}} d^2_{z_1}|f(z_1)|+\sup_{z_1,z_2\in  P^{r,\tau}} d^{2+\alpha}_{z_1,z_2}\frac{|f(z_2)-f(z_1)|}{d(z_2,z_1)^{\alpha}}\right).
\end{aligned}
\end{equation}
\item 
\begin{equation}
\begin{aligned}
&\sqrt{\tau}r|\nabla^2 v(0,\tau r^2)| \leq \\ 
&c\frac{\beta}{\sqrt{\tau}r} + \frac{C(\alpha,\lambda)}{\sqrt{\tau}r}\left(\sup_{z_1\in P^{r,\tau}} d^2_{z_1}|f(z_1)|+\sup_{z_1,z_2\in  P^{r,\tau}} d^{2+\alpha}_{z_1,z_2}\frac{|f(z_2)-f(z_1)|}{d(z_2,z_1)^{\alpha}}\right).
\end{aligned}
\end{equation}
\end{enumerate}
Here 
\begin{equation}
d((x_1,t_1),(x_2,t_2))=\sqrt{|x_1-x_2|^2+|t_1-t_2|}.
\end{equation}
and  $d_{z_1}=d(z_1,\partial D)$, $d_{z_1,z_2}=\min(d_{z_1},d_{z_2})$.
\end{theorem}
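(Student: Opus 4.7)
By scaling assume $r=1$ and $p=0$, and restart the equation at time $\lambda\tau$, so we work on the cylinder $B(0,1)\times[\lambda\tau,\tau]$. Decompose $u = u_H + u_I$, where $u_H$ solves the homogeneous heat equation with the same Cauchy--Dirichlet data as $u$ and $u_I = u - u_H$ solves $u_t - \Delta u = f$ with vanishing Cauchy--Dirichlet data. The piece $u_I$ is given by a Duhamel integral against the Green's function of the cylinder, and classical interior weighted parabolic Schauder theory (in the spirit of Krylov or Lieberman) bounds $|\nabla^2 u_I(0,\tau)|$ by precisely the weighted H\"older norm of $f$ appearing in the statement, contributing the $C(\lambda,\alpha)$ term in both (A) and (B). The remaining task is to estimate $\nabla^2 u_H(0,\tau)$ sharply.

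Split $u_H = u_H^{\mathrm{init}} + u_H^{\mathrm{bdry}}$ according to which of the two data pieces is turned on. For $u_H^{\mathrm{bdry}}$ only the crude bounds $M\de$ and $M^2\beta$ from hypotheses 1 and 3 are available; however, the Dirichlet Green's function of $B(0,1)$ between its lateral boundary and the origin over elapsed time $(1-\lambda)\tau \le \tau_0$ decays like $e^{-1/(C\tau_0)}$, so its contribution to the Hessian at $(0,\tau)$ can be made arbitrarily small by choosing $\tau_0$ small, depending on $M$ and $\delta$. For $u_H^{\mathrm{init}}$, extend the initial slice $u(\cdot,\lambda\tau)$ to $\mathbb{R}^n$ by multiplying with a smooth cutoff $\chi$ equal to $1$ on $B(0,1-\sqrt{\lambda})$ and supported in $B(0,1)$, and convolve with the full-space heat kernel $K$; the difference between this free-space convolution and $u_H^{\mathrm{init}}$ at $(0,\tau)$ is again exponentially small in $1/\tau_0$.

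For Part (A), integrate by parts once:
\begin{equation*}
\partial_i\partial_j u_{\mathrm{ext}}(0,\tau) \;=\; \int_{\mathbb{R}^n} \partial_{x_i} K\bigl(-y,(1-\lambda)\tau\bigr)\,\partial_{y_j}\bigl(\chi(y)\,u(y,\lambda\tau)\bigr)\, dy.
\end{equation*}
The $\chi\,\partial_{y_j} u$ piece is bounded by $\|\nabla u(\cdot,\lambda\tau)\|_\infty$ times $\int|\partial_{x_i}K(z,(1-\lambda)\tau)|\,dz$; the 1D identity $\int|\partial_z K_1(z,s)|\,dz = 1/\sqrt{\pi s}$ together with the tensor factorization of the Gaussian yields the constant $1/\sqrt{\pi(1-\lambda)\tau}$, which (after reducing to a principal direction of $\nabla^2 u$ and invoking hypothesis 2) produces the main term $\de/\sqrt{\pi\tau}$ up to the $(1+\delta)$ factor. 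The $u\,\partial_{y_j}\chi$ piece is supported at distance $\ge 1-\sqrt{\lambda}$ from the origin, so its Gaussian factor $e^{-c/\tau}$ absorbs the crude bound $M^2\beta$. Part (B) is analogous but without the integration by parts: bound $\int \partial^2_{x_i x_j}K\cdot\chi u\,dy$ using $\int|\nabla^2 K(z,s)|\,dz \lesssim 1/s$ together with hypothesis 4 to obtain the $c\beta/(\sqrt{\tau}r)$ term.

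The delicate point, and the main obstacle, is tracking the sharp constant $(1+\delta)\tfrac{1}{\sqrt{\pi}}$ in Part (A). Three error sources must each be made smaller than a prescribed fraction of $\de/\sqrt{\tau}$: (i) the replacement of $[0,\tau]$ by $[\lambda\tau,\tau]$, which inserts $\sqrt{1-\lambda}$ in the denominator and is handled by taking $\lambda_0$ small; (ii) the cutoff term involving $\nabla\chi$, controlled by Gaussian decay at distance $\sim 1$ against the crude bound $M^2\beta$ from hypothesis 3; and (iii) the lateral boundary contribution $u_H^{\mathrm{bdry}}$, similarly controlled against $M\de$ and $M^2\beta$. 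Fixing $\delta, M$ first dictates how much Gaussian decay is needed, which in turn forces $\tau_0$ (and, for (i), $\lambda_0$) small in a manner depending on $M$ and $\delta$---exactly the quantifier order in the statement.
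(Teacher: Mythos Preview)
Your proposal is correct and follows essentially the same strategy as the paper: separate the contribution of $f$ from that of the initial/boundary data, and extract the sharp $1/\sqrt{\pi}$ constant for the initial-data piece via a cutoff and the explicit full-space heat kernel, with the boundary piece killed by Gaussian decay once $\tau_0$ is small enough in terms of $M,\delta$. The only organizational difference is that the paper subtracts the \emph{full-space} Gaussian potential $w$ of $f$ (its Lemma on potentials) so that $v=u-w$ is caloric, and then applies the cutoff/representation argument (its harmonic Lemma) to each $\partial_i v$; you instead use the Cauchy--Dirichlet decomposition $u=u_H+u_I$ and cite weighted interior Schauder for $u_I$. Both routes land on the same estimate; the paper's variant has the minor advantage of never touching the Dirichlet Green's function, while yours makes the separation into initial and lateral contributions slightly more transparent.
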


\begin{remark}
There are two main differences between the standard Schauder estimate and the one above. The first and less important one is that
in the standard Schauder estimates the term $|u|_0$ appears in the right hand side instead of $|\nabla u|_0$. In the elliptic case, an estimate similar to the one above follows from the standard Schauder estimate by integration. In the parabolic case this is not as trivial, as we do not assume anything about $|u_t|$. The second and more important difference is that in our estimate there is a distinction between a leading term coming from the initial time slice and a negligible term coming from the rest of the parabolic boundary. In the standard Schauder estimate there is no such distinction.
\end{remark}

In order to prove this version of  Schauder estimate, we need the following two lemmas. The first one is a standard result for Gaussian potentials (see for instance the proof of Theorem 2 in \cite[Chapter 4.3]{Fri} and Theorems 4.6 and 4.8 in \cite{GT}).
\begin{lemma}\label{potential_lemma}
For every $0<\alpha<1$ there exists a constant $C$ with the following property. Let $\Phi$ be the fundamental solution for the heat equation and let $w$ be the Gaussian potential corresponding to $f$, i.e
\begin{equation}
w(x,t)=\int_{0}^{t}\int_{B(0,r)} \Phi(x-y,t-s)f(y,s)dyds.
\end{equation}
Then $w_t-\Delta w=f$ and for every $z\in D \subseteq B(0,r)\times [\tau r^2]$ we have
\begin{equation} 
|w(z)|+d_z|\nabla w(z)|+d^2_z|\nabla^2 w| \leq C\left(\sup_{z_1\in D} d^2_{z_1}f(z_1)+\sup_{z_1,z_2\in D} d^{2+\alpha}_{z_1,z_2}\frac{|f(z_2)-f(z_1)|}{d(z_2,z_1)^{\alpha}}\right)
\end{equation}
$\Box$.
\end{lemma}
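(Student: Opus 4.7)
My plan is to prove the lemma via the classical Schauder potential-theoretic approach, as in \cite[Chapter~4.3]{Fri} and \cite[Theorems~4.6 and 4.8]{GT}, with cosmetic modifications to get the weighted interior form stated here. To verify $w_t-\Delta w=f$, I would split the time integral as $\int_0^{t-\eta}+\int_{t-\eta}^t$ to handle the singularity of $\Phi$ at $s=t$; differentiation under the integral is legitimate on the first piece since $\Phi$ is smooth off the diagonal and itself solves the heat equation, and as $\eta\to 0^+$ the second piece converges to $f(x,t)$ through the approximate-identity property of $\Phi(\cdot,\eta)$.

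For the bounds on $|w|$, $d_z|\nabla w|$ and the ``easy'' part of $d_z^2|\nabla^2 w|$, I would use the kernel estimates $|\nabla^k\Phi(x,t)|\leq C_k t^{-(n+k)/2}\exp(-|x|^2/(8t))$ together with the weighted sup $M_0:=\sup_{z_1\in D}d_{z_1}^2|f(z_1)|$. Fixing $z$, one naturally splits $D$ into the parabolic ball of radius $d_z/2$ around $z$ (where $d_{(y,s)}\sim d_z$, hence $|f|\lesssim M_0 d_z^{-2}$) and its complement (where the Gaussian decay of the kernel dominates any polynomial growth of $d_{(y,s)}^{-2}$); both pieces reduce to elementary Gaussian integrals yielding the stated bounds.

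The delicate step is the full bound on $d_z^2|\nabla^2 w|$: $L^\infty$ control on $f$ alone is insufficient, since $|\nabla^2\Phi|$ has a non-integrable singularity at the diagonal, so the H\"older seminorm of $f$ must enter. The classical remedy is to subtract off a constant near the singularity: with $\chi$ a cutoff supported in a parabolic ball of radius $\sim d_{z_0}$ around a fixed $z_0$,
\begin{equation}
\nabla^2 w(z_0)=\int\nabla^2\Phi(z_0-\zeta)\bigl(f(\zeta)-f(z_0)\chi(\zeta)\bigr)d\zeta+f(z_0)\nabla^2\!\!\int\Phi(z_0-\zeta)\chi(\zeta)d\zeta.
\end{equation}
The oscillation $|f(\zeta)-f(z_0)|\leq M_\alpha\, d(\zeta,z_0)^\alpha$, with $M_\alpha:=\sup d_{z_1,z_2}^{2+\alpha}|f(z_2)-f(z_1)|/d(z_2,z_1)^\alpha$, tames the $d(\zeta,z_0)^{-(n+2)}$ growth of $\nabla^2\Phi$ and makes the first integral converge with the desired H\"older contribution; the second is a mollified Newtonian-type potential of a bounded cutoff, whose second derivatives are of order $d_{z_0}^{-2}M_0$.

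The main obstacle, as usual in Schauder theory, will be carefully organizing the cutoffs and splittings so that the near, intermediate annular, and far regions combine into the claimed uniform bound in $d_z$. This is the parabolic analogue of the standard elliptic Newtonian potential estimate and is worked out in detail in the references above; no new ideas beyond the classical argument are required.
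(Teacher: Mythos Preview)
Your proposal is correct and is precisely the classical Schauder potential argument from the references the paper cites. In fact, the paper does not supply its own proof of this lemma at all: it is stated as a standard result, with the $\Box$ immediately following the statement and the preceding text pointing to \cite[Chapter~4.3]{Fri} and \cite[Theorems~4.6 and~4.8]{GT}. Your outline (splitting the time integral to justify $w_t-\Delta w=f$, kernel bounds plus near/far decomposition for $|w|$ and $|\nabla w|$, and the freezing-of-$f$ trick to handle the nonintegrable singularity of $\nabla^2\Phi$) is exactly the argument carried out in those references, so there is nothing to compare.
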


The following lemma gives a good interior derivative estimate for the heat equation at times that are very close to $0$ compared to the initial scale.

\begin{lemma}\label{harmonic_lemma}

For every $\delta>0$ and $M>0$  there exist a positive $\tau_0=\tau_0(M,\delta)<<1$  such that for every $0<\tau<\tau_0$ the following holds:

\noindent If $u$ is a solution to the homogeneous heat equation
\begin{equation}
u_t-\Delta u=0
\end{equation}
on $B(p,r)\times [0,\tau r^2]$ such that:
\begin{enumerate}
\item For every $(x,t) \in B(p,r)\times [0,\tau r^2]$
\begin{equation}
|u(x,t)| \leq M\de. 
\end{equation}
\item  For every $x \in B(p,r)$
\begin{equation}
|u(x,0)| \leq \de.
\end{equation}
\end{enumerate}
Then:
\begin{equation}
|\nabla u(p,\tau r^2)| \leq (1+\delta)\frac{1}{\sqrt{\pi}}\frac{\de}{\sqrt{\tau} r}.
\end{equation}
\end{lemma}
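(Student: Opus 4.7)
The strategy is to reduce everything to the sharp full-space Gaussian derivative bound, using the fact that since $\tau \ll 1$, the parabolic spread $\sqrt{\tau r^2}$ is much smaller than the distance $r$ from $p$ to $\partial B(p,r)$, so the lateral boundary of the cylinder is ``invisible'' to the heat kernel centered at $p$. Concretely, I would split
\begin{equation*}
u = u_1 + u_2,
\end{equation*}
where $u_1$ solves the heat equation on $\mathbb{R}^n \times [0, \tau r^2]$ with initial data $\tilde u_0(x)$ equal to $u(x,0)$ on $B(p,r)$ and $0$ outside, and $u_2 := u - u_1$ is then a heat solution on $B(p,r)\times[0,\tau r^2]$ with vanishing initial data on $B(p,r)$ and lateral boundary values controlled by $|u| + |u_1| \leq (M+1)\de$ (using the full-space maximum principle on $u_1$).

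For the bulk term $u_1$, I would use the explicit representation $u_1(x,t) = (\Phi(\cdot, t) \star \tilde u_0)(x)$ and compute directly: for any unit vector $v$,
\begin{equation*}
|v \cdot \nabla u_1(p, \tau r^2)| \le \|\tilde u_0\|_\infty \int_{\mathbb{R}^n}|v \cdot \nabla \Phi(y, \tau r^2)|\,dy = \de \cdot \frac{1}{2\tau r^2}\mathbb{E}|v\cdot Y|,
\end{equation*}
where $Y \sim N(0, 2\tau r^2 \cdot I_n)$; since $v\cdot Y \sim N(0, 2\tau r^2)$ one gets $\mathbb{E}|v\cdot Y| = 2\sqrt{\tau r^2/\pi}$, yielding
\begin{equation*}
|\nabla u_1(p,\tau r^2)| \le \frac{1}{\sqrt{\pi}}\frac{\de}{\sqrt{\tau}r}.
\end{equation*}
This is already the ``main term'' with optimal constant, independent of $\tau$.

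For the boundary correction $u_2$, I would use the Poisson representation for the Dirichlet heat problem on $B(p,r)$,
\begin{equation*}
u_2(x,t) = -\int_0^t \!\int_{\partial B(p,r)} \partial_{\nu_y} G_r(x,t;y,s)\, u_2(y,s)\, dS(y)\, ds,
\end{equation*}
where $G_r$ is the Dirichlet heat kernel. The standard comparison $G_r(x,t;y,s) \le \Phi(x-y,t-s)$ together with interior derivative bounds for the heat equation control $|\nabla_x\partial_{\nu_y} G_r|$ at $x=p$ by the corresponding Gaussian quantity. Because $|p-y|=r$ for $y \in \partial B(p,r)$ and $t - s \le \tau r^2$, every term carries a Gaussian factor $e^{-r^2/4(t-s)} \le e^{-1/(4\tau)}$; integrating over the boundary and time yields
\begin{equation*}
|\nabla u_2(p,\tau r^2)| \le (M+1)\de \cdot \frac{1}{r}\cdot P(\tau^{-1})\,e^{-1/(8\tau)}
\end{equation*}
for an explicit polynomial $P$. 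Choosing $\tau_0(M,\delta)$ small enough makes this less than $\delta \cdot \frac{1}{\sqrt{\pi}}\frac{\de}{\sqrt{\tau}r}$, and the triangle inequality then gives the claimed estimate.

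The main technical obstacle is the last step: producing an honest quantitative bound on the boundary contribution in terms of $G_r$ and its derivatives. I would avoid working with $G_r$ explicitly, instead using the crude $G_r \le \Phi$ for the kernel itself together with standard parabolic interior estimates applied on a time slab of thickness comparable to $\tau r^2$ to upgrade this to derivative bounds; the exponential Gaussian factor $e^{-1/(c\tau)}$ arising from the spatial separation $|p-y|=r$ dominates all polynomial factors in $\tau^{-1}$ and $M$ for $\tau$ sufficiently small, which is precisely the point that allows $\tau_0$ to depend only on $M$ and $\delta$.
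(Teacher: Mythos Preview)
Your overall strategy is sound and yields the lemma, but it is organized differently from the paper's argument, and one step in your treatment of the boundary term is not justified as written.

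\textbf{Comparison with the paper.} The paper rescales so that the time interval becomes $[0,1]$ and the spatial ball has radius $R=1/\sqrt{\tau}$, then multiplies $u$ by a smooth cutoff $\xi$ supported in $B(0,R)$ and equal to $1$ on $B(0,R-1)$. The product $v=\xi u$ extends to all of $\mathbb{R}^n$ and satisfies $v_t-\Delta v=-u\Delta\xi-2\nabla u\cdot\nabla\xi$, so the free-space representation formula applies. Differentiating under the integral sign, the initial-data term gives exactly your sharp Gaussian bound $\frac{1}{\sqrt{\pi}}\de$, while the Duhamel term is supported in the annulus $R-1<|y|<R$ and is small for $R$ large simply because $\nabla\Phi$ and $\nabla^2\Phi$ are integrable on $(\mathbb{R}^n\setminus B(0,1))\times(0,1)$. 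No Dirichlet Green's function enters; only the free heat kernel and a cutoff are used. Your decomposition $u=u_1+u_2$ with $u_1$ the free-space solution from sharply truncated data and $u_2$ the Dirichlet remainder is a legitimate alternative, and your computation for $\nabla u_1$ is correct and gives the same optimal constant.

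\textbf{The gap.} Your control of $u_2$ hinges on bounding $\nabla_x\partial_{\nu_y}G_r$, and you propose to get this from ``$G_r\le\Phi$ together with interior derivative estimates.'' That comparison bounds the \emph{values} of $G_r$, but it does not by itself control the \emph{outward normal derivative} $\partial_{\nu_y}G_r$ at the boundary (which is the Poisson kernel), let alone its $x$-gradient. The clean fix is to bypass the kernel entirely: by the maximum principle $|u_2|\le (M+1)\de$ on the whole cylinder, and a standard barrier (or the exit-time interpretation) gives $|u_2(x,t)|\le C_n(M+1)\de\,e^{-c/\tau}$ for $x\in B(p,r/2)$ and $t\le\tau r^2$. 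Then the ordinary interior gradient estimate on a parabolic ball of radius $\sim\sqrt{\tau}\,r$ centered at $(p,\tau r^2)$ yields $|\nabla u_2(p,\tau r^2)|\le C_n(M+1)\de\,e^{-c/\tau}/(\sqrt{\tau}\,r)$, which is what you need. With this correction your argument goes through; the paper's cutoff approach simply avoids this detour.
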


\proof The proof is modeled on the one of Theorem 2.3.8 in \cite{Eva}.

\noindent By scaling, it suffices to prove that if $u$ is such a solution on $B(0,R)\times [0,1]$ for $R$ sufficiently large we have
\begin{equation}
|\nabla u(0,1)| \leq (1+\delta)\frac{\de}{\sqrt{\pi}}. 
\end{equation}
First, fix a positive cutoff function $\xi \in C^\infty_0(B(0,R))$ with  $\xi|_{B(0,R-1)}=1$, $0 \leq \xi \leq 1$, $|\nabla \xi|,|\nabla^2 \xi| \leq C(n)$. Considering the function 
\begin{equation}
v(x,t)=\xi(x)u(x,t)
\end{equation}
we have that $v:\mathbb{R}^{n}\times [0,1]\rightarrow \mathbb{R}$ is defined and satisfies
\begin{equation}
v_t-\Delta v= -u\Delta \xi -2\nabla u \cdot \nabla \xi.
\end{equation}
As $v$ is bounded we have the representation formula
\begin{equation}
\begin{aligned}
v(x,t)= &\int_{\mathbb{R}^{n}}\Phi(x-y,t)v(y)dy+\\
             &\int_{0}^{t}\int_{\mathbb{R}^n}\Phi(x-y,t-s)\left(-u(y,s)\Delta \xi(y) -2\nabla u(y,s) \cdot \nabla \xi(y) \right)dyds = \\
             &\int_{\mathbb{R}^{n}}\Phi(x-y,t)v(y)dy+\\
             &\int_{0}^{t}\int_{\mathbb{R}^n}\left(\Phi(x-y,t-s)\Delta \xi(y) +2\nabla_y \Phi(x-y,t-s) \cdot \nabla \xi\right)u(y,s)dyds.
\end{aligned}
\end{equation}
Differentiating under the integral sign we obtain
\begin{equation}
\begin{aligned}
\nabla v(x,t)=& \int_{\mathbb{R}^{n}}\nabla_x\Phi(x-y,t)v(y)dy+ \\
& \int_{0}^{t}\int_{\mathbb{R}^n}\left(\nabla_x\Phi(x-y,t-s)\Delta \xi(y) +2\nabla_x\nabla_y \Phi(x-y,t-s) \cdot \nabla \xi\right)u(y,s)dyds.
\end{aligned}
\end{equation}
We will bound the two integrals appearing in this expression for $\nabla v(0,1)=\nabla u(0,1)$. To bound the first integral, we have, assuming w.l.g that $\nabla v(0,1) ||e_1$
\begin{equation}
|\nabla u(0,1)|=|\nabla v(0,1)|\leq ||v||_\infty \int \frac{e^{-|x|^2/4 }}{(4\pi)^{n/2}} \frac{x_i}{2}dx 
\end{equation}
so making the change of variables $(x_1, \ldots x_n) \rightarrow (\frac{x_1^2}{4},x_2,\ldots x_n)=(y,z)$
we get
\begin{equation}
|\nabla v(0,1)|\leq \frac{2}{\sqrt{4\pi}}||v||_\infty \int_{0}^{\infty} e^{-y}dy \int_{\mathbb{R}^{n-1}} \frac{e^{-|z|^2/4 }}{(4\pi)^{(n-1)/2}}dz  =\frac{1}{\sqrt{\pi}}||v||_{\infty}\leq \frac{1}{\sqrt{\pi}}\de. 
\end{equation}
As for the second integral, the integrand is zero outside the annulus $R-1<|y|<R$ so as both $\nabla \Phi$ and $\nabla^2 \Phi$ are summable on $(\mathbb{R}^{n}-B(0,1))\times (0,1)$ and by our assumptions on $u$ and $\xi$ we get that for $R$ large enough
\begin{equation}
|\nabla u(0,1)| \leq (1+\delta)\frac{1}{\sqrt{\pi}}\de
\end{equation}
$\Box$.

The following lemma is proved similarly.
\begin{lemma}\label{harmonic_lemma2}

The exist some $C>0$ such that for every $M>0$  there exist positive $\tau_0=\tau_0(M)<<1$  such that for every $0<\tau<\tau_0$ the following holds:

\noindent If $u$ is a solution to the homogeneous heat equation
\begin{equation}
u_t-\Delta u=0
\end{equation}
on $B(p,r)\times [0,\tau r^2]$ such that:
\begin{enumerate}
\item For every $(x,t) \in B(p,r)\times [0,\tau r^2]$
\begin{equation}
|u(x,t)| \leq M^2\beta. 
\end{equation}
\item  For every $x \in B(p,r)$
\begin{equation}
|u(x,0)| \leq \beta.
\end{equation}
\end{enumerate}
Then:
\begin{equation}
|\nabla u(p,\tau r^2)| \leq C\frac{\beta}{\tau r^2}
\end{equation}
$\Box$.
\end{lemma}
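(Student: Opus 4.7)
The plan is to follow the template of the proof of Lemma \ref{harmonic_lemma} verbatim, with $\beta$ playing the role of $\de$ and the global $L^\infty$ bound $M^2\beta$ playing the role of $M\de$. First I rescale to $r=1$, so that the goal becomes $|\nabla u(0,\tau)| \leq C\beta/\tau$ on a ball $B(0,R)$ with $R$ large and depending on $M$. Next introduce a smooth radial cutoff $\xi\in C_0^\infty(B(0,R))$ with $\xi\equiv 1$ on $B(0,R-1)$ and $|\nabla\xi|+|\nabla^2\xi| \leq C(n)$, and set $v=\xi u$. Then $v$ extends by zero to $\mathbb{R}^n \times [0,\tau]$ and solves the inhomogeneous heat equation $v_t-\Delta v = -u\Delta\xi - 2\nabla u\cdot\nabla\xi$.

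Applying Duhamel's principle to $v$ and performing the same integration by parts as in the proof of Lemma \ref{harmonic_lemma} to eliminate the (uncontrolled) $\nabla u$ from the forcing gives the representation
\begin{equation*}
v(x,t) = \int \Phi(x-y,t)\, v(y,0)\, dy + \int_0^t\!\!\int \bigl[\Phi\Delta\xi + 2\nabla_y\Phi\cdot\nabla\xi\bigr](x-y,t-s)\, u(y,s)\, dy\, ds.
\end{equation*}
Differentiating in $x$ and evaluating at $(0,\tau)$, where $v\equiv u$, the first term is controlled using the hypothesis $|v(\cdot,0)| \leq \beta$ together with the standard Gaussian identity $\int |\nabla_x\Phi(y,\tau)|\,dy \leq C/\sqrt{\tau}$, yielding a contribution of size at most $C\beta/\sqrt{\tau}$. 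The second term is supported in the annulus $R-1 \leq |y| \leq R$, where both $|\nabla_x\Phi|$ and $|\nabla_x\nabla_y\Phi|$ decay like $e^{-cR^2/\tau}$ and are multiplied by $|u| \leq M^2\beta$; choosing $R$ large depending on $M$ and $\tau_0(M)$ small enough forces this correction to be dominated by the main term. Unscaling yields $|\nabla u(p,\tau r^2)| \leq C\beta/(\sqrt{\tau}\,r)$, which in the relevant regime $\tau r^2 \leq 1$ (equivalently $\sqrt{\tau}\,r \leq 1$) is bounded by $C\beta/(\tau r^2)$, as claimed.

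The only genuinely computational step is the Gaussian-tail estimate: I have to make explicit how large $R$ must be and how small $\tau_0(M)$ must be chosen so that the factor $M^2\beta\cdot e^{-cR^2/\tau}$ can be absorbed into a universal multiple of $\beta/\sqrt{\tau}$. Every other step is a direct transcription of the proof of Lemma \ref{harmonic_lemma} with $\beta$, $M^2\beta$ replacing $\de$, $M\de$, so no new ideas are required beyond that tail estimate.
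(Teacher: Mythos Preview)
Your argument is exactly the one the paper intends (``proved similarly''): the cutoff $\xi$, the Duhamel representation, the integration by parts pushing $\nabla u$ off the forcing, and the Gaussian tail control of the annular term are all transcribed correctly, and they give the scale-invariant bound
\[
|\nabla u(p,\tau r^2)|\;\leq\;\frac{C\beta}{\sqrt{\tau}\,r}.
\]

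The only issue is your final line. Passing from $C\beta/(\sqrt{\tau}\,r)$ to $C\beta/(\tau r^2)$ requires $\sqrt{\tau}\,r\leq 1$, which is \emph{not} among the hypotheses (the lemma is stated for arbitrary $r>0$). This is not a defect in your argument but in the statement: the inequality $|\nabla u|\leq C\beta/(\tau r^2)$ is not parabolically scale-invariant (under $(x,t)\mapsto(\mu x,\mu^2 t)$ the left side scales like $\mu^{-1}$ and the right like $\mu^{-2}$), so it cannot hold for all $r$ with a universal $C$. Looking at how the lemma is actually invoked in the proof of Theorem~\ref{Schauder}~(B), where the goal is $\sqrt{\tau}\,r\,|\nabla^2 v|\leq c\beta/(\sqrt{\tau}\,r)$, the intended conclusion is almost certainly
\[
|\nabla^{2} u(p,\tau r^2)|\;\leq\;\frac{C\beta}{\tau r^{2}},
\]
which \emph{is} scale-correct. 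That version follows from the very same representation you wrote: differentiate once more in $x$ and use $\int_{\mathbb{R}^n}|\nabla_x^{2}\Phi(y,t)|\,dy\leq C/t$ to control the initial-data integral, while the annular boundary term (now involving $\nabla_x^{2}\Phi$ and $\nabla_x^{2}\nabla_y\Phi$ on $R-1\leq|y|\leq R$) is again dominated by the Gaussian tail for $\tau_0(M)$ small. So your plan is right; just take one more $x$-derivative before estimating.
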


\proof[Proof of Theorem \ref{Schauder}] By scaling we can assume $r=1$. Let $v=u-w$ where $w$ is the Gaussian potential from Lemma \ref{potential_lemma}. Taking $\lambda<<1$ we therefore have the estimate
\begin{equation}
\sqrt{\lambda} \sqrt{\tau} |\nabla w|_{P^{1,\tau,\lambda}} \leq C(\alpha)\left(\sup_{z_1\in P^{1,\tau}} d^2_{z_1}|f(z_1)|+\sup_{z_1,z_2\in  P^{1,\tau}} d^{2+\alpha}_{z_1,z_2}\frac{|f(z_2)-f(z_1)|}{d(z_2,z_1)^{\alpha}}\right) 
\end{equation}
so
\begin{equation}
|\nabla w|_{P^{1,\tau,\lambda}} \leq \frac{C(\alpha,\lambda)}{\sqrt{\tau}}\left(\sup_{z_1\in P^{1,\tau}} d^2_{z_1}|f(z_1)|+\sup_{z_1,z_2\in  P^{1,\tau}} d^{2+\alpha}_{z_1,z_2}\frac{|f(z_2)-f(z_1)|}{d(z_2,z_1)^{\alpha}}\right). 
\end{equation}
Similarly
\begin{equation}\label{second_der_est}
\sqrt{\tau} |\nabla^2w|_{P^{1,\tau,\lambda}} \leq \frac{C(\alpha,\lambda)}{\sqrt{\tau}}\left(\sup_{z_1\in P^{1,\tau}} d^2_{z_1}|f(z_1)|+\sup_{z_1,z_2\in  P^{1,\tau}} d^{2+\alpha}_{z_1,z_2}\frac{|f(z_2)-f(z_1)|}{d(z_2,z_1)^{\alpha}}\right)
\end{equation}
and consequently:
\begin{enumerate}
\item for every $x\in B(p,(1-\sqrt{\lambda})r)$
\begin{equation}
|\nabla v(x,\lambda\tau)| \leq \de+\frac{C(\alpha,\lambda)}{\sqrt{\tau}}\left(\sup_{z_1\in P^{1,\tau}} d^2_{z_1}|f(z_1)|+\sup_{z_1,z_2\in  P^{1,\tau}} d^{2+\alpha}_{z_1,z_2}\frac{|f(z_2)-f(z_1)|}{d(z_2,z_1)^{\alpha}}\right).
\end{equation}
\item  for every $(x,t)\in P^{1,\tau,\lambda}$:
\begin{equation}
|\nabla v|_{P^{1,\tau,\beta,\lambda}} \leq M\de+M\frac{C(\alpha,\lambda)}{\sqrt{\tau}}\left(\sup_{z_1\in P^{1,\tau}} d^2_{z_1}|f(z_1)|+\sup_{z_1,z_2\in  P^{1,\tau}} d^{2+\alpha}_{z_1,z_2}\frac{|f(z_2)-f(z_1)|}{d(z_2,z_1)^{\alpha}}\right).
\end{equation}
\end{enumerate} 
Note that $v$ solves the heat equation and therefore so do $\frac{\partial v}{\partial x_i}$ for every $i=1,\ldots n$. By choosing $\tau_0$ and $\lambda$ sufficiently small, we obtain by Lemma \ref{harmonic_lemma}
\begin{equation}
\begin{aligned}
\sqrt{\tau}|\nabla^2 v(0,\tau)| \leq (1+\delta)\frac{1}{\sqrt{\pi}}\de + \frac{C(\alpha,\lambda)}{\sqrt{\tau}}\left(\sup_{z_1\in P^{1,\tau}} d^2_{z_1}|f(z_1)|+\sup_{z_1,z_2\in  P^{1,\tau}} d^{2+\alpha}_{z_1,z_2}\frac{|f(z_2)-f(z_1)|}{d(z_2,z_1)^{\alpha}}\right).
\end{aligned}
\end{equation}
By \ref{second_der_est} the desired result for $u$ follows. The second estimate is proved similarly, using Lemma \ref{harmonic_lemma2} $\Box$.

\subsection{H\"{o}lder Gradient Estimate}\label{hold_sec}

\noindent The second key ingredient in proving Theorem \ref{main_est} is the following H\"older gradient estimate for our equation.
\begin{theorem}\label{holder_grad}
There exist constants $c_1>0$ and $0<\alpha<1$, depending only on $n$,  such that if $\Omega \subseteq \mathbb{R}^n\times \mathbb{R}_+$ is a bounded domain and $u:\Omega\rightarrow \mathbb{R}$ solves the graphical mean curvature equation
\begin{equation}\label{graphical_MCF_PDE}
u_t=\left(\delta^{ij}-\frac{\partial_iu\partial_ju}{1+|\nabla u|^2}\right)\partial_i\partial_ju
\end{equation}
with $||\nabla u||_{\Omega,0}=\de<1$ then:
\begin{equation}
\sup_{z_1,z_2\in  \Omega} d^{\alpha}_{z_1,z_2}\frac{|\nabla u(z_1)-\nabla u(z_2)|}{d(z_2,z_1)^{\alpha}} \leq c_1\de
\end{equation}
\end{theorem}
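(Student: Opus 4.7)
The plan is to apply the parabolic De Giorgi-Nash-Moser interior H\"{o}lder regularity theorem to each spatial derivative $v_k=\partial_k u$, which satisfies a linear uniformly parabolic equation in divergence form with bounded measurable coefficients. The linear scaling in $\de$ will be automatic from the fact that $\|v_k\|_{L^\infty}\leq\de$.

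First, I exploit the bound $|\nabla u|\leq\de<1$ to secure uniform ellipticity. The coefficient matrix $a^{ij}(p)=\delta^{ij}-p^ip^j/(1+|p|^2)$ of \eqref{graphical_MCF_PDE} has eigenvalues in $[\tfrac{1}{2},1]$ whenever $|p|\leq 1$, so the equation is uniformly parabolic with ellipticity constants depending only on $n$. Using the identity $a^{ij}(p)=\sqrt{1+|p|^2}\,\partial_{p_j}\bigl(p^i/\sqrt{1+|p|^2}\bigr)$, I can rewrite the equation as
\begin{equation*}
\frac{u_t}{\sqrt{1+|\nabla u|^2}}=\mathrm{div}\!\left(\frac{\nabla u}{\sqrt{1+|\nabla u|^2}}\right).
\end{equation*}
Differentiating in $x_k$ shows that $v_k=\partial_k u$ is a weak solution of a linear divergence-form parabolic equation whose principal matrix $B(\nabla u)$, with $B^{ij}(p)=\delta^{ij}/\sqrt{1+|p|^2}-p^ip^j/(1+|p|^2)^{3/2}$, is uniformly elliptic and bounded, with entries close to those of the identity when $\de$ is small.

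Next I will invoke the interior parabolic H\"{o}lder estimate of De Giorgi-Nash-Moser (see e.g.\ Lieberman's \textit{Second Order Parabolic Differential Equations}) applied to each $v_k$. This produces $\alpha\in(0,1)$ and a constant $C>0$, both depending only on $n$, such that on any compactly contained subdomain $K\subset\Omega$ one has
\begin{equation*}
[v_k]_{C^\alpha_{\mathrm{par}}(K)}\leq C\,\|v_k\|_{L^\infty(\Omega)}\leq C\de.
\end{equation*}
The linear dependence on $\de$ is automatic since the De Giorgi-Nash-Moser bound is linear in $\|v_k\|_{L^\infty}$ and $\|\partial_k u\|_{L^\infty}\leq\de$ by hypothesis. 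The distance-to-boundary dependence is absorbed into the $d_{z_1,z_2}^\alpha$ weighting in the theorem statement, and summing over $k$ yields the claimed estimate.

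The main obstacle will be handling the lower-order terms that appear when the divergence-form equation is differentiated: these contain $u_t$ and products of the form $(\nabla u\cdot\nabla v_k)u_t/(1+|\nabla u|^2)$ that are not a priori bounded in $L^\infty$. My plan to absorb them is to first apply Calder\'{o}n-Zygmund to the equation, viewed as $u_t-\Delta u=-\tfrac{\partial_iu\partial_ju}{1+|\nabla u|^2}\partial_i\partial_j u$ with small quadratic nonlinearity, to obtain an interior $L^p$ bound on $\nabla^2 u$; this feeds into a Caccioppoli estimate and allows the standard De Giorgi iteration to be carried out. Crucially, the drift coefficient is of size $O(|\nabla u|)=O(\de)$, so it is perturbatively small and enters only into the constant $C$, not the exponent $\alpha$.
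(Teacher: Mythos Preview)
Your approach has a genuine gap. When you differentiate the equation to obtain a PDE for $v_k=\partial_k u$, the resulting equation is \emph{not} a clean linear divergence-form equation with bounded coefficients: it contains a term quadratic in the Hessian, namely $a^{ijl}(\partial_k\partial_l u)(\partial_i\partial_j u)$ with $a^{ijl}=\partial_{p_l}a^{ij}-\partial_{p_j}a^{il}$ (equivalently, your version carries the factor $u_t$, which is the same thing). You correctly flag this obstruction at the end, but your proposed fix---bootstrap via Calder\'on--Zygmund to get $L^p$ control on $\nabla^2 u$, then feed into a Caccioppoli estimate---does not close. Even granting $\nabla^2 u\in L^p_{\mathrm{loc}}$, the De Giorgi--Nash--Moser estimate for equations with an $L^p$ right-hand side $g$ produces a H\"older constant of the form $C(\|v_k\|_\infty+\|g\|_{L^p})$, and the second contribution is not obviously $O(\de)$ with a constant depending only on $n$. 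Since the linear dependence on $\de$ is the entire point of the theorem, this is not a minor detail.

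The paper follows the classical Ladyzhenskaya--Ural'tseva device (as in Gilbarg--Trudinger \S13.3 and Lieberman Ch.~XII.3): instead of working with $v_k$ alone, one considers the auxiliary functions $w^{\pm}_k=\pm\gamma\,\partial_k u+|\nabla u|^2$. The equation for $|\nabla u|^2$ carries the good-sign term $-2a^{ij}(\partial_r\partial_j u)(\partial_r\partial_i u)$, which exactly absorbs the quadratic-in-Hessian term and yields a clean supersolution inequality for $W=\bar w-w$, to which the weak Harnack inequality applies with constants depending only on the ellipticity (hence only on $n$). The linear dependence on $\de$ then falls out of the specific choice $\gamma=10n\de$, which forces $\osc(w^{\pm}_k)\sim\de\cdot\osc(\partial_k u)$ and so transfers the oscillation decay for $w$ into the desired decay for $\nabla u$. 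This structural trick---using $|\nabla u|^2$ to cancel the bad term---is what is missing from your proposal; without it the argument does not close.
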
      
\proof This follows from tracing how the constants are formed in the interior Holder gradient estimate for quasilinear parabolic equations of general form, noticing that the derivatives of the coefficients, as well as the ellipticity of the equation resulted during the proof, are controlled. (see \cite[XII.3]{Lie} or \cite[13.3]{GT} in the elliptic case). As our situation is simpler than the general one, and for the convenience of the reader, we carry it out here, following \cite[XII.3]{Lie} and \cite[13.3]{GT}. 

\vspace{5 mm}

\noindent Equation \eqref{graphical_MCF_PDE} is of the form
\begin{equation}
u_t=a^{ij}(\nabla u)\partial_i\partial_ju
\end{equation}
with
\begin{equation}\label{graph_coeff}
a^{ij}(p)=\delta^{ij}-\frac{p_ip_j}{1+|p|^2}.
\end{equation} 
Differentiating with respect to $k$ and re-grouping we get
\begin{equation}
\partial_t(\partial_k u)=\partial_i\left(a^{ij}\partial_k\partial_ju\right)+a^{ijl}(\partial_k\partial_lu)(\partial_i\partial_ju)
\end{equation}
for $a^{ijl}=\partial_{p_l}a^{ij}-\partial_{p_j}a^{il}$. Considering the function $v=|\nabla u|^2$ we thus obtain
\begin{equation}
\partial_t v=\partial_i\left(a^{ij}\partial_jv\right)+a^{ijl}(\partial_lv)(\partial_i\partial_ju)-2a^{ij}(\partial_r\partial_ju)(\partial_r\partial_iu)
\end{equation}
so  fixing a parameter $\gamma$ and considering  
\begin{equation}
w=w^{\pm}=w^{\pm}_k= \pm \gamma\partial_k u + v
\end{equation}
we get:
\begin{equation}
-\partial_tw+\partial_i(a^{ij}\partial_jw)=-a^{ijl}(\partial_lw)(\partial_i\partial_ju)+2a^{ij}(\partial_r\partial_ju)(\partial_r\partial_iu).
\end{equation}
Since $|\nabla u| \leq 1$ , the explicit form of the coefficients $a^{ij}(p)$ given in \eqref{graph_coeff} implies that $\lambda |\xi|^2 \leq a^{ij}\xi_i\xi_j$, $|a^{ij}| \leq \Lambda$ and $a^{ijl} \leq c_2$ hold with $\lambda=\frac{1}{2},\Lambda=1$ and $c_2=6$. Thus, by ellipticity and by Schwarz's inequality we get, for some $c_3=c_3(n)$
\begin{equation}
-(\partial_tw)+\partial_i(a^{ij}\partial_jw) \geq -c_3|Dw|^2.
\end{equation} 
 Now, assume $(x_0,t_0)\in \Omega$ and $r>0$ are such that  $P(x_0,t_0,4r) \subseteq \Omega$ and set
\begin{equation}
\bar{w}=\bar{w}^{\pm}_k=\sup_{P(x_0,t_0,4r)}w^{\pm}_k.
\end{equation} 
Writing $W=\bar{w}-w$ we get that, for every non negative $\zeta\in C^{1}_{0}(\Omega)$ 
\begin{equation}
\int -(\partial_tW)\zeta-(a^{ij}\partial_jW)(\partial_i\zeta) \geq \int -c_3|\nabla W|^2\zeta
\end{equation}
so replacing $\zeta$ with $e^{c_4W}\zeta$ for some $c_4=c_4(n)$ we get,using the bounded ellipticity
\begin{equation}
-(\partial_tW)+\partial_i(\bar{a}^{ij}\partial_jW)_i \geq 0
\end{equation} 
for 
\begin{equation}
\bar{a}^{ij}=e^{c_4W}a^{ij}.
\end{equation}
Note that if $\gamma \leq c_5(n)$ a bound on the ellipticity of $\bar{a}^{ij}$ will still be  determined, regardless of $\de$ (as long as $\de<1$). If we are in such a regime, since $W\geq 0$ in $P(x_0,t_0,r)$, by Moser Harnack inequality we get that for some $c_6=c_6(n)$ we have
\begin{equation}\label{mos_har}
r^{-n-2}\int_{ P(x_0,t_0-4r^2,r)}(\bar{w}-w)dx \leq c_6\inf_{P(x_0,t_0,r)}W=c_6\left(\bar{w}-\sup_{P(x_0,t_0,r)}w\right).
\end{equation}
Now, choose $\gamma = 10n\de$ and note that for any sub-domain $\Omega_0 \subseteq \Omega$, choosing $k$  such that $\osc_{\Omega_0}(\partial_ku)\geq \osc_{\Omega_0}(\partial_iu)$ for all $i=1,\ldots,n$ yields 
 \begin{equation} 
8n\de\osc_{\Omega_0}(\partial_ku) \leq \osc_{\Omega_0}(w^{\pm}_k) \leq 12n\de \osc_{\Omega_0}(\partial_ku) 
\end{equation}
which will yield (for $w^{\pm}=w^{\pm}_k$)
\begin{equation}
\begin{aligned}
\inf_{\Omega_0}\left(\bar{w}^+-w^++\bar{w}^--w^-\right)\geq &10n\de(\sup_{\Omega_0} u_k - \inf_{\Omega_0} u_k)+2\inf_{\Omega_0} v-2\sup_{\Omega_0} v \geq \\
&6n\de\osc_{\Omega_0}(u_k) \geq \frac{1}{2}\osc_{\Omega_0}(w^{\pm}).
\end{aligned}
\end{equation}
Thus, there exists some $c_7=c_7(n)$ such that
\begin{equation}
\osc_{P(x_0,t_0,4r)}(w^{\pm}) \leq \frac{c_7}{r^{n+2}} \int_{P(x_0,t_0-4r^2,r)}(\bar{w}^{\pm}-w^{\pm})dx
\end{equation}
holds for \underline{at least one} of $w^{+},w^{-}$. Combining this with \eqref{mos_har}, there exists $c_8=c_8(n)$ such that, assuming the above holds for $w=w^{+}$ and denoting oscillations and sup/inf over the parabolic ball $P(x_0,t_0,\rho)$ by $\osc_\rho,\sup_\rho,\inf_\rho$ we have
\begin{equation}
\osc_{4r}(w) \leq c_8(\sup_{4r}w-\sup_{r}w) \leq c_8(\osc_{4r}w-\osc_{r}(w)). 
\end{equation}

\vspace{5 mm}

\noindent Recapitulating, we see that there exists some $0<\lambda<1$ depending only on $n$ such that if $P(x_0,t_0,4r)\subseteq \Omega$ and $\omega^{\pm}_i(r)=\osc_{P(x_0,t_0,r)}(w^{\pm}_i)$ then:
\begin{enumerate}
\item For every $i\in \{1,\ldots n\}$  we have
\begin{equation}\label{osc_not_inc}
\omega^{\pm}_i(r)\leq \omega^{\pm}_{i}(4r).
\end{equation}
\item If $k$ is chosen such that $\osc_{P(x_0,t_0,4r)}(\partial_ku)\geq \osc_{P(x_0,t_0,4r)}(\partial_iu)$ for $i\in\{1,\ldots,n\}$ then for some sign $\sigma \in \{+,-\}$ we have: 
\begin{equation}\label{osc_dec}
\omega^{\sigma}_k(r)\leq \lambda \omega^{\sigma}_{k}(4r).
\end{equation}
and
\begin{equation}\label{compar_osc}
8n\de\osc_{P(x_0,t_0,4r)}(\partial_ku) \leq \omega^{\sigma}_k(4r) \leq 12n\de \osc_{P(x_0,t_0,4r)}(\partial_ku) 
\end{equation}
\end{enumerate}

\vspace{5 mm}

\noindent After proving the oscillation decay estimate \eqref{osc_dec}, we can easily conclude the proof of the theorem. Let $z_1=(x_1,t_1),z_2=(x_2,t_2)\in \Omega$ and recall that $d_{z_1}=d(z_1,\partial \Omega)$ and $d_{z_1,z_2}=\min(d_{z_1},d_{z_2})$. If $d(z_1,z_2)\geq\frac{1}{4}d_{z_1,z_2}$ the conclusion of the theorem holds trivially, so we may assume that $d(z_1,z_2) < \frac{1}{4}d_{z_1,z_2}$. Assume without loss of generality that $t_1 \geq t_2$, set $R=\frac{1}{4}d_{z_1,z_2}$ and let $m\in \mathbb{N}$ be such that
\begin{equation}
4^{-m-1}R\leq d(z_1,z_2)<4^{-m}R.
\end{equation}
Thus we have a sequence of parabolic balls:
\begin{equation}
z_1,z_2\in P(z_1,4^{-m}R) \subseteq P(z_1,4^{-m+1}R) \subseteq \ldots \subseteq  P(z_1,R) \subseteq \Omega.  
\end{equation}
Setting $(x_0,t_0)=z_1$ so that $\omega^{\pm}_i(r)=\osc_{P(z_0,r)}(w^{\pm}_i)$ we can compare oscillations in $P(z_1,4^{-j+1}R)$ and $P(z_1,4^{-j}R)$ for $j=1,2\ldots, m$ according to \eqref{osc_not_inc},\eqref{osc_dec} and \eqref{compar_osc}. Since at any step one of the $w^{\pm}_i$ decreases by a factor of $\lambda$ and as the are only 2n possibilities for index and sign, we can assume without loss of generality that \eqref{osc_dec},\eqref{compar_osc} happened with $k=1$ and $\sigma=+$ at least $\frac{m}{2n}$ times. This had happened for the last time comparing the oscillations for the parabolic balls  $P(z_1,4^{-j+1}R)$ and $P(z_1,4^{-j}R)$ with $j\geq \frac{m}{2n}$. Thus:
\begin{equation}\label{good_osc}
\begin{aligned}
\osc_{P(z_1,4^{-j+1}R)}(\partial_i u) \leq &\osc_{P(z_1,4^{-j+1}R)}(\partial_1 u) \leq \frac{C}{\de}\omega_1^{+}(4^{-j+1}R) \leq \\ &\frac{C}{\de}\lambda^{m/2n}\omega(R) \leq C\lambda^{m/2n}\de.
\end{aligned}
\end{equation} 
On the other hand $d(z_1,z_2)\geq 4^{-m-1}R$ implies 
\begin{equation}
m \geq \frac{1}{log4}log\left(\frac{R}{4d(z_1,z_2)}\right)
\end{equation}
so with a suitable choice of $0<\alpha<1$ this and \eqref{good_osc} imply:
\begin{equation}
|\nabla u(z_1) - \nabla u(z_2)| \leq C\left(\frac{d(z_1,z_2)}{d_{z_1,z_2}}\right)^{\alpha}\de
\end{equation}
as required $\Box$.

\subsection{Proof of the Main Estimate}\label{conc_sec}

\noindent Before coming back to proving the main estimate, we first derive a crude estimate for the higher derivatives. This estimate ignores the fact that we have control over a thick cylinder and treats it as a union of small parabolic balls.
\begin{lemma}\label{stup_est}
There exists a constant $c$ such that if $\sup_{B(p,r)\times [0,\tau r^2]} |\nabla u| <<1$ then
\begin{equation}
d_z|\nabla^2 u(z)| \leq c
\end{equation}
and
\begin{equation}
d_{z_1,z_2}^{1+\alpha}\frac{|\nabla ^2 u(z_1)-\nabla^2(u(z_2))|}{d(z_1,z_2)^{\alpha}} \leq c
\end{equation}  
\end{lemma}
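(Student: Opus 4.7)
The plan is to prove both bounds by applying the Ecker--Huisken interior curvature estimate (Theorem \ref{EH_curv}) on small parabolic sub-cylinders, combined with the observation that in the small-gradient regime the second fundamental form $A$ of the graph of $u$ is comparable to $\nabla^2 u$. Concretely, in a local orthonormal graphical frame one has $A_{ij} = (\partial_i \partial_j u)/\sqrt{1+|\nabla u|^2}$, so whenever $|\nabla u| \ll 1$ on the whole domain we get $\tfrac{1}{2}|\nabla^2 u| \leq |A| \leq |\nabla^2 u|$; analogous comparisons hold for parabolic derivatives of $A$ versus $\nabla^2 u$ (the extra terms carry small factors of $|\nabla u|$ that are absorbed).

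For the pointwise bound, fix $z = (x_0, t_0)$ in the domain and let $\rho = d_z/4$. By construction, the cylinder $B(x_0, 2\rho) \times [t_0 - (2\rho)^2, t_0]$ is contained in $B(p,r) \times [0,\tau r^2]$, and on it the gradient function $v = \sqrt{1+|\nabla u|^2}$ is bounded by some $v_0 \leq 2$. Applying Theorem \ref{EH_curv} (after the obvious time-translation so that the final slice is $t_0$) gives $|A(z)| \leq C/\rho$, and the comparison above yields $d_z |\nabla^2 u(z)| \leq c$.

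For the H\"older seminorm, the same application of Theorem \ref{EH_curv} on the sub-cylinder around $z$ also yields $|\nabla A(z)| \leq C/\rho^2$ and $|\partial_t A(z)| \leq C/\rho^3$, which translate (using again the small-gradient comparison) into the same bounds on the spatial and time derivatives of $\nabla^2 u$. Given two points $z_1, z_2$, we split into cases according to $d(z_1,z_2)$ versus $d_{z_1,z_2}$: if $d(z_1,z_2) \geq \tfrac{1}{4} d_{z_1,z_2}$ the bound follows trivially from the pointwise estimate already proved; otherwise both points lie in a common parabolic ball of size comparable to $d_{z_1,z_2}$ on which the above derivative bounds hold with the scaling $1/d_{z_1,z_2}^2$ and $1/d_{z_1,z_2}^3$, and integrating along a connecting curve in the parabolic metric gives
\[
|\nabla^2 u(z_1) - \nabla^2 u(z_2)| \leq \frac{C\, d(z_1,z_2)}{d_{z_1,z_2}^{2}} .
\]
Combined with the pointwise bound, this implies the claimed $C^\alpha$-estimate for any $0 < \alpha < 1$.

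Since this is only a crude ``black-box'' estimate used to complete the proofs of the Schauder-type bound in Theorem \ref{Schauder} and the final main estimate, there is no serious obstacle in the argument. The only care needed is in handling the conversion between $A$ and $\nabla^2 u$ at all orders (automatic in the small-gradient regime) and in checking that the parabolic distance $d_z$ correctly captures the interior nature of the estimate, including near the initial time slice $t = 0$ and near the lateral boundary, where the bounds are allowed to blow up in the prescribed way.
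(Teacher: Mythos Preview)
Your proposal is correct and follows essentially the same approach as the paper's own proof: apply the Ecker--Huisken estimate (Theorem~\ref{EH_curv}) on parabolic balls of radius comparable to $d_z$ to get the pointwise bound, then split the H\"older estimate into the trivial case $d(z_1,z_2)\geq \tfrac14 d_{z_1,z_2}$ and the close case, where higher-derivative bounds from the same theorem are integrated along a path. The only cosmetic differences are that you make the $A \leftrightarrow \nabla^2 u$ conversion explicit (the paper tacitly passes straight to $\nabla^3 u$ and $\partial_t\nabla^2 u$), and you collapse the paper's two integration terms $\tfrac{c}{d_{z_1,z_2}^2}d(z_1,z_2)+\tfrac{c}{d_{z_1,z_2}^3}d(z_1,z_2)^2$ into one, which is valid in the close case since $d(z_1,z_2)\leq \tfrac14 d_{z_1,z_2}$.
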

\proof The first estimate is a direct application of the Ecker Huisken curvature estimate \ref{EH_curv} for balls of radius $d_{z_1}$. For the second part, as before, if $d(z_1,z_2)\geq \frac{1}{4}d_{z_1,z_2}$ there is nothing to prove. Otherwise, using Theorem \ref{EH_curv} once more, we obtain
\begin{equation}
d_{z}^2|\nabla^3 u(z)| \leq c,\;\;\;\;\; d_{z}^3|\partial_t\nabla^2 u(z)| \leq c
\end{equation}
so by integrating, first along space and then along time, we get:
\begin{equation}
|\nabla ^2 u(z_1)-\nabla ^2 u(z_2)|\leq \frac{c}{d_{z_1,z_2}^2}d(z_1,z_2)+\frac{c}{d_{z_1,z_2}^3}d(z_1,z_2)^2 \leq \frac{c}{d_{z_1,z_2}^2}d(z_1,z_2)^{\alpha}d_{z_1,z_2}^{1-\alpha} 
\end{equation}
$\Box$.

\proof[Proof of Theorem \ref{main_est}] Assume again that $r=1$. Let $\tau_0(M,\delta)$, $\gamma_0=\gamma_0(\delta)$ and $\lambda_0=\lambda_0(M,\delta)$  be the constant from Theorem \ref{Schauder}. Fix $\lambda<\lambda_0$, $\tau<\tau_0$ and $\gamma<\gamma_0$ such that the assumptions of the theorem are satisfied. The graphical mean curvature equation is of the form
\begin{equation}
u_t-\Delta u =f
\end{equation}
with 
\begin{equation}
f=\frac{(\partial_iu)(\partial_ju)}{1+|\nabla u|^2}\partial_i\partial_ju.
\end{equation}  
Setting $\alpha$ as in Theorem \ref{holder_grad}, by theorem \ref{Schauder} we have:
\begin{equation}
\sqrt{\tau}|\nabla^2 v(0,\tau)| \leq (1+\delta)\frac{1}{\sqrt{\pi}}\de + \frac{C}{\sqrt{\tau}}\left(\sup_{z_1\in P^{1,\tau}} d^2_{z_1}|f(z_1)|+\sup_{z_1,z_2\in  P^{1,\tau}} d^{2+\alpha}_{z_1,z_2}\frac{|f(z_2)-f(z_1)|}{d(z_2,z_1)^{\alpha}}\right).
\end{equation}
By our assumptions and Lemma \ref{stup_est} 
\begin{equation}
d_z^2|f(z)| \leq Cd_z (d_z |\nabla^2u|)|\nabla u|^2 \leq C\sqrt{\tau}(M\de)^2.
\end{equation}
Similarly, by our assumptions, the H\"older gradient estimate of Theorem \ref{holder_grad} and Lemma \ref{stup_est}:
\begin{equation}
\sup_{z_1,z_2\in  P^{1,\tau}} d^{2+\alpha}_{z_1,z_2}\frac{|f(z_2)-f(z_1)|}{d(z_2,z_1)^{\alpha}}\leq C\sqrt{\tau}(M\de)^2.
\end{equation}
Thus, the contribution of the non-linearity is at most quadratic in the gradient (for small gradients) so for  $\de<\de_0$ it will be smaller than $\delta$. This concludes the proof of the first estimate. The proof of the second estimate is similar, utilizing the second estimate of Theorem \ref{Schauder} $\Box$.

\bibliographystyle{alpha}
\bibliography{ReifBib}

\vspace{10mm}
{\sc Or Hershkovits, Courant Institute of Mathematical Sciences, New York University, 251 Mercer Street, New York, NY 10012, USA}\\

\emph{E-mail:}  or.hershkovits@cims.nyu.edu

\end{document}